\newenvironment{gap}
  {\color{red}}%
  {}%
\theoremstyle{plain}
\newtheorem{propn}{Proposition}[section]
\newtheorem{thm}[propn]{Theorem}
\newtheorem{lemma}[propn]{Lemma}
\newtheorem{cor}[propn]{Corollary}
\theoremstyle{definition}
\newtheorem{defn}[propn]{Definition}
\newtheorem{example}[propn]{Example}
\theoremstyle{remark}
\newtheorem*{rem}{Remark}
\newtheorem*{rems}{Remarks}
\newcommand{\tinymaths}[1]{\mbox{\small $#1$}}
\newcommand{\iotaAFock}{\iota_{\Fock}^{\Al}}
\newcommand{\iotaAnoise}{\iota_{\noise}^{\Al}}
\newcommand{\iotaAkhat}{\iota_{\khat}^{\Al}}
\newcommand{\Noise}{\mathsf{N}}
\newcommand{\grad}{\nabla}
\newcommand{\gradhat}{\wh{\grad}}
\newcommand{\Y}[1]{Y^{#1}}
\newcommand{\jF}{j^{F}}
\newcommand{\jFonetwo}{j^{F_1, F_2}}
\newcommand{\CkA}{\mathfrak{qc}}
\newcommand{\CbetakA}{\CkA_\beta}
\newcommand{\CzerokA}{\CkA_0}
\newcommand{\fhat}{\wh{f}}
\newcommand{\ghat}{\wh{g}}
\newcommand{\cD}{\mathcal{D}}
\newcommand{\calDzerozero}{\cD^{0, 0}}
\newcommand{\calDcd}{\cD^{c, d}}
\newcommand{\calDzerozerotwo}{\calDzerozero_2}
\newcommand{\chat}{\wh{c}}
\newcommand{\dhat}{\wh{d}}
\newcommand{\ve}{\varepsilon}
\newcommand{\al}{\alpha}
\newcommand{\vp}{\varpi}
\newcommand{\hil}{\mathsf{h}}
\newcommand{\Hil}{\mathsf{H}}
\newcommand{\Kil}{\mathsf{K}}
\newcommand{\Al}{\mathsf{A}}
\newcommand{\init}{\mathfrak{h}}
\newcommand{\noise}{\mathsf{k}}
\newcommand{\khat}{{\wh{\noise}}}
\newcommand{\Fock}{\mathcal{F}}
\newcommand{\Exps}{\mathcal{E}}
\newcommand{\Step}{\mathbb{S}}
\newcommand{\Real}{\mathbb{R}}
\newcommand{\Rplus}{\Real_+}
\newcommand{\Comp}{\mathbb{C}}
\newcommand{\ip}[2]{\langle #1, #2 \rangle}
\newcommand{\norm}[1]{\lVert #1 \rVert}
\newcommand{\cbnorm}[1]{\norm{#1}_\cb}
\newcommand{\bra}[1]{\langle #1 |}
\newcommand{\ket}[1]{| #1 \rangle}
\newcommand{\cb}{{\text{\tu{cb}}}}
\newcommand{\wh}{\widehat}
\newcommand{\wt}{\widetilde}
\newcommand{\ot}{\otimes}
\newcommand{\otol}{\mathbin{\overline{\ot}}}
\newcommand{\otul}{\mathbin{\underline{\ot}}}
\newcommand{\op}{\oplus}
\newcommand{\les}{\leqslant}
\newcommand{\ges}{\geqslant}
\newcommand{\uwlim}{\mathop{\mathrm{uw\,lim}}}
\newcommand{\comp}{\mathbin{\circ}}
\newcommand{\tu}{\textup}
 \DeclareMathOperator{\uw}{uw}   
\DeclareMathOperator{\Dom}{Dom}
\DeclareMathOperator{\Ran}{Ran}
\DeclareMathOperator{\id}{id}
\DeclareMathOperator{\re}{Re}
\newenvironment{alist}
{

\begin{enumerate}}
{\end{enumerate}}
\newenvironment{rlist}
{

\begin{enumerate}}
{\end{enumerate}}
\numberwithin{equation}{section}
\begin{document}

\title[Quantum Feynman--Kac perturbations]%
{Quantum Feynman--Kac perturbations}
\author[Alexander Belton, Martin Lindsay and Adam Skalski]%
{Alexander C. R. Belton, J. Martin Lindsay and Adam G. Skalski}
\address{Department of Mathematics and Statistics\\Fylde College\\
Lancaster University\\Lancaster LA1 4YF\\
United Kingdom}
\email{a.belton@lancaster.ac.uk}
\email{j.m.lindsay@lancaster.ac.uk}
\address{Institute of Mathematics\\Polish Academy of Sciences\\
ul.~\'{S}niadeckich~8\\00-956 Warszawa\\
Poland}
\email{a.skalski@impan.pl}

\dedicatory{Dedicated to the memory of Bill Arveson}

\subjclass[2000]{Primary 47D08; Secondary 46L53, 81S25}


\keywords{Quantum stochastic cocycle, Markovian cocycle, quantum
stochastic flow, one-parameter semigroup, Feynman--Kac formula,
quantum stochastic analysis}

\begin{abstract}
We develop fully noncommutative Feynman--Kac formulae by employing
quantum stochastic processes. To this end we establish some theory
for perturbing quantum stochastic flows on von~Neumann algebras by
multiplier cocycles.
 Multiplier cocycles are constructed via
 quantum stochastic differential equations
 whose coefficients are driven by the flow.
 The resulting class of cocycles is characterised under alternative
 assumptions of separability or Markov regularity.
 Our results generalise those obtained using classical Brownian
 motion on the one hand, and results for unitarily implemented
 flows on the other.
\end{abstract}

\maketitle

\section*{Introduction}\label{intro}
Feynman--Kac formulae for vector field-type perturbations of a class
of noncommutative elliptic operators were developed in~\cite{LiS}
and extended in~\cite{BP}. In those papers classical Brownian motion
is employed, so the noncommutativity is confined to the operator
algebra which replaces a function space in the classical
Feynman--Kac formula. Moreover the unperturbed semigroup in those
papers is a Gaussian average of a unitarily implemented automorphism
group. In this paper we obtain fully noncommutative Feynman--Kac
formulae by employing \emph{quantum} stochastic processes
(\cite{Partha},\cite{Meyer},\cite{jmlLectures},\cite{SiG}). To this
end we develop the theory of perturbing quantum stochastic flows by
multiplier cocycles, in particular those governed by quantum
stochastic differential equations
 (\textit{cf}.~\cite{EvH},\cite{DaS},\cite{GLW}).
We also show that every sufficiently regular multiplier cocycle is
governed by a quantum stochastic differential equation, whose
coefficients are driven by the free flow;
 in particular, this extends results of~\cite{Bra}
and~\cite{LWjfa}.

Another route to the Feynman--Kac formulae obtained here is sketched
in~\cite{BLS}. This starts from the observation that Bahn and Park's
ideas in~\cite{BP} may be expressed naturally in vacuum-adapted
quantum stochastic calculus ([$\text{B}_{1,2}$]).

Vector field-type perturbations of the Laplacian on $\Real^n$ were
realised with classical Brownian motion by Parthasarathy and Sinha
(\cite{PS0}). Early work on noncommutative Feynman--Kac formulae was
done by Accardi, Frigerio and Lewis
 (\cite{accardi},\cite{AcF},\cite{AFL}) and by
Hudson, Ion and Parthasarathy (\cite{HIP}); see also~\cite{Hud}
and~\cite{F1}. The operator-algebraic structure of the classical
Feynman--Kac formula was elucidated by Arveson (\cite{ArvFK}).

The plan of the paper is as follows. In Section~\ref{section:qsa} we
give the necessary background material from quantum stochastic
analysis, including some recent results, such as Theorem~\ref{XYZ}
and the converse part of Theorem~\ref{amalgam}, to be applied in the
later sections. This section also serves to introduce notation and
terminology. In Section~\ref{section:multipliers} we introduce
adapted multiplier cocycles for a fixed quantum stochastic flow,
here referred to as the \emph{free flow}. These give rise to
completely bounded quantum stochastic cocycles, which we view as
perturbations of the free flow. The relevant existence and
uniqueness results for quantum stochastic differential equations
with time-dependent coefficients are recalled in
Section~\ref{section:qsdes}, where H\"older properties of their
solutions are also discussed. With these, perturbation processes are
constructed in Section~\ref{section:perturb} and their
contractivity, isometry and coisometry properties are characterised
in terms of the driving coefficients of the quantum stochastic
differential equation. In Section~\ref{section:bounded pert} bounded
perturbation processes having locally uniform bounds are shown to be
multiplier cocycles for the free flow.
 In Section~\ref{section:character} the class of
 quasicontractive perturbation processes is characterised,
 assuming that the system and noise dimension spaces are
 separable;
 a further characterisation is obtained
 under the alternative assumption that the free flow is Markov
regular. In Section~\ref{section:pert flows} the perturbation theorem
for quantum stochastic flows is deduced from results of
 Section~\ref{section:bounded pert}, by application of the quantum It\^o
product formula. In the final section we obtain quantum Feynman--Kac
formulae for quantum Markov semigroups via their quantum stochastic
dilation.

\medskip\noindent
\emph{General notation.} All Hilbert spaces here are complex, with
inner products linear in their second argument. We usually
abbreviate the simple tensor $u \ot \xi$ of Hilbert-space vectors $u
\in \hil$ and $\xi \in \Hil$ to $u\xi$. We write $\ket{\hil}$ and
$\bra{\hil}$ respectively for the column and row operataor spaces
$B( \Comp; \hil )$ and $B( \hil; \Comp )$, with $\ket{u}$ and
$\bra{u}$ denoting respectively the operator $\Comp \to \hil, \
\lambda \mapsto \lambda u$ and the linear functional $\hil \to
\Comp, \ v \mapsto \ip{u}{v}$. We also use the notation
\begin{equation}\label{E-notation}
E_u := I_{\hil'} \ot \ket{u} \ot I_\hil \quad \text{and} \quad %
E^u := I_{\hil'} \ot \bra{u} \ot I_\hil,
\end{equation}
where context dictates the choice of Hilbert spaces $\hil'$ and
$\hil$. Algebraic tensor products are denoted $\otul$ and ultraweak
tensor products $\otol$. For a Hilbert space $\hil$, $\iota_\hil$
denotes the ampliation $B(\Hil) \to B(\Hil \ot \hil)$,
 $T \mapsto T \ot I_\hil$ or $B(\Hil) \to B(\hil \ot \Hil)$,
 $T \mapsto I_\hil \ot T$, where the order and Hilbert space $\Hil$
 depend on context, and, for a von Neumann algebra $\Al$,
 $\iota_\hil^\Al$ denotes the induced ampliation
 $\Al \to \Al \otol B( \hil )$, or $\Al \to B( \hil ) \otol \Al$.

For $u$,~$v \in \hil$, $\omega_{u, v}$ denotes both the vector
functional $x \mapsto \ip{u}{x v}$ on $B( \hil )$ and its
restrictions to von~Neumann subalgebras. Finally, for a
vector-valued function $f: \Rplus \to V$ and a subinterval $J$ of
$\Rplus$, the function which agrees with~$f$ on $J$ and is zero on
$\Rplus \setminus J$ is denoted $f_J$, and, for $c \in V$, the
function which equals $c$ on $J$ and the zero vector on $\Rplus
\setminus J$ is denoted $c_J$.

\section{Quantum stochastic analysis}\label{section:qsa}

Fix now, and for the rest of the paper, a von~Neumann algebra $\Al$
acting faithfully on a Hilbert space $\init$ and a further Hilbert
space $\noise$, and set $\khat = \Comp \op \noise$. Under the natural
identification $\khat \ot \init = \init \op ( \noise \ot \init )$,
elements of $B( \khat ) \otol \Al$ take the block-matrix form
$\left[\begin{smallmatrix}
 k & m\\[0.5ex]
 l & n
\end{smallmatrix}\right]$, where $k \in \Al$,
$l \in \ket{\noise} \otol \Al$, $m \in \bra{\noise} \otol \Al$ and
$n\in B( \noise ) \otol \Al$.

In this section we provide the necessary background material in
quantum stochastic (QS) analysis. Further detail may be found
in~\cite{jmlLectures}, but some more recent results are also included
below.

\begin{defn}
A \emph{Markov semigroup} on $\Al$ is a pointwise ultraweakly
continuous semigroup of normal positive unital maps on $\Al$.
\end{defn}

\begin{rem}
If unitality is relaxed to contractivity then the semigroup is called
\emph{sub-Markov}. All the sub-Markov semigroups appearing here will
be completely positive. Completely positive sub-Markov semigroups are
also known as \emph{quantum dynamical semigroups}.
\end{rem}

For a subinterval $J$ of $\Rplus$, let $\Fock_J$ denote the
symmetric Fock space over $L^2( J ; \noise )$ and set $\Noise_J :=
B( \Fock_J)$, shortened to $\Fock$ and $\Noise$ when $J = \Rplus$;
write $I_J$ for the identity operator on $\Fock_J$. Thus, for $0
\les r < t \les \infty$,
\begin{equation}\label{decomp}
\Fock = %
\Fock_{[ 0, r [} \ot \Fock_{[ r, t [} \ot \Fock_{[ t, \infty [}
\quad \text{and} \quad %
\Noise = \Noise_{[ 0, r [} \otol \Noise_{[ r, t [} \otol %
\Noise_{[ t, \infty [}.
\end{equation}
The subspace of $L^2( \Rplus; \noise )$ consisting of step functions
is denoted $\Step$; for elements of $\Step$ we always take their
right-continuous version. We use normalised exponential vectors,
\begin{equation}\label{varpi}
\vp( f ) := e^{-\frac{1}{2} \norm{f}^2} \ve( f ) = %
e^{-\frac{1}{2} \norm{f}^2} %
\bigl( (n!)^{-1/2} f^{\ot n} \bigr)_{n \geq 0} \quad ( f \in \Step ),
\end{equation}
and let $\Exps$ denote their linear span. We refer to the map
\begin{equation}\label{vacuum}
\mathbb{E} := \id_\Al \otol \omega_{\vp( 0 ),\vp( 0 )} : %
\Al \otol \Noise \to \Al, \ T \mapsto E^{\vp( 0 )} T E_{\vp( 0 )}
\end{equation}
as the \emph{vacuum-expectation map}.

In terms of the unitary operator
 $T_r : \Fock \to \Fock_{[ r, \infty [}$ fixed by the requirement
\[
\vp(f) \mapsto \vp( f_r ) \quad ( f \in \Step ),
\]
where $r \in \Rplus$ and $f_r( s ) := f( s - r )$ for all $s \in [
r,\infty [$, the \emph{ampliated CCR flow on} $\Al \otol \Noise$ is
the semigroup of normal unital *-endomorphisms $( \wt{\sigma}_r )_{r
\ges 0}$ determined by the identity
\begin{equation}\label{sigma}
\wt{\sigma}_r( a \ot z ) = a \ot I_{[ 0, r [} \ot T_r z T_r^* %
\quad ( a \in \Al, \, z \in \Noise ).
\end{equation}
We also use the normal *-isomorphisms
 $\sigma_r :
 \Al \otol \Noise \to \Al \otol \Noise_{[ r, \infty [}$
 $(r \ges 0)$ determined by the identity
\[
\sigma_r ( a \ot z ) =  a \ot T_r z T_r^* %
\quad ( a \in \Al, \, z \in \Noise ).
\]

\begin{defn}
 By an \emph{operator process}, or
 \emph{process on} $\init$, we mean a family of
 (possibly unbounded) operators
$X = ( X_t )_{t \ges 0}$ on $\init\ot\Fock$, with common domain
$\init \otul \Exps$, which is adapted, in the sense that
\begin{equation*}
E^{\vp( f )} X_t E_{\vp( g )} = %
\langle \vp( f_{[ t, \infty [} ), \vp( g_{[ t, \infty [} ) \rangle
E^{\vp( f_{[ 0, t [} )} X_t E_{\vp( g_{[ 0, t [} )}
\end{equation*}
($f, g \in \Step, \, t \in \Rplus$), and weakly measurable, in that
\[
t \mapsto \langle \zeta', X_t \zeta \rangle \text{ is measurable } %
\quad ( \zeta' \in \init \ot \Fock, \, \zeta \in \init \otul \Exps).
\]
The process $X$ is \emph{bounded} if each $X_t$ is a bounded
operator; it is a \emph{process in} $\Al$ if the corresponding
matrix elements lie in $\Al$: that is, $E^{\vp( f )} X_t E_{\vp( g
)} \in \Al$ for all $f, g \in \Step$ and $t \in \Rplus$. Finally, we
call the process $X$ \emph{measurable} or \emph{continuous} if, for
all $\zeta \in \init \otul \Exps$, the map $t \mapsto X_t \zeta$ is
strongly measurable or continuous, respectively.
\end{defn}

\begin{rems}
 In the literature it is more usual to allow the common domain for
 an operator process to be of the form $\mathcal{D} \otul \Exps$ where
 $\mathcal{D}$ is a dense subspace of $\init$.
 Here $\init \otul \Exps$ suffices.

When $\init$ and $\noise$ are both separable, measurability is
automatic, by Pettis' Theorem.
\end{rems}

The following notation and terminology are convenient for expressing
the basic estimates and formulae of QS calculus. For an operator
process $X$ we set
\begin{equation}\label{X tilde}
\wt{X}_t := I_{\khat} \otul X_t \quad ( t \in \Rplus ).
\end{equation}
For all $s \in \Rplus$, the operator $\gradhat_s$
 on $\init \ot \Fock$ with domain $\init \otul \Exps$
 is given by linear extension of the prescription
\[
\gradhat_s u \vp( f ) = \fhat( s ) u \vp( f ) %
\quad (u \in \init, \, f \in \Step).
\]
 This is well defined since $f$ is right continuous. The
\emph{quantum It\^o projection} is the operator
\[
\Delta := \begin{bmatrix} 0 & 0\\[1ex] 0 & I_\noise \end{bmatrix}
\in B( \khat ) = \Comp \op B(\noise),
\]
which will appear in ampliated form below, with the same notation.
For \mbox{$f \in \Step$} and any subinterval $J$ of $\Rplus$, the
relevant constants for the estimates~\eqref{FE} and~\eqref{FHE}
below are
\begin{equation}\label{C I f}
C( f ) := 1 + \norm{f} \quad \text{and} \quad %
C( J, f ):= \sqrt{ | J | + C( f_J )^2},
\end{equation}
where $| J |$ is the length of $J$; see~\cite{Lqsi}, Theorem~3.4.

A family of (possibly unbounded) operators $(G_t)_{t\ges 0}$ on
$\khat \ot \init \ot \Fock$ is a \emph{QS-integrable process} if it
satisfies the domain and integrability conditions
\begin{rlist}
\item $\gradhat_t \zeta \in \Dom G_t$ ,
\item $s \mapsto \Delta^\perp G_s \gradhat_s \zeta $ is locally
integrable and
\item $s \mapsto \Delta G_s \gradhat_s \zeta $ is locally
square-integrable,
\end{rlist}
for all $\zeta \in \init \otul \Exps$ and $t \in \Rplus$, and also the
adaptedness condition
 \begin{itemize}
 \item[(iv)]
 for all $f, g \in \Step$ and $t \in \Rplus$,
 \begin{equation}
 E^{\vp(f)} G_t \gradhat_t E_{\vp(g)} = %
 \ip{\vp(f_{[ t, \infty [})}{\vp(g_{[ t, \infty [})} %
  E^{\fhat( t ) \vp(f_{[ 0, t [})}
  G_t E_{\ghat( t ) \vp(g_{[ 0, t [})}.
 \end{equation}
 \end{itemize}

\begin{rems}
These conditions imply that~\eqref{FE} below is finite.

Like the collection of operator processes, the collection of
QS-integrable processes forms a linear space.

The following conditions on an adapted family of operators
$(G_t)_{t\ges 0}$ on $\khat \ot \init \ot \Fock$ are sufficient for
QS integrability:
\begin{alist}
\item $\Dom G_t \supset \khat \otul \init \otul \Exps$ for all
$t \in \Rplus$,
\item $s \mapsto G_s \xi$ is continuous for all
$\xi \in \khat \otul \init \otul \Exps$.
\end{alist}
\end{rems}
In particular, for a locally integrable process $X$ on $\init$, the
adapted family of operators~\eqref{X tilde} is a QS-integrable
process.

Let $X = \bigl(X_0 + \int_0^t G_s \, d \Lambda_s \bigr)_{t \ges 0}$
for a QS-integrable process $G$ and operator $X_0 \in B(\init) \otol
\Noise$. Then $X$ is a continuous process on $\init$ such that, for
all $0 \les r \les t$ and $\zeta \in \init \otul \Exps$, the
\emph{First Fundamental Formula} holds:
\begin{equation}\label{FFF}
\ip{\zeta}{( X_t - X_r ) \zeta} = %
\int_r^t \ip{\gradhat_s \zeta}{G_s \gradhat_s \zeta} \, d s.
\end{equation}
Furthermore, the following hold: the \emph{Second Fundamental Formula},
\begin{equation}\label{SFF}
\norm{X_t \zeta}^2 - \norm{X_r \zeta}^2 = \int_r^t \bigl( 2 \re \ip{\wt{X}_s %
\gradhat_s \zeta}{G_s \gradhat_s \zeta} + %
\norm{\Delta G_s \gradhat_s \zeta}^2 \bigr) \, d s,
\end{equation}
the \emph{Fundamental Estimate},
\begin{align}
& \norm{( X_t - X_r ) u \vp(f)} \notag \\
& \les \int_r^t %
\norm{\Delta^\perp G_s \fhat( s ) u \vp( f )} \, d s
+ C \bigl( f_{[ r, t [} \bigr) \Bigl\{ \int_r^t %
\norm{\Delta G_s \fhat( s ) u \vp( f )}^2 \, d s \Bigr\}^{1 / 2}
\label{FE} \\
& \les C( [ r, t [, f ) \Bigl\{ \int_r^t %
\norm{ G_s \fhat( s ) u \vp( f )}^2 \, d s \Bigr\}^{1 / 2} %
\quad ( u \in \init, \ f \in \Step) \notag
\end{align}
and the \emph{Fundamental H\"older Estimate},
\begin{equation}\label{FHE}
( t - r )^{-1 / 2} \norm{( X_t - X_r ) E_{\vp(f)}} \les %
C( [ r, t [, f ) \sup_{s \in [ r, t [} %
\norm{G_s E_{\fhat( s )} E_{\vp(f)}} \quad ( f \in \Step ).
\end{equation}

Let
$H = \bigl( \wt{X}_t^* G_t + G_t^* \wt{X}_t + %
G_t^* \Delta G_t \bigr)_{t \ges 0}$.
If $\Dom H_t \gradhat_t \supset \init \otul \Exps$ for each
$t \in \Rplus$ (for example, if $X$ and $G$ are both bounded) then the
identity~\eqref{SFF} may be re-expressed as follows:
\begin{equation}\label{SFF'}
\norm{X_t \zeta}^2 - \norm{X_r \zeta}^2 = %
\int_r^t \ip{\gradhat_s\zeta}{H_s \gradhat_s \zeta} \, d s.
\end{equation}
Thus if $H$ is a QS-integrable process then~\eqref{SFF'}
and~\eqref{FFF} combine to yield the \emph{Quantum It\^o Product Formula},
\begin{equation}\label{QIPF}
 X^*_t X_t = X^*_r X_r + \int_r^t H_s \, d\Lambda_s.
\end{equation}

The formulae~\eqref{FFF}, \eqref{SFF}, \eqref{SFF'} and \eqref{QIPF}
may all be polarised; moreover, if $G^*$ is QS integrable then
\[
  X^*_t \supset X^*_r + \int_r^t G_s^* \, d\Lambda_s.
\]

We shall need the following extension of the First Fundamental
Formula; the boundedness assumptions hold in cases of interest.

\begin{lemma}\label{FFF extension}
Let $G$ be a QS-integrable process, let $f$,~$g \in \Step$ and $0
\les r < t$, and suppose that
\begin{alist}
\item
$E^{\vp( f )} \bigl( \int_r^t G_s \, d \Lambda_s \bigr) E_{\vp( g
)}$ is bounded\tu{;}
\item
$E^{\fhat( s )\vp( f )} G_s E_{\ghat( s )\vp( g )}$ is bounded,
 uniformly for $s \in [ r, t [$.
\end{alist}
Then,
 for all $\omega \in B( \init )_*$,
\[
\omega\Bigl(
 E^{\vp( f )} \int_r^t G_s \, d \Lambda_s \ E_{\vp( g )}
\Bigr) = \int_r^t
\omega\bigl(
 E^{\fhat( s )\vp( f )} G_s E_{\ghat( s )\vp( g )}
\bigr) \, d s.
\]
\end{lemma}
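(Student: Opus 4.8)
The plan is to recognise the identity as the polarised First Fundamental Formula~\eqref{FFF} specialised to exponential vectors, and then to promote it from vector functionals to arbitrary normal functionals by a Fubini argument; hypotheses~(a) and~(b) will supply precisely the boundedness this requires.

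I would first write $X_t - X_r$ for the increment $\int_r^t G_s \, d \Lambda_s$, set $T := E^{\vp( f )}( X_t - X_r ) E_{\vp( g )}$, and, for $s \in [ r, t [$, set $Y_s := E^{\fhat( s )\vp( f )} G_s E_{\ghat( s )\vp( g )}$. Since $E_{\vp( g )} u = u \vp( g ) \in \init \otul \Exps$ and $E_{\ghat( s )\vp( g )} u = \gradhat_s( u \vp( g ) ) \in \Dom G_s$ (as $G$ is QS-integrable), both $T$ and $Y_s$ are everywhere defined on $\init$, and
\[
\ip{v}{T u} = \ip{v \vp( f )}{( X_t - X_r ) u \vp( g )}, \qquad \ip{v}{Y_s u} = \ip{\gradhat_s v \vp( f )}{G_s \gradhat_s u \vp( g )} \qquad ( u, v \in \init ),
\]
using $\gradhat_s u \vp( g ) = \ghat( s ) u \vp( g )$ for the second identity. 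Hypotheses~(a) and~(b) assert precisely that $T \in B( \init )$ and that $M := \sup_{s \in [ r, t [} \norm{Y_s} < \infty$. Polarising~\eqref{FFF} and inserting $\zeta' = v \vp( f )$, $\zeta = u \vp( g )$ then gives
\[
\ip{v}{T u} = \int_r^t \ip{v}{Y_s u} \, d s \qquad ( u, v \in \init ),
\]
which is the assertion for the vector functionals $\omega = \omega_{v, u}$; note also that $s \mapsto \ip{v}{Y_s u}$ is measurable, being the integrand of~\eqref{FFF} (polarised).

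To reach a general $\omega \in B( \init )_*$ I would write it as a norm-convergent sum $\omega = \sum_{n \ges 1} \omega_{v_n, u_n}$ with $\sum_n \norm{v_n} \, \norm{u_n} < \infty$, obtained from the singular-value decomposition of the trace-class operator representing $\omega$. Since $T$ is bounded, $\omega( T ) = \sum_n \ip{v_n}{T u_n} = \sum_n \int_r^t \ip{v_n}{Y_s u_n} \, d s$ by the vector-functional case. The bound $| \ip{v_n}{Y_s u_n} | \les M \norm{v_n} \, \norm{u_n}$ supplied by~(b) then gives $\sum_n \int_r^t | \ip{v_n}{Y_s u_n} | \, d s \les ( t - r ) M \sum_n \norm{v_n} \, \norm{u_n} < \infty$, so Fubini's theorem applies:
\[
\omega( T ) = \int_r^t \sum_n \ip{v_n}{Y_s u_n} \, d s = \int_r^t \omega( Y_s ) \, d s,
\]
the final equality using boundedness of $Y_s$, and the integrand on the right being measurable as the uniform limit of its measurable partial sums. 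This is the stated formula.

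I expect no genuine obstacle here: the substance of the lemma is just that hypotheses~(a) and~(b) are exactly what allow $\omega$ to be passed through the quantum stochastic integral. The single point requiring care is the interchange of summation and integration, which is why~(b) is imposed as a \emph{uniform} bound on $\norm{Y_s}$ rather than as mere boundedness of each $Y_s$ — this is what renders the dominating series above summable.
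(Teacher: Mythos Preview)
Your proof is correct and follows essentially the same route as the paper's: establish the identity for vector functionals via the polarised First Fundamental Formula, then pass to general $\omega \in B(\init)_*$ using the uniform bound from hypothesis~(b). The only cosmetic difference is that the paper phrases the passage to the limit as ``Lebesgue's Dominated Convergence Theorem and the norm totality of vector functionals in $B(\init)_*$'', whereas you use the explicit trace-class series representation together with Fubini; these are equivalent ways of organising the same limiting argument.
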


\begin{proof}
When $\omega$ is a vector functional $\omega_{u, v}$ the identity is
equivalent to the First Fundamental Formula~\eqref{FFF}. For a general
normal linear functional the result follows from Lebesgue's Dominated
Convergence Theorem and the norm totality of vector functionals in
$B( \init )_*$.
\end{proof}

\begin{defn}
A \emph{mapping process}, or
 \emph{process on $\Al$}, is
a family $k = ( k_t )_{t \ges 0}$ of linear maps with common domain
$\Al$, such that $\bigl( k_t( a ) \bigr)_{t \ges 0}$ is a process in
$\Al$ for all $a \in \Al$.

A process $k$ on $\Al$ is \emph{bounded}, \emph{completely bounded},
\emph{completely positive} or \emph{normal} if $k_t$ has that
property for each $t \in \Rplus$, and is \emph{continuous} if the
operator process $\bigl( k_t( a ) \bigr)_{t\ges 0}$ is continuous
for each $a \in \Al$. We call a bounded process~$k$ on $\Al$
\emph{ultraweakly continuous} if it is pointwise ultraweakly
continuous: $t \mapsto k_t( a )$ is ultraweakly continuous for each
$a \in \Al$.
\end{defn}

For QS differential equations and QS cocycles, to be defined next,
various forms of ampliation are needed.

Let $k$ be a normal completely bounded process on $\Al$. Then
adaptedness implies that, for all $r \ges 0$ and all $a \in \Al$,
\[
k_r( a ) = k_{r)}( a ) \ot I_{[ r, \infty [},
\]
where $k_{r)}( a )$ is an operator in $\Al \otol \Noise_{ [ 0, r
[}$. Each map
\[
k_{r)} : \Al \to \Al \otol \Noise_{[ 0, r [}
\]
is normal and completely bounded, so we may define
\[
\wh{k}_r := k_{r)} \otol \id_{\Noise_{[ r, \infty [}} : %
\Al \otol \Noise_{[ r, \infty [} \to %
\Al \otol \Noise_{[ 0, r [} \otol \Noise_{[ r, \infty [} = %
\Al \otol \Noise.
\]
Also define a normal completely bounded process $\wt{k}$ on
$B( \khat ) \otol \Al$ by setting
\[
\wt{k}_t := \id_{B( \khat )} \otol \, k_t \quad ( t \in \Rplus ).
\]

\begin{defn}
A normal completely bounded process $k$ on $\Al$ is a \emph{QS
cocycle} if
\begin{equation}\label{cocycle relations}
k_0 = \iotaAFock \quad \text{and} \quad %
k_{r + t} = \wh{k}_r \comp \sigma_r \comp k_t \quad %
( r, t \in \Rplus );
\end{equation}
it is a \emph{QS flow} if furthermore $k$ is ultraweakly continuous,
*-homomorphic and unital.
\end{defn}

 More generally, a process $k$ on $\Al$ for which the maps
\begin{equation}
\label{kfg}
\kappa_t^{f,g} := %
E^{\vp(f_{[ 0, t [})} k_t( \cdot ) E_{\vp(g_{[ 0, t [})}
\quad ( f, g \in \Step, \, t \in \Rplus )
\end{equation}
are bounded is a QS cocycle if it satisfies the \emph{weak cocycle
relations}
\begin{equation}\label{weak cocycle}
\kappa_0^{f,g} = \id_\Al \quad \mbox{and} \quad %
\kappa_{r + t}^{f, g} = %
\kappa_r^{f, g} \comp \kappa_t^{S^*_r f, S^*_r g} \quad %
( f, g \in \Step, \, r, t \in \Rplus ),
\end{equation}
where $S_r$ is the right-shift isometry on $L^2( \Rplus; \noise )$
so that
\[
( S_r f )( u ) = 1_{[ r, \infty [}( u ) f( u - r ) \quad %
( u \in \Rplus ).
\]
The weak cocycle relation is equivalent to~\eqref{cocycle relations}
when~$k$ is completely bounded and normal.

Such processes arise naturally; see Theorem~\ref{amalgam}.

\begin{lemma}[{\textit{Cf.}~\cite{LWjfa}, Proposition~4.3}]
\label{k to K} Let $k$ be a normal completely bounded process on
$\Al$ and consider the family $K := \bigl( \wh{k}_t \comp \sigma_t
\bigr)_{t\ges 0}$ of normal completely bounded maps on $\Al \otol
\Noise$.
\begin{alist}
\item For each $t\in\Rplus$,
$\norm{K_t}_\cb = \norm{k_t}_\cb$. Moreover, $K$ is unital or
*-homomorphic if and only if $k$ has the same property.
\item The following are equivalent\tu{:}
\begin{rlist}
\item $k$ is a QS cocycle\tu{;}
\item $K$ is a semigroup.
\end{rlist}
\end{alist}
\end{lemma}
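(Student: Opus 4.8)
The plan is to observe that, for each fixed~$t$, the map $K_t = \wh{k}_t \comp \sigma_t$ is assembled from three transparent ingredients: the normal $*$-isomorphism $\sigma_t$ of $\Al \otol \Noise$ onto $\Al \otol \Noise_{[t,\infty[}$; the ampliation $\phi \mapsto \phi \otol \id_{\Noise_{[t,\infty[}}$ of normal completely bounded maps; and the ``appending of an identity'' $\psi \mapsto (\,\cdot \ot I_{[t,\infty[}) \comp \psi$, which by adaptedness relates $k_{t)}$ to $k_t$ through $k_t = (\,\cdot \ot I_{[t,\infty[}) \comp k_{t)}$. For~\textup{(a)}: $\sigma_t$ is a normal $*$-isomorphism onto $\Al \otol \Noise_{[t,\infty[}$, hence completely isometric in both directions, unital and $*$-homomorphic; $a \mapsto a \ot I_{[t,\infty[}$ is a normal unital injective $*$-homomorphism; and ampliating a normal completely bounded map by the identity of a von~Neumann algebra preserves the completely bounded norm and preserves and reflects both unitality and the $*$-homomorphism property (the reflections coming from restriction to the corner $\Al \ot \Comp I$). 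Composing these observations gives $\cbnorm{K_t} = \cbnorm{\wh{k}_t} = \cbnorm{k_{t)}} = \cbnorm{k_t}$ and shows that each of ``unital'' and ``$*$-homomorphic'' passes between $K_t$ and $k_t$ in both directions; concretely, $K_t(I) = k_{t)}(I) \ot I_{[t,\infty[}$, which equals $I$ precisely when $k_t(I) = I$.

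For~\textup{(b)}, first note that $\sigma_0 = \id$ (since $T_0 = I_\Fock$) and $\Noise_{[0,0[} = \Comp$, so $K_0 = \wh{k}_0 = k_{0)} \otol \id_\Noise$ with $k_{0)} \colon \Al \to \Al$, whence $K_0 = \id_{\Al \otol \Noise}$ if and only if $k_{0)} = \id_\Al$ if and only if $k_0 = \iotaAFock$. The crux is then the identity, valid for \emph{every} normal completely bounded process $k$ and all $r, t \ges 0$,
\begin{equation}\label{key}
K_r \comp K_t = \wh{L} \comp \sigma_{r+t}, \qquad \text{where} \quad L := \wh{k}_r \comp \sigma_r \comp k_t ,
\end{equation}
$\wh{L}$ denoting the ampliation of $L$ associated with adaptedness up to time $r+t$; here $L$ \emph{is} adapted to $[0, r+t[$, because $k_t$ is adapted to $[0,t[$, $\sigma_r$ shifts that to $[r, r+t[$, and $\wh{k}_r$ inserts noise only in $[0,r[$. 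Granting~\eqref{key}, the assignment $M \mapsto \wh{M} \comp \sigma_{r+t}$, from maps $\Al \to \Al \otol \Noise$ adapted to $[0, r+t[$ to maps on $\Al \otol \Noise$, is injective, since $(\wh{M} \comp \sigma_{r+t})(a \ot I_\Fock) = \wh{M}(a \ot I_{[r+t,\infty[}) = M(a)$ recovers~$M$; applying this with $M = k_{r+t}$, which yields $K_{r+t}$ by definition, and with $M = L$, which yields $K_r \comp K_t$ by~\eqref{key}, we obtain that $K_{r+t} = K_r \comp K_t$ for all $r, t$ holds if and only if $k_{r+t} = \wh{k}_r \comp \sigma_r \comp k_t$ for all $r, t$. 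With the $K_0$/$k_0$ equivalence this is exactly $\mathrm{(i)} \Leftrightarrow \mathrm{(ii)}$.

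It remains to prove~\eqref{key}, which is the main technical point. Expanding, $K_r \comp K_t = \wh{k}_r \comp \sigma_r \comp \wh{k}_t \comp \sigma_t$, and I would chain three elementary intertwining relations, each verified first on product operators $a \ot \ket{\vp(f)} \bra{\vp(g)}$ ($a \in \Al$, $f, g \in \Step$), which span an ultraweakly dense subspace, and then extended by normality: first, $\sigma_r \comp \sigma_t = \sigma_{r+t}$, reflecting the semigroup law $S_r S_t = S_{r+t}$ of the shift on $L^2(\Rplus; \noise)$; second, $\sigma_r \comp \wh{k}_t = \wh{(\sigma_r \comp k_t)} \comp \sigma_r$ as maps out of $\Al \otol \Noise_{[t,\infty[}$, the outer ampliation on the right being taken over the base interval $[r, \infty[$ at time $r+t$ (one checks this on $a \ot w$ by writing $k_{t)}(a)$ out explicitly in $\Al \otol \Noise_{[0,t[}$); and third, $\wh{k}_r \comp \wh{(\sigma_r \comp k_t)} = \wh{(\wh{k}_r \comp \sigma_r \comp k_t)} = \wh{L}$, a composition-of-ampliations identity over the decomposition $\Noise = \Noise_{[0,r[} \otol \Noise_{[r,r+t[} \otol \Noise_{[r+t,\infty[}$. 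Chaining these gives $K_r \comp K_t = \wh{k}_r \comp \wh{(\sigma_r \comp k_t)} \comp \sigma_r \comp \sigma_t = \wh{L} \comp \sigma_{r+t}$. The main obstacle is not conceptual but organisational: one must track precisely which Hilbert-space factors and shifts act on which summands, and confirm that $\sigma_\bullet$ is genuinely compatible with the tensor decompositions~\eqref{decomp} — that is, really is the ampliated CCR flow — so that the three intertwinings hold on the nose.
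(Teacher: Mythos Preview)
Your proposal is correct. For part~(a) you are essentially doing what the paper does, only more verbosely: the paper compresses your chain of observations into the single identity $k_t = K_t \comp \iotaAFock$, which together with the complete isometry of $\iotaAFock$ and $\sigma_t$ immediately yields the cb-norm equality and the transfer of unitality and the $*$-homomorphism property.

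For part~(b) the routes genuinely differ. The paper does not establish your universal identity~\eqref{key}. Instead, for (i)$\Rightarrow$(ii) it isolates a module property,
\[
K_s(AX) = K_s(A)\,\wt{\sigma}_s(X) \qquad (A \in \Al \otol \Noise,\ X \in 1_\Al \ot \Noise),
\]
verified on simple tensors and extended by normality; combined with $K_t(a \ot z) = k_t(a)\,\wt{\sigma}_t(1_\Al \ot z)$ this gives $(K_s \comp K_t)(a \ot z) = k_{s+t}(a)\,\wt{\sigma}_{s+t}(1_\Al \ot z) = K_{s+t}(a \ot z)$ directly. The converse (ii)$\Rightarrow$(i) is then a one-liner from $k_t = K_t \comp \iotaAFock$. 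Your approach instead proves an identity $K_r \comp K_t = \wh{L} \comp \sigma_{r+t}$ valid for \emph{any} process $k$, and then reads off both implications at once via injectivity of $M \mapsto \wh{M} \comp \sigma_{r+t}$. This is more symmetric and makes explicit that the correspondence $k \leftrightarrow K$ is a bijection intertwining the two composition laws, at the cost of the bookkeeping you acknowledge in tracking the various ampliations and shifts. The paper's argument is shorter because the module property absorbs that bookkeeping into a single reusable identity.
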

\begin{proof}
(a) Let $t\in \Rplus$. Since $\iotaAFock$ and $\sigma_t$ are both
unital and *-homomorphic, and thus completely isometric, the claim
follows from the definition of $K_t$ and the identity
\begin{equation}\label{k = K iota}
k_t = K_t \circ \iotaAFock.
\end{equation}
(b) Let $s,t\in \Rplus$. For a simple tensor $A = a \ot z$, the
identity
\[
K_s\bigl( A X \bigr) = K_s( A ) \wt{\sigma}_s( X ) \quad %
( A \in \Al \otol \Noise, \, X \in 1_{\Al} \ot \Noise )
\]
holds, since both sides equal $k_s( a ) \wt{\sigma}_s( Z X )$, where
$Z = 1_{\Al} \ot z$; it therefore holds in general, by linearity and
normality. In particular, if (i) holds then, for $a\in\Al$ and $z
\in \Noise$,
\begin{align*}
( K_s \circ K_t )( a \ot z ) & = %
K_s\bigl( k_t( a ) \wt{\sigma}_t( 1_\Al \ot z ) \bigr) \\
& = K_s\bigl( k_t( a ) \bigr) \wt{\sigma}_{s + t}( 1_\Al \ot z ) \\
& = k_{s + t}( a ) \wt{\sigma}_{s + t}( 1_\Al \ot z ) = %
K_{s + t}( a \ot z ),
\end{align*}
so (ii) holds by linearity and normality. The converse follows
from~\eqref{k = K iota}.
\end{proof}

\begin{rem}
When $k$ is *-homomorphic and ultraweakly continuous we refer to~$K$
as the \emph{corresponding $E$-semigroup on $\Al \otol \Noise$} or,
if $k$ is also unital, and thus a QS flow, the \emph{corresponding
$E_0$-semigroup on $\Al \otol \Noise$}
 (\textit{cf.}~\cite{Arveson}).
\end{rem}

\begin{defn}
If $k$ is a normal completely bounded QS cocycle on $\Al$ then, for
each $c$,~$d\in\noise$, setting
\begin{equation}\label{assoc}
\mathcal{Q}^{c,d}_t( a ) := %
E^{\vp(c_{[ 0, t [})} k_t( a ) E_{\vp(d_{[ 0, t [})}
\quad ( a \in \Al, \, t \in \Rplus )
\end{equation}
defines a normal completely bounded semigroup $\mathcal{Q}^{c,d}$
on~$\Al$;
 $\mathcal{Q}^{0,0} = \bigl( \mathbb{E} \comp k_t \bigr)_{t\ges 0}$
is called the (\emph{vacuum}) \emph{expectation semigroup} of $k$.
The cocycle is called \emph{Markov regular} if each of these
\emph{associated semigroups} is norm continuous.

\end{defn}
\begin{rem}
For a completely bounded cocycle whose cb norm is locally uniformly
bounded, Markov regularity is equivalent to norm continuity of the
expectation semigroup.
\end{rem}

The following result is an amalgamation of the basic existence
theorem for QS differential equations with bounded constant
coefficients with a recent characterisation theorem for QS cocycles.
 In order to state it we introduce some
more notation. For a mapping process~$k$ on~$\Al$ such that $\Dom
k_t( a^* )^* \supset \init \otul \Exps$ for all $a \in \Al$ and $t
\in \Rplus$, setting
\[
k^\dagger_t( a ) := k_t( a^* )^* \bigr|_{\init \otul \Exps} %
\quad ( a \in \Al, \, t \in \Rplus )
\]
defines a mapping process $k^\dagger$ on $\Al$. In this case we say
that $k$ is \emph{adjointable}. The dagger notation is also used as
follows: for Hilbert spaces $\hil$ and $\hil'$, and a map $\psi :
\Al \to B( \hil; \hil' )$, $\psi^\dagger$ is the map
\[
 \Al \to B( \hil'; \hil ), \ a \mapsto \psi( a^* )^*.
\]
Also, for all $\ve \in \Exps$ set
$\iota^{\Al}_{\Fock, \ve} := \iotaAFock( \cdot ) E_\ve$ and
$k_{t, \ve} := k_t( \cdot ) E_\ve$.

\begin{thm}[\cite{LWexistence},\cite{L}]\label{amalgam}
Let $\phi \in CB( \Al; B(\khat) \otol \Al )$. The QS
differential equation
\begin{equation}\label{qsde}
  k_0 = \iotaAFock, \quad %
d k_t = \wt{k}_t \comp \phi \, d \Lambda_t
\end{equation}
has a unique weakly regular, weak solution $k^\phi$. Moreover,
$k^\phi$ is a QS cocycle, it is adjointable with
$( k^\phi )^\dagger = k^{\phi^\dagger}$, and it satisfies
\begin{equation}\label{Holder}
\limsup_{t\to 0^+}
 t^{-1/2}
 \bigl\{
   \norm{k_{t, \ve} - \iota^{\Al}_{\Fock, \ve}}_\cb +
   \norm{k^\dagger_{t, \ve} - \iota^{\Al}_{\Fock, \ve}}_\cb
 \bigr\}
< \infty \quad ( \ve \in \Exps ).
\end{equation}
In particular, $k^\phi$ is continuous and therefore a strong solution
of \eqref{qsde}.

In the converse direction, if $k$ is a completely bounded QS cocycle
on $\Al$ with locally bounded cb norm that satisfies~\eqref{Holder}
then $k = k^\phi$ for a unique map $\phi \in CB( \Al; B( \khat )
\otol \Al )$.
\end{thm}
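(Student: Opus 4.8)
The plan is to treat the two implications separately: the forward one by a Picard iteration controlled by the Fundamental Estimate~\eqref{FE}, the converse by reconstructing $\phi$ from the generators of the associated semigroups of $k$.

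\emph{Forward implication.}
First I would set $k^{(0)}_t:=\iotaAFock$ and, recursively, $k^{(n+1)}_t(a):=\iotaAFock(a)+\int_0^t\wt{k}^{(n)}_s\bigl(\phi(a)\bigr)\,d\Lambda_s$, each coefficient process $\wt{k}^{(n)}\comp\phi$ being QS integrable since the iterates are continuous. Applying the completely bounded form of~\eqref{FE} to the increment $k^{(n+1)}_t-k^{(n)}_t=\int_0^t\bigl(\wt{k}^{(n)}_s-\wt{k}^{(n-1)}_s\bigr)\comp\phi\,d\Lambda_s$ bounds its relevant seminorm on $[0,t[$ by $C\bigl\{\int_0^t\bigl(\text{the same seminorm of }k^{(n)}_s-k^{(n-1)}_s\bigr)^2\,ds\bigr\}^{1/2}$, where $C$, locally bounded in $t$, absorbs $\norm{\phi}_\cb$; iterating yields an $n$-fold time-ordered integral of volume $t^n/n!$, so these increments are $O\bigl((Ct)^{n/2}/\sqrt{n!}\bigr)$, locally uniformly. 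Hence $(k^{(n)})_n$ converges, uniformly for $t$ in compact sets, to a weakly regular weak solution $k^\phi$ of~\eqref{qsde}; being a uniform limit of the continuous processes $k^{(n)}$, $k^\phi$ is continuous, whence it is in fact a strong solution. Uniqueness follows from the same estimate via a Gr\"onwall argument applied to the difference of two weakly regular weak solutions, and the H\"older bound~\eqref{Holder} for $k^\phi$ (and, symmetrically, for $(k^\phi)^\dagger$) is read off from the Fundamental H\"older Estimate~\eqref{FHE}, since $C([0,t[,f)\to1$ and $\sup_{s\in[0,t[}\norm{\wt{k}^\phi_s(\phi(a))E_{\fhat(s)}E_{\vp(f)}}$ remains bounded as $t\to0^+$ by continuity of $k^\phi$ at the origin.

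\emph{Cocycle property and adjointability.}
Since $k^\phi_0=\iotaAFock$ by construction, to see that $k^\phi$ is a QS cocycle it remains to establish the identity $k^\phi_{r+t}=\wh{k}^\phi_r\comp\sigma_r\comp k^\phi_t$. Fixing $r$, the time-homogeneity and shift-covariance of quantum It\^o integration show that both sides, as processes in $t$, satisfy, on $\Al\otol\Noise$, the differential equation $dY_t=\wt{Y}_t\comp\phi\,d\Lambda_t$ with the common initial value $k^\phi_r$; the uniqueness argument above applies equally here and forces the two to coincide. For adjointability, note that $\phi^\dagger\in CB(\Al;B(\khat)\otol\Al)$; since $G^*$ is QS integrable whenever $G$ is --- as holds here, $k^\phi$ being continuous with locally bounded cb norm --- taking adjoints on $\init\otul\Exps$ in the integrated form of~\eqref{qsde} satisfied by $k^\phi$ shows both that $\init\otul\Exps\subset\Dom\bigl(k^\phi_t(a^*)^*\bigr)$ and that $(k^\phi)^\dagger$ is a weakly regular weak solution of~\eqref{qsde} with $\phi$ replaced by $\phi^\dagger$; uniqueness then gives $(k^\phi)^\dagger=k^{\phi^\dagger}$, in particular $k^\phi$ is adjointable.

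\emph{Converse implication.}
Let $k$ be a completely bounded QS cocycle with locally bounded cb norm satisfying~\eqref{Holder}. I would first establish that each associated semigroup $\mathcal{Q}^{c,d}$ is norm continuous: combining $\norm{k_t}_\cb\les M$ locally, the elementary bound $\norm{\vp(f)-\vp(0)}=O(\norm{f})$, and~\eqref{Holder} yields $\norm{\mathcal{Q}^{c,d}_t-\id_\Al}=O(\sqrt{t})\to0$, so $\mathcal{Q}^{c,d}$ has a normal completely bounded generator $\tau^{c,d}$. The crux is then to show that, for each $a$, the assignment $(c,d)\mapsto\tau^{c,d}(a)$ --- after subtracting the scalar term $\bigl(\ip{c}{d}-\tfrac12\norm{c}^2-\tfrac12\norm{d}^2\bigr)a$ coming from the normalisation of exponential vectors --- extends to a function of $\chat$ and $\dhat$ that is linear in $\dhat$ and conjugate linear in $\chat$, a fact to be teased out of the weak cocycle relation together with the regularity control~\eqref{Holder}. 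Granting this, and recalling that for a cocycle governed by a quantum stochastic differential equation the First Fundamental Formula identifies the generator of $\mathcal{Q}^{c,d}$ with $a\mapsto E^{\chat}\phi(a)E_{\dhat}$ up to that scalar correction, there is a unique $\phi\in CB(\Al;B(\khat)\otol\Al)$ for which $k^\phi$ (formed via the forward implication) has the same associated-semigroup generators $\tau^{c,d}$ as $k$. Since both $k$ and $k^\phi$ are completely bounded cocycles with locally bounded cb norm whose associated semigroups are all norm continuous with the same generators, and since iterating the weak cocycle relation over the constancy intervals of step functions expresses such a cocycle in terms of the family of its associated semigroups, $k=k^\phi$. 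Uniqueness of $\phi$ is then clear, $\phi$ being recovered from $k$ through the $\tau^{c,d}$.

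\emph{Main obstacle.}
The delicate step is precisely the linearity of $(\chat,\dhat)\mapsto\tau^{c,d}$ in the converse: the quantum It\^o structure that would render $\phi$ manifestly well defined is exactly what is being reconstructed, so it must be extracted from the cocycle and adaptedness relations together with the regularity hypothesis~\eqref{Holder}; by contrast the forward implication is the standard Picard--Gr\"onwall machinery built on the fundamental formulae and estimates recalled in this section.
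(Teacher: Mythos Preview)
The paper does not prove this theorem: it is stated with attribution to the references~\cite{LWexistence} and~\cite{L} and is followed directly by remarks and the next result, so there is no in-paper proof against which to compare your attempt.

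That said, your outline tracks the architecture of the cited results fairly well. The forward implication via Picard iteration driven by the Fundamental Estimate, with uniqueness by Gr\"onwall and the H\"older bound from~\eqref{FHE}, is indeed the standard route. One point to tighten: the H\"older condition~\eqref{Holder} is stated in cb norm, so reading it off from~\eqref{FHE} requires ampliating by $\id_{B(\hil)}$ and checking that the bound is uniform in $\hil$; this works because $\phi$ is completely bounded, but you should say so explicitly rather than invoke~\eqref{FHE} directly. In the converse, your derivation of norm continuity of $\mathcal{Q}^{c,d}$ from~\eqref{Holder} is correct once you note that, by adaptedness, evaluation against $\vp(c_{[0,1[})$ and $\vp(d_{[0,1[})$ recovers $\mathcal{Q}^{c,d}_t$ up to a scalar factor tending to~$1$, so the H\"older hypothesis for the fixed vectors $\vp(c_{[0,1[})$, $\vp(d_{[0,1[})$ suffices. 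You have also correctly identified the genuine obstacle: the sesquilinearity of $(\chat,\dhat)\mapsto\tau^{c,d}(a)+\chi(c,d)a$ is not formal and must be extracted from the cocycle relation together with the H\"older control; this is the substance of the characterisation result in~\cite{L}.
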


\begin{rems}
The process $k = k^\phi$ given by Theorem~\ref{amalgam} need not be
bounded. \emph{Weak regularity} means that $\kappa^{f, g}_t$ (given
by~\eqref{kfg}) is bounded with norm locally bounded in~$t$, for all
$f$,~$g \in \Step$; in particular, the terms appearing in
\eqref{weak cocycle} are well defined. To be a \emph{weak solution}
means that
\[
 \langle \zeta, ( k_t - k_0 )( a ) \zeta \rangle = \int_0^t
 \langle \gradhat_s \zeta,
  \wt{k}_s\bigl( \phi( a ) \bigr) \gradhat_s \zeta
 \rangle
\, d s
\ \ %
( \zeta \in \init \otul \Exps, \, a \in \Al, \, t \in \Rplus ),
\]
and to be a \emph{strong solution} means furthermore that $\bigl(
\wt{k}_t( \phi( a ) ) \bigr)_{t\ges 0}$ is QS integrable for all $a
\in \Al$. If $k$ is completely bounded with locally bounded cb~norm,
then $k$ is ultraweakly continuous; in this case, the stochastic
generator~$\phi$ is normal if and only if the cocycle $k$ is normal.

The condition~\eqref{Holder} is stronger than Markov regularity in
general, but they are equivalent when~$k$ is a completely positive
QS cocycle which is \emph{quasicontractive}, that is, there exists
$\beta \in \Real$ such that $\| e^{- \beta t} k_t \| \les 1$ for all
$t \in \Rplus$.
\end{rems}

Necessary and sufficient conditions on the stochastic
generator~$\phi$ for the cocycle~$k^\phi$ to be *-homomorphic are
given next, in terms of a \emph{Lindbladian}
\[
\mathcal{L}_{l, \pi, h} : \Al \to \Al, \ %
a \mapsto l^* \pi( a ) l - \mbox{$\frac12$} ( l^* l a + a l^* l )
+ i ( a h - h a )
\]
and an inner $\pi$-derivation
\[
\delta_{l, \pi} : \Al \to \ket{\Hil} \otol \Al, \ %
a \mapsto \pi( a ) l - l a,
\]
where $l \in \ket{\Hil} \otol \Al$, $h = h^* \in \Al$ and
$\pi : \Al \to B( \Hil \ot \init )$ is a representation of~$\Al$.

\begin{thm}\label{theorem 0}
Let $j = k^\theta$, where $\theta\in CB( \Al; B(\khat) \otol \Al )$.
The following are equivalent\tu{:}
\begin{rlist}
\item
$j$ is normal and *-homomorphic\tu{;}
\item
$\theta = \bigl[\begin{smallmatrix}
 \mathcal{L} & \delta^\dagger \\[0.5ex]
 \delta & \pi - \iota
\end{smallmatrix}\bigr]$,
 where $\iota := \iotaAnoise$,
 $\pi$ is a normal *-homomorphism, $\delta = \delta_{l, \pi}$
and $\mathcal{L} = \mathcal{L}_{l, \pi, h}$ for some $h = h^*\in\Al$
and $l\in \ket{\noise} \otol \Al$.
\end{rlist}
In this case $j$ is ultraweakly continuous; moreover, $j$ is unital
if and only if $\pi$ is unital.
\end{thm}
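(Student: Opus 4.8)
The plan is to translate the two properties of the process $j$ into algebraic properties of its stochastic generator $\theta$, exploiting the uniqueness assertion of Theorem~\ref{amalgam} together with the quantum It\^o product formula (in the shape of the First and Second Fundamental Formulae). The algebraic condition that does all the work is the \emph{structure relation}
\[
\theta( ab ) = \theta( a )\, \iotaAkhat( b ) + \iotaAkhat( a )\, \theta( b ) + \theta( a )\, \Delta\, \theta( b ) \qquad ( a, b \in \Al ),
\]
an identity in $B( \khat ) \otol \Al$ which is the infinitesimal form of multiplicativity of $j$; here the products are the matrix products arising from the block decomposition of $B( \khat ) \otol \Al$, and $\iotaAkhat( a ) = \bigl[ \begin{smallmatrix} a & 0 \\ 0 & \iotaAnoise( a ) \end{smallmatrix} \bigr]$.

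\emph{(ii) $\Rightarrow$ (i).} Assume $\theta$ has the stated block form. Each of $\mathcal{L}_{l, \pi, h}$, $\delta_{l, \pi}$, $\delta_{l, \pi}^\dagger$ and $\pi - \iotaAnoise$ is normal, so $\theta$ is normal, and hence $j = k^\theta$ is normal by the remarks following Theorem~\ref{amalgam}. A direct computation --- using $h = h^*$ and the fact that $\pi$ and $\iotaAnoise$ are *-homomorphic --- shows that $\theta^\dagger = \theta$; since $( k^\theta )^\dagger = k^{\theta^\dagger}$ this gives $j_t( a^* )^* = j_t( a )$ on $\init \otul \Exps$, i.e.\ $j$ is *-preserving. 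Expanded block by block, the structure relation is equivalent to the four identities $\pi( ab ) = \pi( a ) \pi( b )$, $\ \delta_{l, \pi}( ab ) = \delta_{l, \pi}( a )\, b + \pi( a )\, \delta_{l, \pi}( b )$, the adjoint of the latter for $\delta_{l, \pi}^\dagger$, and $\mathcal{L}_{l, \pi, h}( ab ) = \mathcal{L}_{l, \pi, h}( a )\, b + a\, \mathcal{L}_{l, \pi, h}( b ) + \delta_{l, \pi}^\dagger( a )\, \delta_{l, \pi}( b )$, each of which follows at once from the defining formulae for $\mathcal{L}_{l, \pi, h}$ and $\delta_{l, \pi}$. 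The structure relation is precisely what the quantum It\^o product formula needs in order to yield $j_t( a ) j_t( b ) = j_t( ab )$ for all $t$, given the equality at $t = 0$; so $j$, being also *-preserving, is *-homomorphic.

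\emph{(i) $\Rightarrow$ (ii).} Assume $j$ is normal and *-homomorphic. Normality of $j$ forces normality of $\theta$. From $j_t( a^* )^* = j_t( a )$ we get $j^\dagger = j$, hence $\theta^\dagger = \theta$ by uniqueness of the stochastic generator in Theorem~\ref{amalgam}; writing the blocks of $\theta$ as
\[
\theta = \begin{bmatrix} \psi & \chi \\ \delta & \nu \end{bmatrix}, \qquad \psi : \Al \to \Al, \quad \delta : \Al \to \ket{\noise} \otol \Al, \quad \nu : \Al \to B( \noise ) \otol \Al,
\]
this says $\psi^\dagger = \psi$, $\nu^\dagger = \nu$ and $\chi = \delta^\dagger$. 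Multiplicativity of $j$ yields, via the quantum It\^o product formula and uniqueness of the generator, the structure relation above. Its $( 2, 2 )$ block shows that $\pi := \nu + \iotaAnoise$ is a normal, multiplicative, *-preserving map, i.e.\ a normal representation of $\Al$ on $\noise \ot \init$. Its $( 2, 1 )$ block shows that $\delta$ is a bounded normal $\pi$-derivation into $\ket{\noise} \otol \Al$, hence inner --- $\delta = \delta_{l, \pi}$ for some $l \in \ket{\noise} \otol \Al$ --- since bounded normal derivations of a von~Neumann algebra into a normal dual bimodule are inner, and then $\chi = \delta^\dagger = \delta_{l, \pi}^\dagger$. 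Feeding $\pi$ and $l$ into the $( 1, 1 )$ block shows that
\[
a \longmapsto \psi( a ) - l^* \pi( a ) l + \tfrac12\bigl( l^* l\, a + a\, l^* l \bigr)
\]
is a bounded normal $\dagger$-invariant derivation of $\Al$, hence of the form $a \mapsto i( ah - ha )$ for some $h = h^* \in \Al$ (innerness pins down the implementing element modulo the centre, where a self-adjoint representative is available). Thus $\psi = \mathcal{L}_{l, \pi, h}$, which is (ii).

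\emph{The remaining assertions, and the main obstacle.} When (i) holds each $j_t$ is *-homomorphic, hence completely contractive; so $j$ is completely bounded with locally bounded cb~norm, and therefore ultraweakly continuous by the remarks after Theorem~\ref{amalgam}. For the unitality clause, a computation gives $\theta( 1 ) = \wh{l}^*\bigl( \pi( 1 ) - I \bigr)\wh{l}$ with $\wh{l} := \bigl[ \begin{smallmatrix} l \\ 1 \end{smallmatrix} \bigr] \in \ket{\khat} \otol \Al$: if $\pi$ is unital this vanishes, so $j_t( 1 ) = \iotaAFock( 1 ) = I$ for all $t$ and $j$ is unital; conversely, if $j$ is unital then $j_t( 1 ) = I$ for all $t$, so the First Fundamental Formula and continuity of $j$ force $\theta( 1 ) = 0$, whose $( 2, 2 )$ block reads $\pi( 1 ) = \iotaAnoise( 1 ) = I$. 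I expect the principal difficulty to be twofold. First, one must make the quantum It\^o product formula rigorous when $j = k^\theta$ is a priori only weakly regular rather than bounded: this forces one to work through the bounded matrix-element maps $\kappa^{f, g}_t$ --- which satisfy genuine linear integral equations driven by $\theta$ --- and to verify the domain and integrability hypotheses under which the Fundamental Formulae and the product formula apply, closing up the resulting equation for the \emph{defect of multiplicativity} by a uniqueness argument. Second, one must invoke the cohomological fact that every bounded normal derivation of a von~Neumann algebra into a normal dual bimodule is inner, with due care when the representation $\pi$ is non-unital and its support projection $\pi( 1 )$ must be accounted for.
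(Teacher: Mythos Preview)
Your proposal is correct and follows essentially the same route as the paper: the quantum It\^o product formula yields the structure relation, whose block components identify $\pi$ as a normal *-homomorphism, $\delta$ as a $\pi$-derivation, and $\mathcal{L}$ as satisfying the cocycle identity, after which innerness of the relevant derivations gives the explicit form. The paper phrases the innerness step as two applications of the Christensen--Evans characterisation of $\rho$-derivations $\gamma$ with $\gamma(a)^*\gamma(a)\in\Al$ (rather than invoking general vanishing of $H^1$ into dual normal bimodules), and for (ii) $\Rightarrow$ (i) simply cites \cite{GS} and \cite{LWhomomorphic} instead of sketching the argument as you do; your identification of the technical obstacle --- making the product formula rigorous when $k^\theta$ is a priori only weakly regular --- is exactly what those references address.
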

\begin{proof}
Suppose that (i) holds. The quantum It\^o product
formula~\eqref{QIPF} implies that $\theta$ has the above
block-matrix form, where $\pi$ is a *-homomorphism, $\delta$ is a
$\pi$-derivation and
$\mathcal{L}( a^* b ) - \mathcal{L}( a )^* b - a^* \mathcal{L}( b ) = %
\delta( a )^* \delta( b )$ for all $a \in \Al$. Since $\Ran \delta
\subset \ket{\noise} \otol \Al$, (ii) follows by applying twice the
Christensen--Evans characterisation
 (\cite{ChE}, Theorem~2.1) of a $\rho$-derivation
 $\gamma: \Al \to B( \init; \Kil )$ satisfying
 $\gamma( a )^*\gamma( a ) \in \Al$ for all $a \in \Al$, where
 $\rho : \Al \to B(\Kil )$ is a representation;
 see~\cite{jmlLectures}, Theorem~6.9.

For the reverse implication see Theorem~3.3.6 of~\cite{GS} and
Corollary~4.2 of~\cite{LWhomomorphic}.
\end{proof}

\begin{rem}
 In Theorem 6.8 of~\cite{jmlLectures}
 the hypothesis
 $\gamma( \Al )^* \gamma( \Al ) \subset \Al$ is missing.
\end{rem}

We next describe the situation for QS cocycles which are not
necessarily Markov regular. To any ultraweakly continuous normal
completely positive QS cocycle $k$ on $\Al$ we may associate the maps
 \[
\phi_t : \Al \to B( \khat ) \otol \Al, \ a \mapsto %
\begin{bmatrix}
 t^{-1/2} E^{\vp( 0 )} \\[1ex] V_t^*
\end{bmatrix}
\bigl( k_t( a ) - \iotaAFock( a ) \bigr)
\begin{bmatrix}
 t^{-1 / 2} E_{\vp( 0 )} & V_t
\end{bmatrix}
\]
where $t > 0$ and $V_t$ denotes the isometry
 $\noise \ot \init \to \init \ot \Fock$
 determined by
\begin{equation}\label{V isom}
 c \ot u
 \mapsto u \ot ( 0, t^{-1 / 2} c_{[ 0, t [}, 0, 0, \cdots )
 \qquad
 (u\in\init, c \in \noise),
\end{equation}
and also the operator $\phi$ from $\Al$ to $B(\khat) \otol \Al$,
given by
\begin{equation}\label{phi = uw}
 \phi(x) = \uwlim_{t \to 0^+} \phi_t( x ),
\end{equation}
with domain equal to the set of elements $x \in \Al$ for which the
ultraweak limit exists. Thus, letting $\calDcd$ denote the domain of
the ultraweak generator $\tau_{c,d}$ of the $( c, d )$-associated
semigroup $\mathcal{Q}^{c,d}$ of $k$, $\Dom \phi$ is a *-invariant
subspace of $\Al$ contained in $\calDzerozero$. Set
\begin{equation}\label{D00two}
\calDzerozerotwo :=
\bigl\{ x \in \calDzerozero :
 x^* x, \, x x^* \in \calDzerozero \bigr\}.
\end{equation}

\begin{thm}[\cite{L}]\label{XYZ}
Let $k$ be an ultraweakly continuous normal completely positive QS
cocycle on $\Al$ which is quasicontractive.
\begin{alist}
\item The cocycle $k$ strongly satisfies the following QS
differential equation on $\Dom \phi$,
 where $\phi$ is as above\tu{:}
\begin{equation}\label{QSDE}
k_0 = \iotaAFock, \quad d k_t = \widetilde{k}_t \comp \phi \, d
\Lambda_t.
\end{equation}
\item
 The following relations hold\tu{:}
\begin{align*}
\calDzerozerotwo \subset \Dom \phi & = \bigl\{
 x \in \Al : \limsup_{t \to 0^+} \norm{\phi_t( x )} < \infty
\bigr\} \\
 &
\subset \big\{
 x \in \Al: \liminf_{t\to 0^+} \norm{\phi_t( x )} < \infty
\big\} \subset \bigcap_{c,d\in\noise} \calDcd,
\end{align*}
with the latter two inclusions being equalities when
 $\dim \noise < \infty$.
\item For all $x \in\Dom \phi$,
\begin{equation}
\label{1.24}
 \tau_{c,d}( x ) = E^{\chat} \phi( x ) E_{\dhat} - \chi(
c, d ) x,
\end{equation}
where
$\chi( c, d ) :=
\frac12 \bigl( \norm{c}^2 + \norm{d}^2 \bigr) - \ip{c}{d}$.
\item If $\Dom \phi$ is an ultraweak core for $\tau_{0,0}$
then~\eqref{QSDE} has no other ultraweakly continuous normal
 weak solution which is quasicontractive.
\end{alist}
\end{thm}

 We refer to $\phi$ as the \emph{ultraweak stochastic derivative}
 of~$k$.
\begin{rems}
With the aid of the quantum martingale representation theorem
 (\cite{PS2}), Accardi and Mohari effectively obtained part (a) of
Theorem~\ref{XYZ}, on the domain~$\calDzerozerotwo$,
 assuming that $\init$ and $\noise$ are separable (\cite{AcM}).
There are cases of interest for which the latter domain is
\emph{not} ultraweakly dense (\cite{F2},\cite{ArvDomain}).
\end{rems}

 Weakening the complete positivity assumption, we have the following
 modified version of Theorem~\ref{XYZ}, in which
 the ultraweak stochastic derivative is still defined by~\eqref{phi = uw}
 and the domains $\calDcd$ are defined as before too.

 \begin{thm}[\cite{L}]\label{1.11}
 Let $k$ be an ultraweakly continuous normal completely quasicontractive
  QS cocycle on $\Al$ for which both
  $k$ and $k^\dagger$ are pointwise strong operator continuous, i.e.
  \[
 s \mapsto k_s(a) \xi \text{ and } s \mapsto k_s(a^*)^* \xi
 \text{ are continuous}
 \qquad (a \in \Al, \xi \in \init \ot \Fock).
  \]
  Then
  \begin{alist}
  \item
  $k$ strongly satisfies~\eqref{QSDE} on $\Dom \phi$\tu{;}
  \item
 $\Dom \phi \subset \bigcap_{c,d\in\noise} \calDcd$,
 with equality if $\dim \noise < \infty$,
 and~\eqref{1.24} holds.
  \end{alist}
 \end{thm}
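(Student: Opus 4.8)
\emph{Strategy.} The plan is to run the proof of Theorem~\ref{XYZ} from~\cite{L}, with the complete positivity hypothesis replaced throughout by the combination of complete quasicontractivity and the strong continuity of $k$ and $k^\dagger$. Complete quasicontractivity furnishes $\beta \in \Real$ with $\norm{k_t}_\cb \les e^{\beta t}$ for all $t \ges 0$; in particular $k$ is a bounded process with locally bounded cb norm, so (as recorded after Theorem~\ref{amalgam}) $k$ is ultraweakly continuous, the ampliations $\wt k_t$ and $\wh k_t$ and the associated semigroups $\mathcal{Q}^{c,d}$ of~\eqref{assoc} are well behaved, and the matrix elements of $k_t( x )$ are bounded locally uniformly in $t$. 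In the completely positive setting the Schwarz inequality for $k_t$ was used to control the ``off-diagonal'' contributions in the differentiated cocycle; here that role is taken over by the strong continuity hypotheses, which force the candidate generator process to satisfy the sufficient conditions for quantum stochastic integrability recorded in Section~\ref{section:qsa}.

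\emph{Part~(a).} Fix $x \in \Dom \phi$ and set $G := \bigl( \wt k_s( \phi( x ) ) \bigr)_{s \ges 0}$. Since $\phi( x ) \in B( \khat ) \otol \Al$ and each $k_s$ is bounded, every $G_s$ is a bounded operator on $\khat \ot \init \ot \Fock$, so $\Dom G_s = \khat \ot \init \ot \Fock \supset \khat \otul \init \otul \Exps$; and, writing $\phi( x )$ in the block form of Section~\ref{section:qsa}, every block of $G_s = ( \id_{B( \khat )} \otol k_s )( \phi( x ) )$ is an ampliation of $k_s$, so the locally uniform cb bound together with pointwise strong operator continuity of $k$ makes $s \mapsto G_s \xi$ continuous for $\xi \in \khat \otul \init \otul \Exps$ (the uniform bound being what licenses passing $k_s$ through the sums over an orthonormal basis of $\noise$ implicit in those ampliations when $\noise$ is infinite-dimensional). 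Hence $G$ meets the sufficient conditions~(a),~(b) for QS integrability, and it only remains to establish the weak solution identity
\[
\ip{\zeta}{( k_t - k_r )( x ) \zeta} = \int_r^t \ip{\gradhat_s \zeta}{\wt k_s( \phi( x ) ) \gradhat_s \zeta} \, d s \qquad ( \zeta \in \init \otul \Exps, \ 0 \les r \les t ) ;
\]
$k$ will then be a \emph{strong} solution of~\eqref{QSDE} on $\Dom \phi$. For this I would first take $r = 0$ and let $t \to 0^+$: unpacking the normalisations $t^{\pm 1 / 2}$ and the isometries $V_t$ of~\eqref{V isom} in the definition~\eqref{phi = uw} of $\phi$, and relating $t^{-1 / 2} v \vp( d_{[ 0, t [} )$ to $t^{-1 / 2} v \vp( 0 ) + V_t( d \ot v )$ modulo a norm-$O( t^{1 / 2} )$ error, the ultraweak convergence $\phi_t( x ) \to \phi( x )$ delivers the displayed identity at $r = 0$ modulo $o( t )$, the error being dominated using strong continuity of $k$ and $k^\dagger$ and the Fundamental Estimate~\eqref{FE}. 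The cocycle relation, in the form $k_t( x ) - k_r( x ) = \wh k_r\bigl( \sigma_r( k_{t - r}( x ) - \iotaAFock( x ) ) \bigr)$ provided by~\eqref{cocycle relations} --- equivalently, the semigroup law for $K = ( \wh k_t \comp \sigma_t )_{t \ges 0}$ of Lemma~\ref{k to K} --- together with the continuity of $K$ then propagates the origin estimate to all $0 \les r \les t$, giving the exact identity.

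\emph{Part~(b).} For the inclusion $\Dom \phi \subset \bigcap_{c, d \in \noise} \calDcd$, fix $c$,~$d \in \noise$. By part~(a), $\int_0^t G_s \, d \Lambda_s = k_t( x ) - \iotaAFock( x )$, which compresses between $\vp( c_{[ 0, t [} )$ and $\vp( d_{[ 0, t [} )$ to the bounded operator $\mathcal{Q}^{c, d}_t( x ) - e^{- t \chi( c, d )} x$, while $s \mapsto E^{\chat \vp( c_{[ 0, t [} )} \wt k_s( \phi( x ) ) E_{\dhat \vp( d_{[ 0, t [} )}$ is bounded uniformly for $s \in [ 0, t [$ by the locally bounded cb norm; so Lemma~\ref{FFF extension} yields a weak integral equation for $t \mapsto \mathcal{Q}^{c, d}_t( x )$ which, with the semigroup property of $\mathcal{Q}^{c, d}$, shows $x \in \Dom \tau_{c, d} = \calDcd$ and $\tau_{c, d}( x ) = E^{\chat} \phi( x ) E_{\dhat} - \chi( c, d ) x$, as in Theorem~\ref{XYZ}(c). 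When $\dim \noise < \infty$ the reverse inclusion holds too: taking $x \in \bigcap_{c, d \in \noise} \calDcd$ and a finite orthonormal basis $( e_j )$ of $\noise$, each matrix element $\ip{(1,c) \ot u}{\phi_t( x ) \bigl( (1,d) \ot v \bigr)}$ with $c$,~$d \in \{ 0 \} \cup \{ e_j \}$ equals $\ip{u}{\bigl( \mathcal{Q}^{c, d}_t( x ) - e^{- t \chi( c, d )} x \bigr) v}$ modulo $O( t^{1 / 2} )$ and hence converges as $t \to 0^+$ (because $x \in \calDcd$); by Banach--Steinhaus these matrix elements are then bounded locally uniformly in $t$, so the finitely many block entries of $\phi_t( x )$ converge ultraweakly, i.e.\ $x \in \Dom \phi$.

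\emph{The main obstacle.} The crux is to pass from the purely ultraweak differentiability at the origin built into the definition of $\Dom \phi$ to the strong, all-times form of the quantum stochastic differential equation without the Schwarz inequality that complete positivity would provide; this is exactly what the strong continuity hypotheses on $k$ and $k^\dagger$ are for, since they make $\bigl( \wt k_s( \phi( x ) ) \bigr)_{s \ges 0}$ a QS-integrable process and bring the Fundamental Estimate into play, while complete quasicontractivity supplies the locally uniform domination needed for Lemma~\ref{FFF extension} and for the dominated-convergence and Banach--Steinhaus steps above. A subsidiary technical point, relevant only when $\noise$ is infinite-dimensional, is to justify that $\id_{B( \khat )} \otol k_s$ applied to the block matrix $\phi( x )$ yields a strongly continuous family; the uniform cb bound handles this.
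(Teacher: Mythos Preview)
The paper does not contain a proof of Theorem~\ref{1.11}: the result is quoted from the external reference~\cite{L} (a paper listed as \emph{in preparation}), and no argument is supplied here. So there is no ``paper's own proof'' against which your proposal can be compared.

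That said, your outline follows the natural strategy --- adapting the proof of Theorem~\ref{XYZ} by replacing the Schwarz-inequality estimates available under complete positivity with the strong continuity of $k$ and $k^\dagger$ --- and the structure is sound. A few points deserve tightening. First, your justification that $s \mapsto \wt{k}_s( \phi( x ) ) \xi$ is continuous when $\noise$ is infinite-dimensional is somewhat cursory: the map $\id_{B(\khat)} \otol k_s$ is normal and completely bounded, and one should argue via slice maps (or by approximating $\phi(x)$ in norm by finite sums of elementary tensors and using the locally uniform cb bound) rather than invoking an orthonormal-basis sum. Second, the heart of Part~(a) --- deriving the weak solution identity at the origin from the ultraweak convergence $\phi_t(x) \to \phi(x)$ and then propagating via the cocycle relation --- is sketched rather than carried out; in particular, the step ``the continuity of $K$ then propagates the origin estimate to all $0 \les r \les t$, giving the exact identity'' conceals a genuine differentiation-and-integration argument (one typically shows that $t \mapsto \ip{\zeta}{k_t(x)\zeta}$ is differentiable with the expected derivative, using the cocycle relation to reduce to the increment at the origin). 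Third, in Part~(b) your claim that each matrix element of $\phi_t(x)$ equals $\ip{u}{(\mathcal{Q}^{c,d}_t(x) - e^{-t\chi(c,d)} x) v}$ modulo $O(t^{1/2})$ needs the strong continuity hypotheses to control the error terms, and this should be made explicit.

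None of these are fatal; they are the places where a full proof (as in~\cite{L}) would need detail beyond what you have written.
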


\section{Multiplier cocycles for a QS flow}
\label{section:multipliers}

 \emph{Fix now, and for the rest of the paper}, a QS flow $j$ on $\Al$,
which we refer to as the \emph{free flow} (i.e. unperturbed flow),
and let $J$ denote the corresponding $E_0$-semigroup, according to
Lemma~\ref{k to K}. Thus $J$ is the ultraweakly continuous normal
unital *-homomorphic semigroup on $\Al \otol \Noise$ given by $(
\wh{\jmath}_t \comp \sigma_t )_{t \ges 0}$.

\begin{lemma}\label{uw to SOT}
 For all~$A \in B(\khat)\otol \Al$,
 the map $t \mapsto \wt{\jmath}_t(A)$ is strongly continuous
 $\Rplus \to B(\khat) \otol \Al \otol \Noise$.
 For all~$t\in\Rplus$,
 the map $T \mapsto J_t(T)$ on $\Al \otol \Noise$
 is strongly continuous on bounded sets.
 \end{lemma}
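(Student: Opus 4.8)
The plan is to establish the two assertions separately. For the first, the key point is that, since the free flow $j$ is $\ast$-homomorphic and unital, $\wt{\jmath}_t = \id_{B(\khat)}\otol j_t$ is for each $t$ a normal unital $\ast$-homomorphism of $B(\khat)\otol\Al$ into $B(\khat)\otol\Al\otol\Noise$, and in particular is contractive. I would first transfer the ultraweak continuity of $j$ to $\wt{\jmath}$ by slicing off the $B(\khat)$-component: a check on simple tensors, extended by linearity and normality, gives $E^{u}\,\wt{\jmath}_t(A)\,E_{v} = j_t\bigl(E^{u}AE_{v}\bigr)$ for $u$,~$v\in\khat$ and $A\in B(\khat)\otol\Al$, so that $\omega_{u\ot w,\,v\ot w'}\bigl(\wt{\jmath}_t(A)\bigr) = \omega_{w,w'}\bigl(j_t(E^{u}AE_{v})\bigr)$ for $w$,~$w'\in\init\ot\Fock$; since $E^{u}AE_{v}\in\Al$ and $j$ is ultraweakly continuous, the right-hand side is continuous in~$t$. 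As functionals of the form $\omega_{u\ot w,\,v\ot w'}$ are norm-total in the predual of $B(\khat\ot\init\ot\Fock)$ and $\sup_t\norm{\wt{\jmath}_t(A)}\les\norm{A}$, a standard equicontinuity argument then yields ultraweak continuity of $t\mapsto\wt{\jmath}_t(A)$ for every $A\in B(\khat)\otol\Al$.

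To upgrade this to strong continuity I would use the $\ast$-homomorphism identity
\[
\norm{(\wt{\jmath}_t(A)-\wt{\jmath}_s(A))\zeta}^2 = \ip{\zeta}{\bigl(\wt{\jmath}_t(A^*A) - \wt{\jmath}_t(A)^*\wt{\jmath}_s(A) - \wt{\jmath}_s(A)^*\wt{\jmath}_t(A) + \wt{\jmath}_s(A^*A)\bigr)\zeta}
\]
and let $s\to t$: each of the four terms converges ultraweakly to $\wt{\jmath}_t(A^*A)=\wt{\jmath}_t(A)^*\wt{\jmath}_t(A)$ — for the two cross terms one combines the uniform bound $\norm{\wt{\jmath}_s(A)}\les\norm{A}$ with the separate ultraweak continuity of multiplication on bounded sets — so the right-hand side tends to $0$. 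This is the first assertion.

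For the second assertion, each $J_t=\wh{\jmath}_t\comp\sigma_t$ is, by construction (see Lemma~\ref{k to K}), a normal unital $\ast$-homomorphism of $\Al\otol\Noise=\Al\otol B(\Fock)$, and it suffices to recall that any normal $\ast$-homomorphism $\Phi$ of a von~Neumann algebra is strongly continuous on bounded sets. Indeed, if $T_\alpha\to T$ strongly with $\sup_\alpha\norm{T_\alpha}<\infty$, then polarising $\ip{\zeta}{(T_\alpha-T)^*(T_\alpha-T)\zeta} = \norm{(T_\alpha-T)\zeta}^2\to0$ shows $(T_\alpha-T)^*(T_\alpha-T)\to0$ in the weak operator topology, hence ultraweakly since that net is bounded, hence $\Phi\bigl((T_\alpha-T)^*(T_\alpha-T)\bigr)\to0$ ultraweakly by normality; as $\Phi$ is $\ast$-homomorphic this gives $\norm{(\Phi(T_\alpha)-\Phi(T))\zeta}^2 = \ip{\zeta}{\Phi\bigl((T_\alpha-T)^*(T_\alpha-T)\bigr)\zeta}\to0$.

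The one step that requires a little care is the first, namely carrying ultraweak continuity along the ampliation by the possibly non-separable algebra $B(\khat)$; slicing reduces this to the given ultraweak continuity of $j$, and the remainder is a routine application of the standard von~Neumann algebra toolbox, consistent with the lemma being ``easily verified''.
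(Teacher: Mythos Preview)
Your argument is correct. The paper does not actually prove this lemma --- it simply introduces it as ``two easily verified properties of the free flow'' --- so there is no proof to compare against; your write-up is a clean and standard way to supply the details. One minor remark: in your upgrade from ultraweak to strong continuity, the cross terms $\ip{\zeta}{\wt{\jmath}_t(A)^*\wt{\jmath}_s(A)\zeta}=\ip{\wt{\jmath}_t(A)\zeta}{\wt{\jmath}_s(A)\zeta}$ converge already by weak operator continuity of $s\mapsto\wt{\jmath}_s(A)$, so the appeal to separate ultraweak continuity of multiplication is unnecessary (though not wrong).
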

 \begin{proof}
 Straightforward.
 \end{proof}

\begin{defn}\label{defn2.1}
A \emph{multiplier cocycle for $j$}, or \emph{adapted right
$J$-cocycle}, is a bounded process $Y$ in~$\Al$ such that
\[
Y_0 = I_{\init \ot \Fock} \quad \text{and} \quad
Y_{s + t} = J_s( Y_t ) Y_s \quad ( s, t \in \Rplus).
\]
 A multiplier cocycle $Y$ is said to be
 \emph{quasicontractive}
 if there exists $\omega \in \Real$ such that
 $e^{-\omega t} \norm{Y_t} \les 1$ for all $t \in \Rplus$, whereas
 $Y$ is said to have
 \emph{bounded ultraweak stochastic derivative} if there exists
 $F \in B(\wh{\noise}) \otol \Al$ such that
  \begin{equation}\label{Ft to F}
\begin{bmatrix}
 t^{-1/2} E^{\vp( 0 )} \\[1ex] V_t^*
\end{bmatrix}
\bigl( Y_t - I_{\init\ot\Fock} \bigr)
\begin{bmatrix}
 t^{-1 / 2} E_{\vp( 0 )} & V_t
\end{bmatrix}
 \overset{\uw}\longrightarrow F \ \text{ as } t \to 0,
\end{equation}
 where, for $t\in\Rplus$,
 $V_t$ is the isometry defined in~\eqref{V isom}.
\end{defn}

\begin{rems}
A multiplier cocycle for the trivial flow $j \equiv \iotaAFock$ is
precisely a \emph{right QS bounded-operator cocycle}:
\begin{equation}
\label{QS operator cocycle}
X_0 = I_{\init\ot\Fock} \quad \text{and} \quad
X_{s + t} = \wt{\sigma}_s( X_t ) X_s \quad ( s, t \in \Rplus ),
\end{equation}
which is also a process in $\Al$ (\cite{LWcb1}).

If a multiplier cocycle for $j$ is strongly continuous on $\init \ot
\Fock$ then, by the Banach--Steinhaus Theorem, it is necessarily
locally uniformly bounded.

 Stochastic differentiability is relevant to the characterisation of
 perturbation processes for the free flow in
 Section~\ref{section:character}.

A bounded process $Y$ on $\init$ is an adapted right $J$-cocycle if
and only if the adjoint process $Z = ( Y^*_t )_{t \ges 0}$ is an
adapted left $J$-cocycle, that is, $Z$ satisfies $Z_0 = I_{\init \ot
\Fock}$ and $Z_{s + t} = Z_s J_s( Z_t )$ for all $s$,~$t \in
\Rplus$.
\end{rems}

In this paper we work exclusively with right cocycles.

 \begin{rem}
 The free flow is called \emph{inner} if
 it is unitarily implemented by a right QS unitary cocycle
 $U$ in $\Al$, so that
 $j_t = U_t^*( \, \cdot \, \ot I ) U_t$ and
$U_t \in \Al \otol \Noise_{[0,t[} \otol I_{[t,\infty[}$ for all
 $t\in\Rplus$.
 In this case, as is easily verified, the prescription
 \begin{equation} \label{eg bij}
 X \mapsto (U_t^* X_t)_{t\ges 0}
 \end{equation}
 defines a bijection from the class of right QS bounded-operator
 cocycles to the class of multiplier cocycles for $j$.
 \end{rem}

 Thus it is in the case of non-inner free flows
 that we go beyond QS operator cocycles.

 \begin{example}[\cite{LS},\cite{DaL}]
 A large source of such flows is provided by those which induce
 L\'evy processes on $C^*$-bialgebras and compact quantum groups.
 In the latter case, such flows are implemented by QS unitary cocycles; however,
 these cocycles are typically not processes \emph{in}  the algebra.
 A further source is given by the equivariant stochastic flows on
 those spectral triples for which there is currently a good notion of
 quantum isometry group (see~\cite{Gos}).
  \end{example}

\begin{lemma}\label{lemma 2.2}
Let $Y$ be a multiplier cocycle for $j$ and suppose that~$Y$ is
locally uniformly bounded. Then $Y$ is \emph{exponentially
bounded}\tu{:} there exist $M \ges 1$ and $\beta \in \Real$ such
that $\norm{Y_t} \les M e^{\beta t}$ for all $t \in \Rplus$.
\end{lemma}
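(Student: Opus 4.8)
The plan is to exploit the cocycle relation $Y_{s+t} = J_s(Y_t)\,Y_s$ together with the fact that $J_s$ is a unital $*$-homomorphism, hence a contraction on $\Al\otol\Noise$, so that $\norm{J_s(Y_t)} \les \norm{Y_t}$ for all $s$. First I would fix $T>0$ and set $c := \sup_{0\les t\les T}\norm{Y_t}$, which is finite by the local uniform boundedness hypothesis; necessarily $c\ges\norm{Y_0}=1$. Then for $s,t\in[0,T]$ the cocycle relation gives $\norm{Y_{s+t}} \les \norm{J_s(Y_t)}\,\norm{Y_s} \les \norm{Y_t}\,\norm{Y_s}$, so $t\mapsto\log\norm{Y_t}$ is subadditive on $[0,T]$ and bounded above by $\log c$ there.

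Next I would bootstrap from $[0,T]$ to all of $\Rplus$ by iterating the cocycle relation: writing an arbitrary $t\ges 0$ as $t = nT + r$ with $n\in\mathbb{N}$ and $r\in[0,T[$, repeated application of $Y_{s+t}=J_s(Y_t)Y_s$ and submultiplicativity of the norm under $J_s$ yields $\norm{Y_t} \les \norm{Y_r}\,\norm{Y_T}^{\,n} \les c\cdot c^{\,n} = c^{\,n+1}$. Since $n \les t/T$, this gives $\norm{Y_t} \les c\cdot c^{\,t/T} = c\, e^{(t/T)\log c}$. Taking $M := c \ges 1$ and $\beta := (\log c)/T \in\Real$ (note $\beta\ges 0$ since $c\ges 1$, but any real value is allowed) we obtain $\norm{Y_t} \les M e^{\beta t}$ for all $t\in\Rplus$, as required.

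The only genuine point to check carefully is that $J_s$ is norm-contractive, so that $\norm{J_s(Y_t)} \les \norm{Y_t}$; this is immediate from the fact that $J$ is a unital $*$-homomorphic semigroup (Lemma~\ref{k to K} and the standing assumption that $j$ is a QS flow), since unital $*$-homomorphisms between $C^*$-algebras are contractive — indeed here $J_s$ is completely isometric into its range. There is no real obstacle; the argument is the standard passage from local boundedness plus submultiplicativity to exponential growth, and the measurability and adaptedness built into the notion of a bounded process in $\Al$ play no role beyond guaranteeing that the suprema and the quantities $\norm{Y_t}$ are well defined.
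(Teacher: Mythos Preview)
Your argument is correct and is essentially the same as the paper's: the paper simply records the submultiplicativity $\norm{Y_{s+t}} \les \norm{Y_s}\,\norm{Y_t}$ (obtained exactly as you do, from contractivity of the unital $*$-homomorphism $J_s$) and then invokes a standard semigroup reference for the passage from local boundedness to exponential growth, whereas you have written that standard step out in full.
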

\begin{proof}
In view of the inequality $\norm{Y_{s + t}} \les \norm{Y_s} \,
\norm{Y_t}$, this follows from a standard semigroup argument; see,
for example, Proposition~1.18 of~\cite{EBD}.
\end{proof}

\begin{rem}
Similarly, a QS cocycle $k$ on $\Al$ is exponentially completely
bounded (in the obvious sense) if its cb~norm is locally uniformly
bounded, since $\cbnorm{k_{s + t}} \les \cbnorm{k_s} \cbnorm{k_t}$.
\end{rem}

\begin{propn}\label{k is cocycle}
Let $Y$ and $Z$ be multiplier cocycles for $j$. Setting
\[
k_t( a ) := Y^*_t j_t( a ) Z_t \quad ( a \in \Al, \, t \in \Rplus)
\]
defines a normal completely bounded QS cocycle $k$ on $\Al$, provided
only that it is pointwise weakly measurable. If $Y$ and $Z$ are
strongly continuous on $\init \ot \Fock$ then~$k$ is ultraweakly
continuous and exponentially completely bounded.
\end{propn}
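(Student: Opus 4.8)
The plan is to verify the three cocycle ingredients in Definition of a QS cocycle: normality, complete boundedness, and the cocycle relations~\eqref{cocycle relations}, then treat the continuity addendum separately. First I would record that for each fixed~$t$ the map $a \mapsto Y_t^* j_t(a) Z_t$ is a composition of the normal completely bounded map $j_t$ with left and right multiplication by the fixed bounded operators $Y_t^*, Z_t \in \Al \otol B(\Fock)$; since $j$ is a QS flow its component maps are normal and completely bounded, and multiplication by fixed elements of a von~Neumann algebra preserves both properties, so each $k_t$ is normal and completely bounded with $\cbnorm{k_t} \les \norm{Y_t}\,\norm{j_t}_\cb\,\norm{Z_t} = \norm{Y_t}\,\norm{Z_t}$ (using $\norm{j_t}_\cb = 1$ as $j$ is unital $*$-homomorphic). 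That $k_t(a) \in \Al \otol B(\Fock)$ and the adaptedness of $k$ follow from the corresponding properties of $Y$, $Z$ and $j$ together with the block-multiplicative structure; combined with the assumed pointwise weak measurability this makes $k$ a mapping process on~$\Al$.

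Next I would check $k_0 = \iotaAFock$, which is immediate from $Y_0 = Z_0 = I_{\init\ot\Fock}$ and $j_0 = \iotaAFock$. For the multiplicativity relation $k_{r+t} = \wh{k}_r \comp \sigma_r \comp k_t$, the key is to pass through the $E_0$-semigroup $J = (\wh{\jmath}_t \comp \sigma_t)_{t\ges 0}$ of Lemma~\ref{k to K}. Using the flow cocycle identity $j_{r+t} = \wh{\jmath}_r \comp \sigma_r \comp j_t$ and the multiplier-cocycle relations $Y_{r+t} = J_r(Y_t)Y_r$, $Z_{r+t} = J_r(Z_t)Z_r$, I would compute
\[
k_{r+t}(a) = Y_{r+t}^*\, j_{r+t}(a)\, Z_{r+t}
= Y_r^*\, J_r(Y_t)^*\, (\wh{\jmath}_r\comp\sigma_r)(j_t(a))\, J_r(Z_t)\, Z_r.
\]
Since $J_r = \wh{\jmath}_r\comp\sigma_r$ is a $*$-homomorphism into $\Al\otol\Noise$, the middle three factors combine to $J_r\bigl(Y_t^* j_t(a) Z_t\bigr) = J_r(k_t(a))$, giving $k_{r+t}(a) = Y_r^*\,J_r(k_t(a))\,Z_r = (\wh{k}_r\comp\sigma_r\comp k_t)(a)$ once one identifies $Y_r^*\,J_r(\,\cdot\,)\,Z_r$ with $\wh{k}_r\comp\sigma_r$ applied to $k_t(\,\cdot\,)$; here one uses that $Y_r, Z_r$ are adapted processes in $\Al$ so $J_r$ restricted to the relevant subalgebra together with left/right multiplication by $Y_r^*, Z_r$ reproduces the $\wh{(\,\cdot\,)}_r\comp\sigma_r$ construction. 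Normality and complete boundedness having been established, the weak cocycle relation this yields upgrades to the genuine cocycle relation.

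For the final sentence, suppose $Y, Z$ are strongly continuous on $\init\ot\Fock$. By the Banach--Steinhaus remark in the excerpt they are locally uniformly bounded, hence exponentially bounded by Lemma~\ref{lemma 2.2}; combined with $\cbnorm{k_t} \les \norm{Y_t}\,\norm{Z_t}$ this gives exponential complete boundedness. For ultraweak continuity of $k$, fix $a \in \Al$ and a normal functional; writing $k_t(a) - k_s(a)$ as a telescoping sum in which one factor changes at a time, I would estimate each term using strong continuity of $t\mapsto Y_t$, $t\mapsto Z_t$, the local boundedness just obtained, and—crucially—the strong continuity of $t\mapsto \wt\jmath_t(A)$ and the strong continuity of $J_t$ on bounded sets supplied by Lemma~\ref{uw to SOT}, to write $j_t(a)$ via the $E_0$-semigroup and control its variation. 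The main obstacle is precisely this last point: $j$ is only assumed ultraweakly continuous, not strongly continuous, so one cannot directly differentiate the product; the resolution is that strong continuity of the \emph{operator} process $j(a)$ on $\init\ot\Fock$ is exactly what Lemma~\ref{uw to SOT} delivers (strong continuity of $J$ on bounded sets, applied to the bounded family $\{j_s(a)\}$), and since $Y$, $Z$ act by strongly continuous families of uniformly bounded operators, the product $Y_t^* j_t(a) Z_t$ is then strongly continuous, a fortiori ultraweakly continuous.
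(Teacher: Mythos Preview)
Your argument is correct and follows essentially the same route as the paper's proof. The paper organises the cocycle verification slightly differently, invoking Lemma~\ref{k to K} to reduce matters to the semigroup property of $K_s := \wh{k}_s \comp \sigma_s$ and then verifying the identity $K_s(T) = Y_s^* J_s(T) Z_s$ on simple tensors $T = a \ot z$ (this is precisely the identification $Y_r^* J_r(\,\cdot\,) Z_r = \wh{k}_r \comp \sigma_r$ that you treat somewhat informally); for the continuity addendum the paper appeals only to the ultraweak continuity of~$j$ together with the local uniform boundedness of $Y$ and $Z$ --- the vector functionals $\omega_{Y_t u, Z_t v}$ converge in norm, which suffices --- so the detour through Lemma~\ref{uw to SOT} is not needed.
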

\begin{proof}
The last part follows from the fact that $Y$ and $Z$ are locally
uniformly bounded when they are strongly continuous, together with
Lemma~\ref{lemma 2.2} and the ultraweak continuity of $j$. By
Lemma~\ref{k to K}, it suffices therefore to show that the normal
completely bounded maps
$\bigl( K_t := \wh{k}_t \comp \sigma_t \bigr)_{t \ges 0}$ form a
semigroup on $\Al \otol \Noise$. The identity
\[
K_s( T ) = Y^*_s J_s( T ) Z_s
\quad ( s \in \Rplus, \, T \in \Al \otol \Noise ),
\]
where $J_s = \wh{\jmath}_s \comp \sigma_s$, is easily verified for a
simple tensor $T$ and so holds in general by linearity and
normality. Therefore, as~$J$ is a *-homomorphic semigroup,
\begin{align*}
K_{s + t}( T )
& = Y^*_s J_s( Y^*_t ) J_s\bigl( J_t( T ) \bigr) J_s( Z_t ) Z_s \\
& = Y^*_s J_s \bigl( Y^*_t J_t( T ) Z_t \bigr) Z_s
 = K_s\bigl( K_t( T ) \bigr)
\quad ( s, t \in \Rplus, \, T \in \Al \otol \Noise ),
\end{align*}
as required.
\end{proof}

\begin{rems}
Thus a pair of multiplier cocycles perturbs the free flow to give a
normal completely bounded QS cocycle on $\Al$
 (modulo the measurability caveat). When
$Z = Y$ the QS cocycle $k = \bigl( Y_t^* j_t( \cdot ) Y_t
\bigr)_{t\ges 0}$ is completely positive; if, also, $Y$ is
coisometric then $k$ is *-homomorphic, and then $k$ is unital, and
thus a QS flow, if and only if $Y$ is furthermore isometric and so
unitary.

When $k$ is ultraweakly continuous, the weak*-generator of its
expectation semigroup is a perturbation of the generator of the
expectation semigroup of the free flow $j$. In
Section~\ref{section:FKformulae} we describe this perturbation in
terms of the stochastic derivative of~$j$, when the multiplier
cocycles $Y$ and $Z$ are governed by QS differential equations.
\end{rems}

 The following is now clear.

 \begin{propn}[\emph{Cf.} the bijection~\eqref{eg bij}]
 \label{injection}
 The prescription
 $Y \mapsto \bigl( j_t( \cdot ) Y_t \bigr)_{t\ges 0}$
 defines an injection from the class of strong operator continuous
 multipler cocycles for $j$ into the class of normal completely
 bounded QS cocycles on $\Al$,
 with left inverse given by
 $k \mapsto \bigl( k_t( 1 ) \bigr)_{t\ges 0}$.
 \end{propn}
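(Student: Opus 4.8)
The plan is to reduce the assertion to Proposition~\ref{k is cocycle}. First I would record that the constant process $( I_{\init \ot \Fock} )_{t \ges 0}$ is a strong operator continuous multiplier cocycle for~$j$: it is manifestly a bounded, (trivially) strongly continuous process in~$\Al$, and, since the $E_0$-semigroup~$J$ is unital (by Lemma~\ref{k to K}(a), as $j$ is unital), the cocycle identity $I_{\init \ot \Fock} = J_s( I_{\init \ot \Fock} ) I_{\init \ot \Fock}$ holds for all $s, t \in \Rplus$. Hence, given a strong operator continuous multiplier cocycle~$Y$ for~$j$, Proposition~\ref{k is cocycle} applies with the pair of multiplier cocycles there taken to be $( I_{\init \ot \Fock} )_{t \ges 0}$ and~$Y$ respectively; the resulting process is $k := \bigl( I_{\init \ot \Fock}\, j_t( \cdot ) Y_t \bigr)_{t \ges 0} = \bigl( j_t( \cdot ) Y_t \bigr)_{t \ges 0}$. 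The measurability caveat in Proposition~\ref{k is cocycle} is automatically met here: strong continuity of~$Y$, together with the strong continuity of~$j$ furnished by Lemma~\ref{uw to SOT} and the uniform bound $\norm{j_t( a )} \les \norm{a}$, makes $t \mapsto j_t( a ) Y_t \zeta$ continuous, hence weakly measurable. Thus Proposition~\ref{k is cocycle} shows that $k$ is a normal completely bounded QS cocycle on~$\Al$ (in fact ultraweakly continuous and exponentially completely bounded), so the prescription $Y \mapsto \bigl( j_t( \cdot ) Y_t \bigr)_{t \ges 0}$ does map into the stated class.

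Next I would verify the left-inverse claim by direct computation. For a strong operator continuous multiplier cocycle~$Y$, writing $k_t := j_t( \cdot ) Y_t$, unitality of~$j$ gives $j_t( 1 ) = I_{\init \ot \Fock}$, whence $k_t( 1 ) = Y_t$ for every $t \in \Rplus$; so $k \mapsto \bigl( k_t( 1 ) \bigr)_{t \ges 0}$ returns~$Y$. Possessing a one-sided inverse, the map $Y \mapsto \bigl( j_t( \cdot ) Y_t \bigr)_{t \ges 0}$ is injective, and the proof is complete.

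There is no genuine obstacle here: the substantive content (normality, complete boundedness and the cocycle property of $\bigl( j_t( \cdot ) Y_t \bigr)_{t \ges 0}$) is already supplied by Proposition~\ref{k is cocycle}, and the only points requiring a moment's thought, namely that the constant process qualifies as a multiplier cocycle (which rests on unitality of~$J$) and that the weak-measurability hypothesis is met under the standing strong-continuity assumption, are both immediate.
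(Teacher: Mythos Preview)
Your argument is correct and matches the paper's intended reasoning: the paper gives no explicit proof, simply stating ``The following is now clear'' immediately after Proposition~\ref{k is cocycle}, and your write-up spells out precisely that deduction (apply Proposition~\ref{k is cocycle} with the identity cocycle and~$Y$, then recover~$Y$ via unitality of~$j$).
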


\begin{defn}
A \emph{bounded pure-noise QS cocycle in} $\Al$ is a
 (right, equivalently left) QS bounded operator
cocycle in $\Al$ which is $1_{\Al} \ot \Noise$-valued.
\end{defn}

We end this section by noting that bounded pure-noise QS operator
cocycles act on multiplier cocycles.

\begin{propn}\label{pure noise action}
Let $Y$ be a multiplier cocycle for $j$ and let $Z$ be a bounded
pure-noise QS operator cocycle in $\Al$.
 Then $Y Z$ is also a
multiplier cocycle for $j$, provided only that it is weakly
measurable.
\end{propn}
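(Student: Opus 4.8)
The plan is to verify the two defining properties of a multiplier cocycle for the tuple $YZ$, namely the initial condition and the cocycle identity $({YZ})_{s+t} = J_s\bigl(({YZ})_t\bigr)({YZ})_s$, together with boundedness and the fact that $YZ$ is a process in $\Al$. Boundedness is immediate since $Y$ and $Z$ are each bounded, so $({YZ})_t = Y_t Z_t$ is a bounded operator for each $t$; adaptedness of the product follows from adaptedness of each factor, and weak measurability is granted by hypothesis. That $YZ$ is a process \emph{in} $\Al$ requires that its matrix elements $E^{\vp(f)}(Y_t Z_t)E_{\vp(g)}$ lie in $\Al$; since $Y_t \in \Al\otol\Noise$ and $Z_t \in 1_{\Al}\ot\Noise$, the product $Y_t Z_t$ lies in $\Al\otol\Noise$ and hence its matrix elements lie in $\Al$. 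The initial condition is clear: $({YZ})_0 = Y_0 Z_0 = I_{\init\ot\Fock}$.

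The substance is the cocycle identity. First I would record the two relevant identities: the multiplier cocycle relation $Y_{s+t} = J_s(Y_t)Y_s$ and the pure-noise operator cocycle relation $Z_{s+t} = \wt{\sigma}_s(Z_t)Z_s$. The key algebraic step is to commute $Y_s$ past $\wt\sigma_s(Z_t)$. Here I would use that $Z_t$ is $1_{\Al}\ot\Noise$-valued, so $\wt\sigma_s(Z_t) \in 1_\Al\ot\Noise$ (indeed $\wt\sigma_s$ maps $1_{\Al}\ot\Noise$ into itself, supported on $\Fock_{[s,\infty[}$), whence by the adaptedness decomposition~\eqref{decomp} the operators $\wt{\sigma}_s(Z_t)$ and $Y_s$, the latter living in $\Al\otol\Noise_{[0,s[}$, act on factors of the tensor decomposition of $\init\ot\Fock$ that commute past each other. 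Thus $Y_s\,\wt\sigma_s(Z_t) = \wt\sigma_s(Z_t)\,Y_s$. Combining,
\[
({YZ})_{s+t} = Y_{s+t}Z_{s+t} = J_s(Y_t)\,Y_s\,\wt{\sigma}_s(Z_t)\,Z_s
 = J_s(Y_t)\,\wt{\sigma}_s(Z_t)\,Y_s\,Z_s.
\]
Finally I would observe that $J_s = \wh{\jmath}_s\comp\sigma_s$ restricts on $1_\Al\ot\Noise$ to $\wt\sigma_s$ — since $\jmath$ is unital and *-homomorphic, $\wh\jmath_s$ acts as the identity on the $\Noise_{[0,s[}$-component it introduces — so $J_s(Z_t) = \wt\sigma_s(Z_t)$, giving $J_s(Y_t)\,\wt\sigma_s(Z_t) = J_s(Y_t)\,J_s(Z_t) = J_s(Y_t Z_t) = J_s(({YZ})_t)$, using that $J_s$ is a homomorphism. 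Hence $({YZ})_{s+t} = J_s(({YZ})_t)\,({YZ})_s$.

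The main obstacle I anticipate is purely bookkeeping around the identifications $\wt\sigma_s$ versus $J_s$ on the pure-noise algebra and the precise commutation claim: one must check carefully, using the tensor factorisations in~\eqref{decomp}, that $Y_s$ commutes with $\wt\sigma_s(Z_t)$, and that $J_s$ restricted to $1_\Al\ot\Noise$ genuinely coincides with $\wt\sigma_s$ (this uses the unitality and *-homomorphism property of the free flow). These are elementary once the right adaptedness/locality statements are invoked, but they are the only place where the argument is not a one-line manipulation. Everything else — boundedness, the process-in-$\Al$ property, adaptedness — is routine given the hypotheses and the stated caveat about weak measurability.
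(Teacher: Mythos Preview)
Your proposal is correct and follows essentially the same route as the paper's proof: both arguments hinge on the commutation of $Y_s$ with $\wt{\sigma}_s(Z_t)$ (from the adaptedness/locality decomposition) together with the identity $J_s(Z_t) = \wt{\sigma}_s(Z_t)$ (from unitality of $j$), and then the homomorphism property of $J_s$. The paper compresses both key facts into the single observation $\wt{\sigma}_s(Z_t) \in 1_\Al \ot I_{[0,s[} \ot \Noise_{[s,\infty[} \subset \bigl( \Al \otol \Noise_{[0,s[} \ot I_{[s,\infty[} \bigr)'$, whereas you separate them out and additionally verify the routine process-in-$\Al$ and adaptedness conditions explicitly; the substance is the same.
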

\begin{proof}
Let $s$, $t \in \Rplus$ and set $X = Y Z$. Then
\[
\wt{\sigma}_s( Z_t ) \in 1_\Al \ot I_{[ 0, s [} \ot \Noise_{[ s,
\infty [} \subset \bigl( \Al \otol \Noise_{[ 0, s [} \ot I_{[ s,
\infty [} \bigr)',
\]
so $J_s( Z_t ) = \wt{\sigma}_s( Z_t )$ and
\[
X_{s + t} = J_s( Y_t ) Y_s \wt{\sigma}_s( Z_t ) Z_s
= J_s( Y_t ) J_s( Z_t ) Y_s Z_s = J_s( X_t ) X_s.
\]
Since $X_0 = I$, the result follows.
\end{proof}

A useful example of a bounded pure-noise QS operator cocycle is the
\emph{vacuum-projection cocycle}~$Z$. This is given by
\begin{equation}\label{vac proj cocycle}
Z_t = 1_\Al \ot \ket{\Omega_t}\bra{\Omega_t} \ot I_{[ t, \infty [}
\quad ( t \in \Rplus ),
\end{equation}
where $\Omega_t := \vp(0) \in \Fock_{[ 0, t [}$. Direct verification
confirms that $Z$ strongly satisfies the QS differential equation
\begin{equation}\label{QSDE for vacuum}
  Z_0 = I_{\init \ot \Fock}, \quad
d Z_t = - \Delta \wt{Z}_t d\Lambda_t,
\end{equation}
where $\Delta$ is in ampliated form. This is used in
Proposition~\ref{vacuum shredding}.

\section{Time-dependent QS differential equations}\label{section:qsdes}

In this short section we detail the basic existence theorem required
for the construction of perturbation processes in
Section~\ref{section:perturb}, namely the coordinate-free and
dimension-independent counterpart to Proposition~3.1 of~\cite{GLW}. We
also highlight the H\"older-continuity properties of solutions of such
equations.

\begin{thm}\label{time dep existence}
Let $G$ be a bounded measurable process in $B( \khat ) \otol \Al$
with locally uniform bounds. The QS differential equation
\begin{equation}\label{time dep QSDE W}
  Y_0 = I_{\init \ot \Fock}, \quad
d Y_t = G_t \wt{Y}_t \, d\Lambda_t
\end{equation}
has a unique strong solution. This solution is such that
$Y_s E_\ve \in \Al \otol \ket{\Fock}$ for all $s\in \Rplus$ and
$\ve \in \Exps$, and
\begin{equation}\label{Holder 1}
\sup_{0 \les r < t \les T}
 ( t - r )^{-1 / 2} \norm{( Y_t - Y_r ) E_\ve}
< \infty \quad ( T > 0, \, \ve \in \Exps ).
\end{equation}
\end{thm}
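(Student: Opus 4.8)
The plan is to treat~\eqref{time dep QSDE W} as a linear quantum stochastic differential equation with a time-dependent coefficient and to solve it by the standard Picard iteration, using the Fundamental Estimate~\eqref{FE} to control the iterates. First I would fix $\ve = \vp(f)$ with $f \in \Step$, a horizon $T > 0$, and $u \in \init$, and define a sequence of processes by $Y_t^{(0)} = I_{\init\ot\Fock}$ and $Y_t^{(n+1)} = I + \int_0^t G_s \wt{Y}_s^{(n)}\, d\Lambda_s$. Since $G$ is bounded with locally uniform bounds, say $\sup_{s \les T}\norm{G_s} =: M_T < \infty$, and since $\wt{X}_t = I_\khat \otul X_t$, one checks that each $\bigl(G_s \wt{Y}_s^{(n)}\bigr)_{s\ges 0}$ is a QS-integrable process (using the sufficient conditions (a),(b) recalled in the excerpt, given continuity of the iterates, which follows inductively since QS integrals are continuous processes). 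The Fundamental Estimate, together with the identity $G_s\wt{X}_s \gradhat_s \zeta = G_s E_{\fhat(s)} X_s \gradhat_s\zeta$ encoding adaptedness, yields
\[
\norm{(Y_t^{(n+1)} - Y_t^{(n)})u\vp(f)}
\les C([0,t[,f)\, M_T \Bigl\{\int_0^t \norm{(Y_s^{(n)} - Y_s^{(n-1)})E_{\fhat(s)}\, u\vp(f)}^2\, ds\Bigr\}^{1/2},
\]
after absorbing the factor $\norm{\fhat(s)} \les 1 + \norm{f}$ into the constant. Because $\fhat(s)$ ranges over a finite set of values for $f \in \Step$, a uniform bound over this finite set gives a genuine Gronwall-type recursion, and iterating produces $\sum_n \sup_{t\les T}\norm{(Y_t^{(n+1)}-Y_t^{(n)})u\vp(f)} < \infty$, so the iterates converge uniformly on $[0,T]$, for each $u\vp(f)$, to a limit process $Y$. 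Continuity of $G_s\wt{Y}_s^{(n)}$ together with the uniform convergence lets one pass to the limit inside the QS integral, so $Y$ is a strong solution; moreover $Y_s E_\ve \in \Al \otol \ket{\Fock}$ follows since the first iterate has this form ($I E_\ve = \ket{\vp(f)}$-type) and $\Al \otol \ket{\Fock}$ is preserved by $T \mapsto \int_0^t G_s (I_\khat \otul T)\, d\Lambda_s$ when $G_s \in B(\khat)\otol\Al$, being ultraweakly closed.

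For uniqueness, if $Y$ and $Y'$ are two strong solutions, then their difference $D_t = Y_t - Y_t'$ satisfies $D_t = \int_0^t G_s \wt{D}_s\, d\Lambda_s$ with $D_0 = 0$; the same Fundamental Estimate argument as above, applied to $D$ itself, gives $\norm{D_t u\vp(f)}^2 \les K\int_0^t \max_{\text{values }c\text{ of }\fhat} \norm{D_s E_c u\vp(f)}^2\, ds$ for a constant $K$ depending on $T$, $f$, $M_T$, whence $D \equiv 0$ on $[0,T]$ by Gronwall, for every $T$. The H\"older bound~\eqref{Holder 1} is then read off directly from the Fundamental H\"older Estimate~\eqref{FHE} applied to the QS-integrable process $G_s\wt{Y}_s$: for $0 \les r < t \les T$,
\[
(t-r)^{-1/2}\norm{(Y_t - Y_r)E_{\vp(f)}}
\les C([r,t[,f)\sup_{s\in[r,t[}\norm{G_s E_{\fhat(s)}\wt{Y}_s E_{\vp(f)}}
\les \Bigl(\sup_{s\les T} C([s,t[,f)\Bigr) M_T \sup_{s\les T}\norm{Y_s E_{\vp(f)}},
\]
and $\sup_{s\les T}\norm{Y_s E_{\vp(f)}}$ is finite by the uniform convergence of the iterates; note $C([r,t[,f) = \sqrt{|t-r| + C(f_{[r,t[})^2}$ stays bounded for $0\les r<t\les T$.

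\textbf{Main obstacle.} The genuinely delicate point is the bookkeeping in the Picard estimate: the Fundamental Estimate produces, at each stage, a term involving $\norm{\cdot\, E_{\fhat(s)}\, u\vp(f)}$ rather than $\norm{\cdot\, u\vp(f)}$, i.e. the "shifted" vector $\fhat(s)u\vp(f)$ enters, so one cannot iterate a single scalar Gronwall inequality naively. The resolution — that for step functions $f$ the shift $\fhat(s)$ takes only finitely many values, so one can work with the maximum of the relevant norms over that finite set and obtain a closed recursion — is what makes the whole scheme go through; it also requires verifying that each iterate is genuinely a process \emph{in} $\Al$ so that $G_s \wt{Y}_s^{(n)}$ lands in $B(\khat)\otol\Al\otol\Noise$ and the estimates apply. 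Everything else is the routine transcription of the classical (or Markov-regular) argument into the coordinate-free, dimension-independent setting, exactly paralleling Proposition~3.1 of~\cite{GLW}.
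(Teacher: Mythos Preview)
Your proposal is correct and follows essentially the same approach as the paper: the paper's own proof is a citation to Theorems~10.3 and~10.4 of~\cite{Lqsi}, but the accompanying remark spells out precisely the strategy you describe --- Picard iteration controlled by the Fundamental Estimate~\eqref{FE} for existence, the same estimate for uniqueness, and the Fundamental H\"older Estimate~\eqref{FHE} for~\eqref{Holder 1}. One small notational slip: in your H\"older bound you write $G_s E_{\fhat(s)} \wt{Y}_s E_{\vp(f)}$, but since $\wt{Y}_s = I_{\khat} \ot Y_s$ the correct expression is $G_s \wt{Y}_s E_{\fhat(s)} E_{\vp(f)} = G_s E_{\fhat(s)} Y_s E_{\vp(f)}$; this does not affect the argument.
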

\begin{proof}
This follows from Theorems 10.3 and 10.4 of~\cite{Lqsi}.
\end{proof}

\begin{rem}
The proof of existence consists of a verification that the natural
Picard iteration scheme is well defined and converges, using the
Fundamental Estimate~\eqref{FE}. Uniqueness and the H\"older
estimate also follow from the Fundamental Estimate.
\end{rem}

\begin{propn}\label{propn loc u bdd = st cts}
Let $G$ be as in Theorem~\ref{time dep existence} and suppose that
the unique solution~$Y$ of \eqref{time dep QSDE W} is bounded. Then
the following are equivalent\tu{:}
\begin{rlist}
\item $Y$ has locally uniform bounds\tu{;}
\item $Y$ is strongly continuous on $\init \ot \Fock$.
\end{rlist}
\end{propn}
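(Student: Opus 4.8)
The plan is to prove the two implications separately, the nontrivial one being (i) $\Rightarrow$ (ii). For the easy direction (ii) $\Rightarrow$ (i), note that if $Y$ is strongly continuous on $\init\ot\Fock$ then in particular the map $(t,\zeta)\mapsto Y_t\zeta$ is separately continuous, so for each fixed $\zeta$ the set $\{Y_t\zeta : t\in[0,T]\}$ is bounded (continuous image of a compact set), and then the Banach--Steinhaus theorem gives $\sup_{t\in[0,T]}\norm{Y_t}<\infty$. This is exactly the remark already made in Section~\ref{section:multipliers} after Definition~\ref{defn2.1}, so it needs only a sentence.

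For (i) $\Rightarrow$ (ii), the idea is to bootstrap the H\"older estimate~\eqref{Holder 1} from exponential vectors to all of $\init\ot\Fock$ using the now-available local uniform bound. First I would fix $T>0$ and $\ve\in\Exps$; by Theorem~\ref{time dep existence}, $\norm{(Y_t-Y_r)E_\ve}\les C_{T,\ve}(t-r)^{1/2}$ for $0\les r<t\les T$, which together with right-continuity at $0$ (where $Y_0=I$) shows $t\mapsto Y_t E_\ve$ is norm-continuous, hence $t\mapsto Y_t\eta$ is continuous for every $\eta$ in the dense subspace $\init\otul\Exps$. Now let $\eta\in\init\ot\Fock$ be arbitrary and let $\eta_n\to\eta$ with $\eta_n\in\init\otul\Exps$. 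Then for $s,t\in[0,T]$,
\[
\norm{Y_t\eta - Y_s\eta} \les \norm{Y_t(\eta-\eta_n)} + \norm{Y_t\eta_n - Y_s\eta_n} + \norm{Y_s(\eta_n-\eta)} \les 2M_T\norm{\eta-\eta_n} + \norm{(Y_t-Y_s)\eta_n},
\]
where $M_T := \sup_{t\in[0,T]}\norm{Y_t}<\infty$ by hypothesis~(i). Choosing $n$ large to control the first and third terms and then using continuity of $t\mapsto Y_t\eta_n$ on $[0,T]$ to control the middle term, one obtains continuity of $t\mapsto Y_t\eta$ on $[0,T]$; since $T$ was arbitrary this gives strong continuity on all of $\Rplus$.

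The only real obstacle is ensuring that the estimate~\eqref{Holder 1} and the hypothesis of local uniform boundedness genuinely interact as claimed --- that is, that $Y_s E_\ve$ really lies in $\Al\otol\ket{\Fock}$ so that $\norm{(Y_t-Y_r)E_\ve}$ is the relevant quantity, and that taking vectors $u\vp(f)$ for $u\in\init$, $f\in\Step$ and extending by linearity over $\Exps$ is legitimate. All of this is furnished directly by Theorem~\ref{time dep existence}, so the argument is essentially a two-step density-and-uniform-bound approximation with no delicate estimates beyond those already in hand. I would present it exactly in the order above: easy direction first, then the H\"older-to-density bootstrap.
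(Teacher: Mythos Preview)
Your proposal is correct and follows essentially the same approach as the paper: the paper's proof simply states that (ii)~$\Rightarrow$~(i) is Banach--Steinhaus, and that (i)~$\Rightarrow$~(ii) follows from the continuity expressed in~\eqref{Holder 1} together with the density of $\init\otul\Exps$. You have spelled out exactly these two steps in detail.
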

\begin{proof}
That (ii) implies (i) is an immediate consequence of the
Banach--Steinhaus Theorem; the converse follows from the continuity
of $Y$, as expressed in~\eqref{Holder 1}, and the density of $\init
\otul \Exps$.
\end{proof}

Often the adjoint process satisfies the local H\"older-continuity
condition too.

\begin{propn}\label{time dep Holder proposition}
Let $G$ be as in Theorem~\ref{time dep existence} and suppose that the
unique solution~$Y$ of \eqref{time dep QSDE W} is bounded with locally
uniform bounds. Then
\begin{equation}\label{time dep Holder 2}
\sup_{0 \les r < t \les T}
 ( t - r )^{-1 / 2} \norm{( Y^*_t - Y^*_r ) E_\ve} < \infty
\quad ( \ve \in \Exps, \, T > 0 ),
\end{equation}
provided only that the process $\wt{Y}^* G^*$ is measurable.
\end{propn}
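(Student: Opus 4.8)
The plan is to deduce the H\"older estimate for the adjoint process~$Y^*$ from the Fundamental H\"older Estimate~\eqref{FHE}, applied not to the equation~\eqref{time dep QSDE W} itself but to the equation satisfied by~$Y^*$. Since $Y$ strongly satisfies $d Y_t = G_t \wt{Y}_t\,d\Lambda_t$ with $Y$ bounded and locally uniformly bounded, one expects formally that $d Y_t^* = \wt{Y}_t^* G_t^*\,d\Lambda_t$, that is, $Y_t^* = \int_0^t \wt{Y}_s^* G_s^*\,d\Lambda_s$ in the sense that $\int_0^t \wt{Y}_s^* G_s^*\,d\Lambda_s \subset Y_t^*$; this is exactly the adjoint-inclusion statement recorded after~\eqref{QIPF}, which is available provided the process $\wt{Y}^* G^*$ is QS integrable. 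The measurability hypothesis on $\wt{Y}^* G^*$ is precisely what is needed to invoke the sufficient condition for QS integrability: $\wt{Y}^* G^*$ is an adapted, measurable family of bounded operators with common domain $\khat\otul\init\otul\Exps$, and its norm is locally bounded in~$s$ because $Y$ has locally uniform bounds and $G$ has locally uniform bounds; a measurable, locally bounded family is locally square-integrable and locally integrable, hence QS integrable. (Strictly, the sufficient condition stated in the excerpt asks for continuity; but measurability together with local boundedness suffices equally well, since the estimates~\eqref{FE}, \eqref{FHE} only require the relevant integrability, as remarked there.)

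Granting that $Y_t^*$ agrees on $\init\otul\Exps$ with $\int_0^t H_s\,d\Lambda_s$ for the QS-integrable process $H_s := \wt{Y}_s^* G_s^*$, the Fundamental H\"older Estimate~\eqref{FHE} gives, for each $f\in\Step$ and all $0\les r < t$,
\[
(t-r)^{-1/2}\norm{(Y_t^* - Y_r^*)E_{\vp(f)}}
 \les C([r,t[, f)\,\sup_{s\in[r,t[}\norm{H_s E_{\fhat(s)}E_{\vp(f)}}.
\]
Now $\norm{H_s E_{\fhat(s)} E_{\vp(f)}} = \norm{\wt{Y}_s^* G_s^* E_{\fhat(s)} E_{\vp(f)}} \les \norm{\wt{Y}_s^*}\,\norm{G_s^*}\,\norm{E_{\fhat(s)}}\,\norm{\vp(f)}$, and $\norm{\wt{Y}_s^*} = \norm{Y_s}$, $\norm{G_s^*} = \norm{G_s}$, while $\norm{E_{\fhat(s)}} = \norm{\fhat(s)}$ is bounded over $s\in[0,T[$ since $f$ is a step function. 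Using the local uniform bounds on $Y$ and on $G$, the supremum on the right-hand side is bounded uniformly for $s\in[0,T[$, and $C([r,t[,f)$ is bounded for $0\les r<t\les T$ by its definition~\eqref{C I f}. This yields the bound~\eqref{time dep Holder 2} for exponential vectors of the form $E_{\vp(f)}$; passing to a general $\ve\in\Exps$ is immediate by linearity, since $\Exps$ is the linear span of the $\vp(f)$ and the estimate is subadditive in the vector argument (with $C([r,t[,f)$ replaced by a finite maximum over the finitely many step functions involved).

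The main obstacle is the justification of the adjoint-inclusion $\int_0^t \wt{Y}_s^* G_s^*\,d\Lambda_s \subset Y_t^*$ — that is, verifying that $\wt{Y}^* G^*$ really is the QS integrand for~$Y^*$ rather than merely a candidate. This is where the measurability hypothesis enters essentially: once $\wt{Y}^* G^*$ is known to be QS integrable, the inclusion follows from the general adjoint-inclusion principle for QS integrals recorded after~\eqref{QIPF} (applied with $G$ there taken to be $\wt{Y}^* G^*$, whose adjoint $\wt{Y} G$, restricted suitably, is the integrand defining~$Y$). Everything after that is the routine estimation above. I would therefore structure the proof as: (1) observe $\wt{Y}^* G^*$ is adapted, bounded with locally uniform bounds, and measurable, hence QS integrable; (2) invoke the adjoint-inclusion to identify $Y^*_t$ on $\init\otul\Exps$ with the corresponding QS integral; (3) apply~\eqref{FHE} and bound the right-hand side using the local uniform bounds and the step-function property of~$f$; (4) extend from $\vp(f)$ to arbitrary $\ve\in\Exps$ by linearity.
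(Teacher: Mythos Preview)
Your proposal is correct and follows essentially the same route as the paper's own proof: show that $\wt{Y}^* G^*$ is a bounded, adapted, measurable process with locally uniform bounds (hence QS integrable), invoke the adjoint-inclusion principle after~\eqref{QIPF} to identify $(Y_t^* - Y_r^*)|_{\init\otul\Exps}$ with $\int_r^t \wt{Y}_s^* G_s^*\,d\Lambda_s$, and then apply the Fundamental H\"older Estimate~\eqref{FHE}. Two small slips worth tidying: it is $Y_t^* - I$, not $Y_t^*$, that equals $\int_0^t H_s\,d\Lambda_s$ on $\init\otul\Exps$ (harmless, since only differences $Y_t^*-Y_r^*$ are used), and the integrand defining $Y$ is $G\wt{Y}$, not $\wt{Y}G$ as written in your parenthetical.
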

\begin{proof}
Under these hypotheses, $\wt{Y}^* G^*$ is a bounded measurable
process on~$\khat \ot \init$ with locally uniform bounds. Hence
$\wt{Y}^* G^*$ is QS integrable and such that
\[
\int_r^t \wt{Y}^*_s G^*_s \, d \Lambda_s \, E_{\ve} = ( Y^*_t -
Y^*_r ) E_{\ve} \quad ( \ve \in \Exps, \, 0 \les r \les t ).
\]
The Fundamental H\"older Estimate~\eqref{FHE} now gives
\eqref{time dep Holder 2}.
\end{proof}

\begin{rem}
Assuming that $Y$ is bounded with locally uniform bounds, the
process $\wt{Y}^* G^*$ is automatically measurable under either of
the following conditions:
\begin{alist}
\item
$\init$ and $\noise$ are separable;
\item
$Y$ is coisometric and $G^*$ is strongly continuous.
\end{alist}
In the former case this is because of the equivalence of strong and
weak measurability for functions with separable range; in the latter
it follows from the equivalence of weak and strong continuity for
isometry-valued functions.
\end{rem}

\section{Perturbation processes}
\label{section:perturb}

Our aim now is to construct multiplier cocycles for the free
flow~$j$ using QS calculus. If $F \in B( \khat ) \otol \Al$ then
Lemma~\ref{uw to SOT} implies that $G = \bigl( \wt{\jmath}_t( F )
\bigr)_{t\ges 0}$ is a continuous process, to which we may apply
both Theorem~\ref{time dep existence} and Proposition~\ref{time dep
Holder proposition}. This gives the following result.

\begin{thm}\label{existence*}
For all $F \in B( \khat )\otol \Al$ the QS differential equation
\begin{equation}\label{QSDE W*}
  Y_0 = I_{\init\ot\Fock}, \quad
d Y_t = \wt{\jmath}_t( F ) \wt{Y}_t \, d \Lambda_t
\end{equation}
has a unique strong solution. This solution is such that
$Y_s E_\ve \in \Al \otol \ket{\Fock}$ for all $s\in\Rplus$ and
\begin{equation*}\label{Holder 1*}
 \sup_{0 \les r < t \les T}
 ( t - r )^{-1 / 2} \norm{(Y_t - Y_r) E_\ve}
< \infty
\quad ( \ve \in \Exps, \, T > 0 ).
\end{equation*}
If \emph{either} $Y$ is bounded with locally uniform bounds and
$\init$ and $\noise$ are separable, \emph{or} $Y$ is coisometric, it
also satisfies the condition
\begin{equation}\label{Holder 2*}
\sup_{0 \les r < t \les T}
 ( t - r )^{-1 / 2}
 \norm{\bigl( ( Y^*_t - Y^*_r \bigr) E_\ve}
< \infty
\quad ( \ve \in \Exps, \, T > 0 ).
\end{equation}
\end{thm}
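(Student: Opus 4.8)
The plan is to recognise \eqref{QSDE W*} as the instance of the time-dependent quantum stochastic differential equation \eqref{time dep QSDE W} in which the coefficient process is $G = \bigl( \wt{\jmath}_t( F ) \bigr)_{t \ges 0}$, and then to read off the three assertions directly from Theorem~\ref{time dep existence}, Proposition~\ref{time dep Holder proposition} and the remark following the latter. So the work is entirely in checking that $G$ satisfies the required hypotheses, and then in matching the conclusions.

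First I would verify the standing hypotheses of Theorem~\ref{time dep existence}, namely that $G$ is a bounded measurable process in $B( \khat ) \otol \Al$ with locally uniform bounds. Since $\wt{\jmath}_t = \id_{B( \khat )} \otol j_t$ with $j_t$ a normal $*$-homomorphism, $\wt{\jmath}_t$ is completely contractive, so $\norm{G_t} \les \norm{F}$ for every $t$; in particular the bounds are uniform. That $t \mapsto G_t = \wt{\jmath}_t( F )$ is strongly continuous with values in $B( \khat ) \otol \Al \otol B( \Fock )$ — so that $G$ is an (adapted) measurable process in $B( \khat ) \otol \Al$ in the required sense — is precisely the first clause of Lemma~\ref{uw to SOT}. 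Theorem~\ref{time dep existence} then supplies the unique strong solution $Y$, together with $Y_s E_\ve \in \Al \otol \ket{\Fock}$ for all $s$ and $\ve$ and the local H\"older estimate \eqref{Holder 1} for this coefficient, which is the first displayed bound in the statement of Theorem~\ref{existence*}.

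For the adjoint H\"older estimate \eqref{Holder 2*} I would invoke Proposition~\ref{time dep Holder proposition}. Its hypotheses ask that $Y$ be bounded with locally uniform bounds — which holds in both listed situations, trivially so in the coisometric case where $\norm{Y_t} = 1$ — and that the process $\wt{Y}^* G^*$ be measurable. Because $j_t$ is $*$-homomorphic one has $G^*_t = \wt{\jmath}_t( F )^* = \wt{\jmath}_t( F^* )$, so $G^*$ is again strongly continuous by Lemma~\ref{uw to SOT}. Measurability of $\wt{Y}^* G^*$ is then covered by the remark after Proposition~\ref{time dep Holder proposition}: it is automatic when $\init$ and $\noise$ are separable, and it follows from strong continuity of $G^*$ when $Y$ is coisometric. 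In either case Proposition~\ref{time dep Holder proposition} delivers \eqref{time dep Holder 2} for this $G$, i.e.\ \eqref{Holder 2*}. I expect no substantive obstacle: the result is a corollary of the machinery already assembled, and the only points needing care are the bookkeeping checks above — chiefly that $\wt{\jmath}_t( F )$ genuinely defines a process \emph{in} $B( \khat ) \otol \Al$ and that $\wt{Y}^* G^*$ is measurable in each of the two permitted scenarios — both of which rest on Lemma~\ref{uw to SOT} together with the fact that $j$ is a normal $*$-homomorphic flow.
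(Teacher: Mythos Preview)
Your proposal is correct and follows essentially the same approach as the paper: set $G = \bigl(\wt{\jmath}_t(F)\bigr)_{t\ges 0}$, invoke Lemma~\ref{uw to SOT} for its strong continuity (and hence its standing as a bounded measurable process in $B(\khat)\otol\Al$ with locally uniform bounds), and then apply Theorem~\ref{time dep existence} and Proposition~\ref{time dep Holder proposition} together with the remark following the latter. The paper's own proof is a one-sentence pointer to precisely these ingredients, so you have simply spelled out the verification of the hypotheses that the paper leaves implicit.
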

 The unique solution will be denoted $\Y{j,F}$;
 see Theorem~\ref{YjF is a right J-cocycle} for more on such
 processes, which we refer to as \emph{perturbation processes}.
 The H\"older
conditions play a r\^ole in the characterisation of multiplier
cocycles for Markov-regular free flows
in~Section~\ref{section:character}.

\begin{propn}\label{Injectivity}
The map $F \mapsto \Y{j,F}$ is injective.
\end{propn}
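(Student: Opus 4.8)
The plan is to recover the coefficient $F$ from the perturbation process $\Y{j,F}$ by reading off the derivative of its weak form at the origin; injectivity is then immediate. Suppose, then, that $\Y{j,F}=\Y{j,F'}=:Y$. By Theorem~\ref{existence*}, $Y$ is a \emph{strong} solution of the quantum stochastic differential equation~\eqref{QSDE W*} for both $F$ and $F'$, so the stochastic integral $\int_0^{\,\cdot}\bigl(\wt{\jmath}_s(F)-\wt{\jmath}_s(F')\bigr)\wt Y_s\,d\Lambda_s$ equals $(Y-I)-(Y-I)=0$. Applying the polarised First Fundamental Formula~\eqref{FFF} to this vanishing process I would obtain
\[
\int_0^t\ip{\gradhat_s\zeta'}{\wt{\jmath}_s(F-F')\,\wt Y_s\,\gradhat_s\zeta}\,d s=0
\qquad(t>0,\ \zeta,\zeta'\in\init\otul\Exps).
\]

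The next step is to divide by $t$ and let $t\to0^+$. Taking $\zeta=u\,\vp(g)$ and $\zeta'=u'\,\vp(f)$ with $u,u'\in\init$ and $f,g\in\Step$, the vectors $\gradhat_s\zeta=\ghat(s)\,u\,\vp(g)$ and $\gradhat_s\zeta'=\fhat(s)\,u'\,\vp(f)$ are constant for $s$ in some interval $[0,\delta[$ (a step function agrees with its value at $0$ there), while $s\mapsto\wt{\jmath}_s(F-F')$ is strongly continuous and contractive by Lemma~\ref{uw to SOT}, and $s\mapsto Y_s\eta$ is continuous for each $\eta$, since strong solutions are continuous processes (Theorem~\ref{existence*}). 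Hence the integrand above is continuous at $0$, so the averaged integral tends to its value there, giving $\ip{\gradhat_0\zeta'}{\wt{\jmath}_0(F-F')\,\wt Y_0\,\gradhat_0\zeta}=0$. Since $\jmath_0=\iotaAFock$ yields $\wt{\jmath}_0(F-F')=(F-F')\ot I_\Fock$, and $\wt Y_0=I$, this reads
\[
\ip{\vp(f)}{\vp(g)}\;\ip{\fhat(0)\ot u'}{(F-F')\bigl(\ghat(0)\ot u\bigr)}=0 .
\]

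To finish, recall that exponential vectors are never orthogonal, so $\ip{\vp(f)}{\vp(g)}\ne0$, and that as $f$ and $g$ range over $\Step$ the vectors $\fhat(0),\ghat(0)\in\khat$ range over $\{\wh{c}:c\in\noise\}$; thus $\ip{\wh{c}\ot u'}{(F-F')(\wh{d}\ot u)}=0$ for every $c,d\in\noise$ and $u,u'\in\init$. The set $\{\wh{d}\ot u:d\in\noise,\ u\in\init\}$ has dense linear span in $\khat\ot\init$ (its span contains $\wh{0}\ot u$ and hence $(\wh{d}-\wh{0})\ot u$ for all $d,u$, and these together span a dense subspace of $\init\op(\noise\ot\init)=\khat\ot\init$), so, $F-F'$ being bounded, we conclude $F-F'=0$, that is, $F=F'$.

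I expect no serious obstacle: the only point requiring care is the passage from the vanishing integral to the pointwise identity at $s=0$, for which one must use test vectors built from step functions (constant functions on $\Rplus$ are not square-integrable) and invoke precisely the two regularity facts available, namely the strong continuity of $t\mapsto\wt{\jmath}_t(\cdot)$ from Lemma~\ref{uw to SOT} and the continuity of the solution $Y$ from Theorem~\ref{existence*}. The remainder is routine quantum stochastic bookkeeping: the assertion is, in essence, that a strong solution of~\eqref{QSDE W*} determines its driving coefficient.
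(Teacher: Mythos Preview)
Your proof is correct and follows essentially the same route as the paper's: both arguments apply the First Fundamental Formula to the vanishing difference of stochastic integrals, divide by $t$, and use the strong continuity of $s\mapsto \wt{\jmath}_s(F-F')$ together with the continuity of $Y$ to evaluate the limit at $s=0$, then conclude via the totality of the vectors $\chat\ot u$ in $\khat\ot\init$. The paper is slightly terser, choosing the concrete test vectors $u\,\ve(c_{[0,1[})$ and $v\,\ve(d_{[0,1[})$ from the outset rather than general step functions, but the logic is identical.
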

\begin{proof}
Let $F$, $G \in B( \khat ) \otol \Al$ be such that
$\Y{j,F} = \Y{j,G}$. Then, setting $H = F - G$ and $Y = \Y{j,F}$,
\begin{align*}
0 & = t^{-1}
\ip{u \ve(c_{[ 0, 1 [})}{( \Y{j,F}_t - \Y{j,G}_t ) v \ve(d_{[ 0, 1 [})} \\
 & = t^{-1} \int_0^t \ip{u \ve(c_{[ 0, 1 [})}%
{j_s( E^{\chat} H E_{\dhat})Y_s v \ve(d_{[0,1[})} \, d s \\
& \to e^{\ip{c}{d}} \ip{\chat u}{H \dhat v} \quad \text{as } t \to
0^+ \quad ( u, v \in \init, \, c, d \in \noise ),
\end{align*}
by the continuity of $Y$ and the strong continuity of
$s \mapsto j_s(E^{\chat} H E_{\dhat})$. It follows that $H = 0$, as
required.
\end{proof}

We next investigate the relationship between properties of $F$ and
those of the process $\Y{j,F}$ that it generates; for this the
algebraic structure of the generator needs clarification.

For all $F \in B( \khat ) \otol \Al$ we set $q( F ) := F^* + F + F^*
\Delta F$, where $\Delta$ has been ampliated without changing
notation. If $F$ has the block-matrix form
$\bigl[\begin{smallmatrix}
 k & m \\[0.5ex]
 l & w - 1
\end{smallmatrix}\bigr]$
then
\[
q( F ) = \tinymaths{\begin{bmatrix}
 k^* + k + l^* l & l^* w + m \\[1ex]
 m^* + w^* l & w^* w - 1
\end{bmatrix}}
\quad \mbox{and} \quad
q( F^* ) = \tinymaths{\begin{bmatrix}
 k^* + k + m m^* & m w^* + l^* \\[1ex]
 l + w m^* & w w^* - 1
\end{bmatrix}}.
\]

\begin{propn}\label{q and r}
Let $F \in B( \khat ) \otol \Al$ with the block-matrix form
$\bigl[\begin{smallmatrix}
 k & m \\[0.5ex]
 l & w - 1
\end{smallmatrix}\bigr]$, and let $\beta \in \Real$.
Then the following equivalences hold.
\begin{alist}
\item
\begin{rlist}
\item $q( F ) \les \beta \Delta^\perp$\tu{;}
\item $w$ is a contraction,
$b_1 := \beta 1 - ( k^* + k + l^* l ) \ges 0$ and there is a
contraction $v_1 \in \bra{\noise} \otol \Al$ such that
\begin{equation*}\label{form for m}
m = -l^* w + b_1^{1 / 2} v_1
 ( 1 - w^* w )^{1 / 2}\tu{;}
\end{equation*}
\item $q( F^* )\les \beta \Delta^\perp$\tu{;}
\item $w$ is a contraction,
$b_2 := \beta 1 - ( k + k^* + m m^* ) \ges 0$ and there is a
contraction $v_2 \in \ket{\noise} \otol \Al$ such that
\begin{equation*}\label{form for l}
l = - w m^* + ( 1 - w w^* )^{1 / 2} v_2 b_2.
\end{equation*}
\end{rlist}
\item
\begin{rlist}
\item $q( F ) = 0$\tu{;}
\item $w$ is an isometry, $k^* + k + l^* l = 0$ and $m = -l^* w$.
\end{rlist}
\item
\begin{rlist}
\item
$q( F^* ) = 0$\tu{;}
\item
$w$ is a coisometry, $k + k^* + m m^* = 0$ and $l=-w m^*$.
\end{rlist}
\end{alist}
\end{propn}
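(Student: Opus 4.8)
The plan is to read all three parts off the $2\times 2$ block form of $q(F)$ and of $q(F^*)$ exhibited just before the statement. Parts (b) and (c) are then immediate: $q(F)=0$ holds if and only if every entry of the displayed matrix for $q(F)$ vanishes, and the $(2,2)$-, $(1,1)$- and $(1,2)$-entries give respectively $w^*w=1$ (so $w$ is an isometry), $k^*+k+l^*l=0$ and $m=-l^*w$ --- the $(2,1)$-entry being just the adjoint relation and hence automatic. Part (c) follows verbatim from the displayed matrix for $q(F^*)$, the $(2,2)$-entry now yielding $ww^*=1$.

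For part (a) the engine is the standard criterion for positivity of an operator $2\times 2$ block matrix over a von~Neumann algebra: $\bigl[\begin{smallmatrix} A & B\\ B^* & C\end{smallmatrix}\bigr]\ges 0$ if and only if $A\ges 0$, $C\ges 0$ and $B=A^{1/2}DC^{1/2}$ for some contraction $D$ lying in the relevant corner of the algebra. Applying this to
\[
\beta\Delta^\perp - q(F) =
\begin{bmatrix} b_1 & -(l^*w+m)\\ -(w^*l+m^*) & 1-w^*w \end{bmatrix},
\]
its positivity is equivalent to $b_1\ges 0$, $w$ a contraction (from $1-w^*w\ges 0$) and $-(l^*w+m)=b_1^{1/2}v(1-w^*w)^{1/2}$ for a contraction $v$; rearranging and setting $v_1:=-v$ (still a contraction, and sitting in $\bra{\noise}\otol\Al$ because of the corner structure of $B(\khat)\otol\Al$) recovers condition~(ii). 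This gives (i)$\Leftrightarrow$(ii).

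The equivalence (iii)$\Leftrightarrow$(iv) is obtained by applying the same argument to $F^*$, whose block form interchanges $l$ and $m$ and replaces $k,w$ by $k^*,w^*$ (and for which $w^*$ is a contraction precisely when $w$ is), and then taking adjoints in the resulting factorisation. Should the four conditions be intended as mutually equivalent, the only further link needed is (i)$\Leftrightarrow$(iii); here I would observe that replacing $F$ by $F-\tfrac12\beta\Delta^\perp$ alters only the $(1,1)$-corner $k$ and turns $q(F)-\beta\Delta^\perp$ into $q\bigl(F-\tfrac12\beta\Delta^\perp\bigr)$, and likewise for the adjoint, thereby reducing matters to $\beta=0$; there $q(G)\les 0\Leftrightarrow q(G^*)\les 0$ can be extracted from the two factorisations in (ii) and (iv) by direct manipulation, or, more conceptually, by identifying each side with contractivity of the right, respectively the left, Hudson--Parthasarathy cocycle generated by $G$ and invoking $\norm{Y_t}=\norm{Y_t^*}$.

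The main obstacle is carrying out the Schur-complement step inside the von~Neumann algebra, namely producing the factorising contraction in $\bra{\noise}\otol\Al$ (respectively $\ket{\noise}\otol\Al$) rather than merely in $B(\noise\ot\init;\init)$; this rests on normality of $\Al$, so that the polar decompositions behind the choice $D = A^{1/2\,[-1]}\,B\,C^{1/2\,[-1]}$ stay within the algebra. The remainder is the routine but slightly fiddly bookkeeping of signs, adjoints and square roots needed to match the precise shapes asserted in (ii) and (iv).
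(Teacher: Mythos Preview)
Your handling of parts (b) and (c), and of the implications (i)$\Leftrightarrow$(ii) and (iii)$\Leftrightarrow$(iv) in part (a), matches the paper's proof exactly: both read off the displayed block form of $q(F)$ and $q(F^*)$ and invoke the standard positivity criterion for $2\times 2$ operator block matrices. The paper cites p.~547 of~\cite{FoF} and Lemma~2.1 of~\cite{GLSW} for the latter, and these references are stated for von~Neumann algebras, so your concern about the factorising contraction lying in $\bra{\noise}\otol\Al$ (respectively $\ket{\noise}\otol\Al$) is already taken care of there.

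The substantive issue is the link (i)$\Leftrightarrow$(iii), which is the genuinely nontrivial part of (a). The paper does not prove it but quotes Theorem~A.1 of~\cite{Lqsi}. Your proposed cocycle route has a gap as written: the identity $\norm{Y_t} = \norm{Y_t^*}$, with $Y$ the right Hudson--Parthasarathy cocycle generated by $G$, only relates $Y$ to the \emph{left} cocycle generated by $G^*$, whose contractivity criterion is again $q(G)\les 0$, not $q(G^*)\les 0$. What you actually need is to compare the right cocycle generated by $G$ with the \emph{left} cocycle generated by $G$ (equivalently, the adjoint of the right cocycle generated by $G^*$); these are different processes, related not by taking adjoints but by time reversal on Fock space, which is itself a nontrivial ingredient. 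Alternatively one can differentiate $\norm{Y^*_t\zeta}^2$ at $t=0$ via the Second Fundamental Formula for the adjoint equation --- this is precisely the mechanism in the later proof of Theorem~\ref{existence 2}(c) and does produce $q(G^*)$ --- but it is a forward reference here and may need a side hypothesis (separability, or coisometry) to ensure the adjoint process strongly satisfies its equation. Your fallback of ``direct manipulation'' between the factorisations (ii) and (iv) is possible but is exactly the hard algebraic step the paper outsources, and you have not carried it out.
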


\begin{proof}
(a) The equivalence of (i) and (iii) is contained in Theorem A.1
of~\cite{Lqsi}. The other equivalences in (a) follow from the
classical result that an operator block matrix
$\bigl[\begin{smallmatrix}
 a & b\\[0.5ex]
 c & d
\end{smallmatrix}\bigr] \in B(\hil_1 \op \hil_2) \otol \Al$
is nonnegative if and only if $c=b^*$, $a \ges 0$, $d \ges 0$ and
 $b = a^{1 / 2} v d^{1 / 2}$ for a contraction
 $v \in B(\hil_2; \hil_1) \otol \Al$;
 see p.~547 of~\cite{FoF} or Lemma 2.1 of~\cite{GLSW}. Parts (b) and (c) are
immediate consequences of the definition of $q$.
\end{proof}

 Set
$\CkA(\noise, \Al) := %
\bigcup_{\beta \in \Real} \CbetakA(\noise, \Al)$,
where
\[
\CbetakA(\noise, \Al) := %
\bigl\{ F \in B( \khat ) \otol \Al :
 q( F ) \les \beta \Delta^\perp \bigr\};
\]
 For
$F \in B( \khat ) \otol \Al$ and $\beta \in \Real$, the identity
\[
q( F - \mbox{$\frac12$} \beta \Delta^\perp ) = %
q( F ) - \beta \Delta^\perp
\]
implies that $F \in \CbetakA(\noise, \Al)$ if and only if $F -
\frac{\beta}{2} \Delta^\perp \in \CzerokA(\noise, \Al)$.

\begin{cor}\label{cor CkA}
Let $F \in B( \khat ) \otol \Al$ with the block-matrix form
$\bigl[\begin{smallmatrix}
 k & m \\[0.5ex]
 l & w - 1
\end{smallmatrix}\bigr]$.

\begin{alist}
\item The following are equivalent\tu{:}
\begin{rlist}
\item $F \in \CkA(\noise, \Al)$\tu{;}
\item
$w$ is a contraction and $m = -l^* w + r_1 ( 1 - w^*w )^{1 / 2}$ for
some operator $r_1 \in \bra{\noise} \otol \Al$\tu{;}
\item
 $w$ is a contraction and
$l = -w m^* + ( 1 - w w^* )^{1 / 2} r_2$ for some operator
$r_2 \in \ket{\noise} \otol \Al$.
\end{rlist}
\item If $m = -l^* w$ or $l = -w m^*$ then the following are
equivalent\tu{:}
\begin{rlist}
\item $F \in \CkA(\noise, \Al)$\tu{;}
\item $w$ is a contraction.
\end{rlist}
\item If $w = 0$ then $F \in \CkA(\noise, \Al)$. More specifically, if $w = 0$
then, for all $\beta \in \Real$ the following are equivalent\tu{:}
\begin{rlist}
\item $F \in \CbetakA(\noise, \Al)$\tu{;}
\item $\beta 1 \ges k^* + k + l^* l + m m^*$.
\end{rlist}
\end{alist}
\end{cor}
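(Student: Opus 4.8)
The plan is to deduce everything from Proposition~\ref{q and r}, exploiting the block-matrix description of $q(F)$ and the observation that $F \in \CkA(\noise,\Al)$ if and only if $F - \frac{\beta}{2}\Delta^\perp \in \CzerokA(\noise,\Al)$ for some $\beta$, equivalently if and only if $q(F) \les \beta\Delta^\perp$ for some real $\beta$.

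For part (a), I would argue as follows. If $F \in \CkA(\noise,\Al)$, then $F \in \CbetakA(\noise,\Al)$ for some $\beta$, so part (a) of Proposition~\ref{q and r} applies: $w$ is a contraction, $b_1 = \beta 1 - (k^* + k + l^*l) \ges 0$, and $m = -l^*w + b_1^{1/2}v_1(1-w^*w)^{1/2}$ for a contraction $v_1 \in \bra{\noise}\otol\Al$. Setting $r_1 := b_1^{1/2}v_1 \in \bra{\noise}\otol\Al$ gives (ii). Conversely, suppose (ii) holds, with $m = -l^*w + r_1(1-w^*w)^{1/2}$ for some $r_1 \in \bra{\noise}\otol\Al$, and $w$ a contraction. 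I need to produce a $\beta$ with $q(F) \les \beta\Delta^\perp$. The idea is to reverse-engineer $\beta$: since $r_1$ is bounded, choose $\beta$ large enough that $b_1 := \beta 1 - (k^* + k + l^*l) \ges 0$ and $b_1 \ges r_1 r_1^*$ (so that $b_1^{-1/2}$, suitably interpreted, makes $v_1 := b_1^{-1/2}r_1$ a contraction — or more carefully, one checks directly that the block matrix for $q(F)$, namely $\bigl[\begin{smallmatrix} k^*+k+l^*l & l^*w + m \\ m^*+w^*l & w^*w-1\end{smallmatrix}\bigr] = \bigl[\begin{smallmatrix} k^*+k+l^*l & r_1(1-w^*w)^{1/2} \\ (1-w^*w)^{1/2}r_1^* & w^*w-1\end{smallmatrix}\bigr]$, is dominated by $\beta\Delta^\perp$ for suitable $\beta$). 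The cleanest route is to invoke Proposition~\ref{q and r}(a) in the reverse direction with $v_1$ constructed from $r_1$ once $\beta$ is large; the only subtlety is handling the generalized inverse of $b_1$, which is routine. The equivalence of (i) and (iii) is identical, using the $q(F^*)$ half of Proposition~\ref{q and r}(a), noting that $q(F^*) \les \beta\Delta^\perp$ is equivalent to $q(F) \les \beta'\Delta^\perp$ for a possibly different $\beta'$ (indeed conditions (i) and (iii) of Proposition~\ref{q and r}(a) are stated as equivalent there).

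Part (b) is then immediate: if $m = -l^*w$, then in the characterization (ii) of part (a) we may take $r_1 = 0$, so $F \in \CkA(\noise,\Al)$ reduces to ``$w$ is a contraction''; the case $l = -wm^*$ uses characterization (iii) with $r_2 = 0$ symmetrically. For part (c), when $w = 0$ the block matrix for $q(F)$ simplifies to $\bigl[\begin{smallmatrix} k^*+k+l^*l & m \\ m^* & -1\end{smallmatrix}\bigr]$, and $q(F) \les \beta\Delta^\perp$ means $\bigl[\begin{smallmatrix} k^*+k+l^*l - \beta 1 & m \\ m^* & -1\end{smallmatrix}\bigr] \les 0$; by the standard $2\times 2$ operator matrix criterion (negate and apply the nonnegativity criterion cited in the proof of Proposition~\ref{q and r}, with the $(2,2)$-entry $1$), this holds if and only if $\beta 1 - (k^* + k + l^*l) \ges mm^*$, i.e.\ $\beta 1 \ges k^* + k + l^*l + mm^*$. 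In particular such $\beta$ always exists since all operators in sight are bounded, giving $F \in \CkA(\noise,\Al)$.

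The main obstacle I anticipate is the reverse direction of part (a): given the factorization $m = -l^*w + r_1(1-w^*w)^{1/2}$ with an \emph{arbitrary} bounded $r_1$ (not yet known to be of the form $b_1^{1/2}v_1$), producing an explicit $\beta$ and verifying $q(F) \les \beta\Delta^\perp$. The care needed is essentially in dealing with square roots and (generalized) inverses of nonnegative bounded operators, so that $v_1 := b_1^{-1/2}r_1$ is genuinely a contraction once $\beta \ges \norm{k^*+k+l^*l} + \norm{r_1}^2$; this is a standard but slightly fiddly manipulation. Everything else is a direct translation through the block-matrix formulas already recorded before the statement.
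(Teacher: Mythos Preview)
Your proposal is correct and follows the same route as the paper: part~(a) is deduced from Proposition~\ref{q and r}(a), part~(b) from part~(a) by taking $r_1 = 0$ or $r_2 = 0$, and part~(c) from the $2\times 2$ positivity criterion applied to $\beta\Delta^\perp - q(F)$. The paper is terse on~(a), simply citing Proposition~\ref{q and r}(a); your treatment of the reverse implication (choosing $\beta > \norm{k^*+k+l^*l} + \norm{r_1}^2$ so that $b_1$ is invertible and $b_1 \ges r_1 r_1^*$, then setting $v_1 = b_1^{-1/2}r_1$) is exactly the routine verification that sentence elides, and your anticipated obstacle is not a genuine one.
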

\begin{proof}
(a) follows from part (a) of Proposition~\ref{q and r} and (b) follows
from (a).

(c) If $w = 0$ and $\beta \in \Real$ then
\[
\beta \Delta^\perp - q( F ) =
\begin{bmatrix}
 \beta 1 - ( k + k^* + l^* l ) & -m \\[1ex]
  -m^* & 1
\end{bmatrix},
\]
which is nonnegative if and only if
$\beta 1 - ( k^* + k + l^* l ) \ges m m^*$.
\end{proof}

The relevance of the above observations is seen in the next theorem.

\begin{thm}\label{existence 2}
Let $Y = \Y{j,F}$, where $F\in B( \khat ) \otol \Al$, and let
$\beta \in \Real$.
\begin{alist}
\item
The following are equivalent\tu{:}
\begin{rlist}
\item $\bigl( e^{-\beta t / 2} Y_t \bigr)_{t \ges 0}$ is
contractive\tu{;}
\item $F \in \CbetakA(\noise, \Al)$.
\end{rlist}
 Thus $Y$ is quasicontractive
 \tu{(}in the sense of Definition~\ref{defn2.1}\tu{)}
  if and only if
 $F \in \CkA(\noise, \Al)$.
\item
The following are equivalent\tu{:}
\begin{rlist}
\item $Y$ is isometric\tu{;}
\item $q( F ) = 0$.
\end{rlist}
\item
If $Y$ is coisometric then $q( F^* ) = 0$.
\end{alist}
\end{thm}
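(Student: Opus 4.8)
The plan is to base everything on one norm identity for the unique strong solution $Y=\Y{j,F}$ of~\eqref{QSDE W*}. Since $Y$ is a strong solution, $Y_t = I_{\init\ot\Fock} + \int_0^t \wt{\jmath}_s(F)\wt{Y}_s\,d\Lambda_s$, and $\wt{\jmath}(F)\wt{Y}$ is QS integrable by Lemma~\ref{uw to SOT} together with continuity of the solution. Applying the Second Fundamental Formula~\eqref{SFF} to the increments of $X:=(Y_t - I_{\init\ot\Fock})_{t\ges0}$, for which $\wt{X}_s = \wt{Y}_s - I$, and the First Fundamental Formula~\eqref{FFF} to $\ip{\zeta}{(Y_t - I)\zeta}$, and using that $\wt{\jmath}_s = \id\otol j_s$ is a unital $*$-homomorphism — so that $\wt{\jmath}_s(F)^* = \wt{\jmath}_s(F^*)$, $\wt{\jmath}_s(\Delta) = \Delta$ and $\wt{\jmath}_s(F)^*\Delta\,\wt{\jmath}_s(F) = \wt{\jmath}_s(F^*\Delta F)$ — the cross terms linear in $\wt{\jmath}_s(F)$ cancel between the two formulae, leaving
\[
\norm{Y_t\zeta}^2 = \norm{\zeta}^2 + \int_0^t \ip{\wt{Y}_s\gradhat_s\zeta}{\wt{\jmath}_s(q(F))\,\wt{Y}_s\gradhat_s\zeta}\,ds \qquad (\zeta\in\init\otul\Exps,\ t\ges0),
\]
valid with no boundedness hypothesis on $Y$. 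Two elementary observations are used repeatedly: $\wt{\jmath}_s(\Delta^\perp) = \Delta^\perp$ by unitality of $j$; and, since $\Delta^\perp$ projects onto the $\Comp$-summand of $\khat$ and $Y_0 = I$, one has $\norm{\Delta^\perp\wt{Y}_s\gradhat_s\zeta} = \norm{Y_s\zeta}$ and $\norm{\Delta^\perp\gradhat_0\zeta} = \norm{\zeta}$ for $\zeta\in\init\otul\Exps$.

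For part~(a): if $q(F)\les\beta\Delta^\perp$ then applying the positive map $\wt{\jmath}_s$ gives $\wt{\jmath}_s(q(F))\les\beta\Delta^\perp$, so the integrand above is at most $\beta\norm{Y_s\zeta}^2$, and Gronwall's inequality yields $\norm{Y_t\zeta}^2\les e^{\beta t}\norm{\zeta}^2$, i.e.\ $(e^{-\beta t/2}Y_t)_{t\ges0}$ is contractive. Conversely, divide the identity by $t$ and let $t\to0^+$: the integrand is right-continuous at $0$ (step functions are right-continuous; $s\mapsto Y_s\zeta$ and $s\mapsto\wt{\jmath}_s(A)$ are continuous) with limiting value $\ip{\gradhat_0\zeta}{\wt{\jmath}_0(q(F))\gradhat_0\zeta}$, where $\wt{\jmath}_0$ is the ampliation $X\mapsto X\ot I_\Fock$, while contractivity of $(e^{-\beta t/2}Y_t)$ forces $t^{-1}(\norm{Y_t\zeta}^2 - \norm{\zeta}^2)\to{}$ at most $\beta\norm{\zeta}^2 = \beta\ip{\gradhat_0\zeta}{\wt{\jmath}_0(\Delta^\perp)\gradhat_0\zeta}$. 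Hence $\ip{\eta}{\wt{\jmath}_0(q(F) - \beta\Delta^\perp)\eta}\les0$ for every $\eta$ in the linear span $\{\gradhat_0\zeta:\zeta\in\init\otul\Exps\}$ of the vectors $(1,f(0))\ot u\ot\vp(f)$, which is dense in $\khat\ot\init\ot\Fock$ — indeed $\{(1,c):c\in\noise\}$ spans $\khat$, and for each fixed $c$ the set $\{u\vp(f):u\in\init,\,f(0)=c\}$ is total in $\init\ot\Fock$ (approximate $\vp(g)$ by $\vp(h)$ with $h$ equal to $c$ near the origin). By continuity of the quadratic form, $q(F)\les\beta\Delta^\perp$, i.e.\ $F\in\CbetakA(\noise,\Al)$; ranging over $\beta\in\Real$ and using the definition of $\CkA(\noise,\Al)$ gives the displayed quasicontractivity equivalence.

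Part~(b) drops out of the same identity. If $q(F)=0$ the integral vanishes, so $\norm{Y_t\zeta}=\norm{\zeta}$ and every $Y_t$ is isometric, hence bounded; conversely, isometry of $Y$ makes the integral vanish for all $t$, so its right-continuous integrand vanishes at $s=0$, giving $\ip{\gradhat_0\zeta}{\wt{\jmath}_0(q(F))\gradhat_0\zeta}=0$ for all $\zeta$, and the density argument of part~(a) forces $q(F)=0$. For part~(c), first note that coisometry of $Y$ makes $Y$ bounded with norm one, hence strongly continuous (Proposition~\ref{propn loc u bdd = st cts}), and $Y^*$ isometry-valued, hence also strongly continuous; so $\wt{Y}^*\wt{\jmath}(F^*)$ is bounded and strongly continuous, hence QS integrable, and $\int_0^t \wt{Y}_s^*\wt{\jmath}_s(F^*)\,d\Lambda_s \subset (Y_t - I)^* = Y_t^* - I$, exhibiting $Y^*$ as a strong solution of $dY_t^* = \wt{Y}_t^*\wt{\jmath}_t(F^*)\,d\Lambda_t$ — the H\"older estimate~\eqref{Holder 2*}, available for coisometric $Y$, is what legitimises this step. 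Running the derivation of the norm identity for this process, using $\wt{Y}_s\wt{Y}_s^* = I$ and $\wt{Y}_s\Delta\wt{Y}_s^* = \Delta$ (as $\Delta$ acts only on the $\khat$-factor) to telescope~\eqref{SFF}, and noting $q(F^*) = F + F^* + F\Delta F^*$, one obtains $\norm{Y_t^*\zeta}^2 = \norm{\zeta}^2 + \int_0^t \ip{\gradhat_s\zeta}{\wt{\jmath}_s(q(F^*))\gradhat_s\zeta}\,ds$. Coisometry of $Y$ means $Y^*$ is isometric, so the integral vanishes for every $t$; evaluating the integrand at $s=0$ and using density once more gives $q(F^*)=0$. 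No converse is expected here, since $q(F^*)=0$ gives no control on $q(F)$ and hence no boundedness of $Y$.

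I expect the main obstacle to be the careful derivation of the norm identity — in particular tracking the $-I$ in $\wt{X}_s = \wt{Y}_s - I$ through~\eqref{SFF} and checking that its extra term is exactly annihilated by~\eqref{FFF} — together with the density of $\{\gradhat_0\zeta : \zeta\in\init\otul\Exps\}$ in $\khat\ot\init\ot\Fock$, which is what allows the scalar data at $s=0$ to recover the operator inequality on $q(F)$. Everything else is Gronwall's inequality, elementary continuity, positivity of $\wt{\jmath}_s$, and the algebra of $q$ already laid out in Proposition~\ref{q and r} and Corollary~\ref{cor CkA}; part~(c) needs a little extra care only for the preliminary step of producing the quantum stochastic differential equation satisfied by $Y^*$.
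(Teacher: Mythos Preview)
Your argument is correct and follows essentially the same route as the paper's proof: derive the norm identity via the Second Fundamental Formula, read off isometry/contractivity from the sign of $\wt{\jmath}_s(q(F))$, and for the converse differentiate at $t=0$ and use totality of the vectors $\gradhat_0\zeta$; for part~(c), pass to the adjoint equation using coisometry and repeat.

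There is one minor difference worth recording. For part~(a) you bound the integrand by $\beta\norm{Y_s\zeta}^2$ and invoke Gronwall. The paper instead observes that $(e^{-\beta t/2}\Y{j,F}_t)_{t\ges0} = \Y{j,G}$ with $G := F - \tfrac12\beta\Delta^\perp$, and that $q(G) = q(F) - \beta\Delta^\perp$; this reduces the whole of~(a) to the single case $\beta = 0$, where the implication is immediate (the integrand is nonpositive) and no differential inequality is needed. Your Gronwall step is fine --- note that $s\mapsto \norm{Y_s\zeta}^2$ is absolutely continuous with derivative dominated by $\beta\norm{Y_s\zeta}^2$, so $e^{-\beta s}\norm{Y_s\zeta}^2$ is nonincreasing regardless of the sign of $\beta$ --- but the substitution trick is cleaner and also streamlines the converse. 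A second cosmetic difference: the paper encodes the density step via the operator $R : \init\otul\Exps \to \khat\ot\init$, $u\vp(f)\mapsto \fhat(0)u$, whose range is $\khat\otul\init$; this lets one work directly with $q(F)$ on $\khat\ot\init$ rather than with its ampliation $q(F)\ot I_\Fock$. Your density argument in $\khat\ot\init\ot\Fock$ reaches the same conclusion. Finally, in part~(c) your appeal to~\eqref{Holder 2*} is not what is actually doing the work --- what you need, and what you in fact establish, is that $\wt{Y}^*\wt{\jmath}(F^*)$ is strongly continuous hence QS integrable, so that the general adjoint relation $\int_0^t G_s^*\,d\Lambda_s \subset X_t^*$ applies.
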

\begin{proof}
Recalling our convention that $\Step$ consists of right-continuous
step functions, we use the unbounded operator $R$ from $\init \ot
\Fock$ to $\khat \ot \init$ determined by the conditions
\[
\Dom R = \init \otul \Exps \quad \text{and} \quad %
R u \vp( f ) = \fhat( 0 ) u.
\]
(a)\! \&\! (b) In view of the remarks above and the fact that
$\bigl( e^{-\beta t / 2} \Y{j,F}_t \bigr)_{t \ges 0}$ equals
$\Y{j,G}$, where $G = F - \frac12 \beta \Delta^\perp$, to prove (a)
it suffices to assume that $\beta = 0$. Let $\zeta \in \init \otul
\Exps$ and define the function
\[
\varphi: \Rplus \to \Comp, \ s \mapsto
\ip{\wt{Y}_s \gradhat_s \zeta}%
{\wt{\jmath}_s\bigl( q( F ) \bigr) \wt{Y}_s \gradhat_s\zeta}.
\]
Then $\varphi$ is right continuous and, by the Second Fundamental
Formula~\eqref{SFF},
\[
\norm{Y_t \zeta}^2 - \norm{\zeta}^2 = \int_0^t \varphi( s ) \, d s
\quad ( t \in \Rplus ).
\]
Thus $q( F ) \les 0$ implies that $Y$ is contractive and $q( F ) = 0$
implies that $Y$ is isometric. For the converse, note that
\[
t^{-1} \bigl( \norm{Y_t \zeta}^2 - \norm{\zeta}^2 \bigr) \to
\varphi(0^+) = \ip{R \zeta}{q( F ) R \zeta}
\]
as $t \to 0^+$ and $R$ has dense range $\khat \otul \init$. This
proves (a) and (b).

(c) Assume that $Y$ is coisometric. Then $Y^*$ is weakly continuous
and isometric, and so is a continuous process, and
$Y^*_t = I_{\init \ot \Fock} + %
\int_0^t \wt{Y}^*_s \wt{\jmath}_s( F^* ) \, d \Lambda_s$
for all $t \in \Rplus$. Therefore, arguing as above,
\[
\norm{Y_t^* \zeta}^2 - \norm{\zeta}^2 = \int_0^t \psi( s ) \, d s
\quad ( t \in \Rplus, \, \zeta \in \init \otul \Exps),
\]
for the function
\[
\psi: \Rplus \to \Comp, \ s \mapsto
\ip{\gradhat_s \zeta}%
{\wt{\jmath}_s\bigl( q( F^* ) \bigr) \gradhat_s \zeta}.
\]
Hence
\[
0 = t^{-1} \bigl( \norm{Y^*_t \zeta}^2 - \norm{\zeta}^2 \bigr) \to
\psi(0^+) = \ip{R \zeta}{q( F^* ) R \zeta}
\]
as $t \to 0^+$. Using the density of the range of $R$ once more, it
follows that~$q( F^* ) = 0$.
\end{proof}

\begin{rem}
Theorem~\ref{existence 2} contains a coordinate-free and
dimension-independent extension of Propositions~3.3 and~3.4
of~\cite{GLW}; it will be applied in the proof of
Theorem~\ref{Feynman-Kac theorem}.
\end{rem}

When the free flow $j$ is Markov regular, the converse of part (c) of
Theorem~\ref{existence 2} holds. We prove this next; the argument is a
little more involved than that for the converse of part (b), requiring
more than a simple application of the quantum It\^o product formula.

\begin{propn}\label{coisometry under MR}
Let $Y = \Y{j,F}$, where $F\in B( \khat ) \otol \Al$, and suppose
that~$j$ is Markov regular. The following are equivalent\tu{:}
\begin{rlist}
\item $Y$ is coisometric\tu{;}
\item $q( F^* )=0$.
\end{rlist}
\end{propn}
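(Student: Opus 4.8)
The plan is to establish the non-trivial implication (ii)~$\Rightarrow$~(i), since (i)~$\Rightarrow$~(ii) is precisely part~(c) of Theorem~\ref{existence 2}. Assume $q(F^*)=0$; by part~(c) of Proposition~\ref{q and r} this means $w$ is a coisometry, $k+k^*+mm^*=0$ and $l=-wm^*$, and in particular $F\in\CkA(\noise,\Al)$ by Corollary~\ref{cor CkA}(b), so $Y=\Y{j,F}$ is quasicontractive by Theorem~\ref{existence 2}(a). The idea is to produce a companion perturbation process that serves as a two-sided inverse-up-to-vacuum, exploiting Markov regularity of the free flow to pass from the Second Fundamental Formula to a genuine differential relation. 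Concretely, I would consider the left cocycle $Z=(Y_t^*)_{t\ges0}$, which satisfies $Z_{s+t}=Z_sJ_s(Z_t)$, and aim to show $Z_tZ_t^*=Z_tY_t$ equals $I$. Since $Y^*$ is weakly continuous (being quasicontractive with $q(F^*)=0$ it is in fact a continuous process satisfying $dY_t^* = \wt{Y}_t^*\wt{\jmath}_t(F^*)\,d\Lambda_t$), the quantum It\^o product formula applies to $Y_tY_t^*$, giving
\[
Y_tY_t^* = I + \int_0^t \wt{\jmath}_s(F)\wt{Y}_s\wt{Y}_s^* + \wt{Y}_s\wt{Y}_s^*\wt{\jmath}_s(F^*) + \wt{\jmath}_s(F)\wt{Y}_s\Delta\wt{Y}_s^*\wt{\jmath}_s(F^*)\,d\Lambda_s.
\]
Writing $W_t := Y_tY_t^* - I$, one checks using the block form of $q(F^*)$ that the integrand reorganises as $\wt{\jmath}_s(F)\wt W_s + \wt W_s\wt{\jmath}_s(F^*) + \wt{\jmath}_s(F)\Delta\wt W_s\Delta\wt{\jmath}_s(F^*)$ — the "constant" term vanishes exactly because $q(F^*)=0$ — so $W$ itself strongly satisfies a linear QS differential equation with zero initial condition.

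The main obstacle is that a linear QS differential equation with vacuum-adapted-type coefficients need not have a unique solution without some regularity on the driving data; here the hypothesis that $j$ is Markov regular is what rescues us. The strategy is to reduce the vanishing of $W$ to the vanishing of its vacuum expectation and its associated semigroups. Applying $E^{\vp(f_{[0,t[})}(\cdot)E_{\vp(g_{[0,t[})}$ to the integral equation for $W$ and using the extension of the First Fundamental Formula (Lemma~\ref{FFF extension}), together with the boundedness of $Y$ and hence of $W$, I obtain for each $c,d\in\noise$ an integral equation for $t\mapsto \Q^{c,d}_t$-type matrix elements $w^{c,d}_t := E^{\vp(c_{[0,t[})}W_tE_{\vp(d_{[0,t[})}$ of the form
\[
w^{c,d}_t = \int_0^t \mathcal{Q}^{c,d}_{0,s}\bigl(\text{linear expression in the }w^{c',d'}_s\bigr)\,ds,
\]
where the linear expression is built from $F$, the extra noise dimensions, and the associated semigroups $\mathcal Q^{c',d'}$ of $j$. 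Markov regularity makes all the $\mathcal Q^{c',d'}$ norm-continuous, so this is a genuine (finite- or infinite-dimensional) linear Volterra integral equation with bounded, locally-uniformly-bounded coefficients acting on the bounded function $s\mapsto w_s$; a Gr\"onwall / Picard uniqueness argument then forces $w^{c,d}\equiv0$ for all $c,d$. Since the matrix elements of $W$ against all exponential vectors vanish and $\Exps$ is total, $W=0$, i.e.\ $Y$ is coisometric.

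An alternative, possibly cleaner, route avoiding the full Volterra machinery: use the characterisation theorems of Section~\ref{section:qsa}. Since $j$ is Markov regular, the completely positive QS cocycle $k_t(a):=Y_t^*j_t(a)Y_t$ from Proposition~\ref{k is cocycle} is quasicontractive (as $Y$ is quasicontractive and $j$ is a flow), so Theorem~\ref{XYZ} applies; one computes its ultraweak stochastic derivative $\phi$ on the diagonal element $a=1$ directly from the QS differential equation satisfied by $Y$ and $Y^*$, obtaining $\phi(1)=\wt{q(F^*)}$-type expression, which is zero precisely when $q(F^*)=0$. Then $k_t(1)=Y_t^*Y_t$ satisfies the QSDE $dk_t=\wt k_t\comp\phi\,d\Lambda_t$ with $\phi(1)=0$, forcing $k_t(1)\equiv\iotaAFock(1)=I$ by the uniqueness clause of Theorem~\ref{XYZ}(d) (using that $\Dom\phi\ni1$ and Markov regularity provides the required core property). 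Wait — this gives $Y^*Y=I$, i.e.\ isometry of $Y$, not coisometry; so one should instead run this argument for the \emph{left} cocycle, or equivalently apply it to $j^\circ_t(a):=Y_tj_t(a)Y_t^*$-type object built from $Z=Y^*$, whose relevant stochastic derivative involves $q(F^*)$ rather than $q(F)$. I would pursue whichever of these two routes the authors have set up machinery for; the Volterra route is self-contained given Lemma~\ref{FFF extension} and Markov regularity, and is the safer bet.
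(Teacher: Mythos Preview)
Your starting point matches the paper's: reduce to (ii)$\Rightarrow$(i), observe $F\in\CkA(\noise,\Al)$ so $Y$ is contractive, apply the It\^o product formula to $Y_tY_t^*$, and use $q(F^*)=0$ to see that $W_t:=Y_tY_t^*-I$ satisfies the homogeneous QS equation
\[
W_t=\int_0^t\Bigl\{\wt{\jmath}_s(F)\wt W_s+\wt W_s\wt{\jmath}_s(F^*)+\wt{\jmath}_s(F\Delta)\wt W_s\wt{\jmath}_s(\Delta F^*)\Bigr\}\,d\Lambda_s,\qquad W_0=0.
\]
So far this is exactly what the paper does.

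The gap is in your proposed closure. The matrix element $w^{c,d}_t=E^{\vp(c_{[0,t[})}W_tE_{\vp(d_{[0,t[})}$ does \emph{not} satisfy a Volterra equation in the functions $w^{c',d'}$. Computing the integrand via the First Fundamental Formula gives terms such as
\[
E^{\vp(c_{[0,s[})}\,j_s\bigl(E^{\chat}FE_{\dhat}\bigr)\,W_s\,E_{\vp(d_{[0,s[})},
\]
a matrix element of a \emph{product} in $\Al\otol\Noise_{[0,s[}$ that cannot be factored as $\mathcal{Q}^{c,d}_s(\,\cdot\,)$ applied to some $w^{c',d'}_s$; the noncommutativity of $j_s(b)$ and $W_s$ obstructs any such decomposition. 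Markov regularity of $j$ alone gives you norm continuity of the $\mathcal{Q}^{c,d}$, but that does not help disentangle these products, so no Gr\"onwall argument is available at the level of scalar matrix elements.

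The paper's remedy is precisely to close the system at a higher level: introduce the \emph{mapping} process $l_t(a):=W_t\,j_t(a)$. Markov regularity is used via Theorem~\ref{amalgam} to write $j=k^\theta$ with $\theta$ completely bounded; a further application of the It\^o product formula then shows that $l$ itself satisfies a QS differential equation
\[
l_t(a)=\int_0^t\Bigl\{\wt{\jmath}_s(F)\,\wt l_s\bigl(\mu_1(a)\bigr)+\wt l_s\bigl(\mu_2(a)\bigr)+\wt{\jmath}_s(F\Delta)\,\wt l_s\bigl(\mu_3(a)\bigr)\Bigr\}\,d\Lambda_s
\]
for completely bounded maps $\mu_1,\mu_2,\mu_3$ built from $F$ and $\theta$. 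This equation \emph{is} closed in $l$, and iterating the Fundamental Estimate (with cb norms, after ampliating by $\hil$) yields $\norm{l_t(\cdot)E_{\vp(f)}}\les C^n/\sqrt{n!}\to0$, whence $l=0$ and $W_t=l_t(1)=0$. Your second route via Theorem~\ref{XYZ} runs into the difficulty you yourself flag: $Y_tj_t(\cdot)Y_t^*$ is not a QS cocycle in the sense required, so the machinery does not apply directly.
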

\begin{proof}
Because of Theorem~\ref{existence 2}, it remains only to prove that
(ii) implies (i). Suppose therefore that (ii) holds.
 Following Proposition~3.6 of~\cite{GLW}, define processes
\[
Z := ( Y_t Y^*_t - I )_{t\ges 0}, \quad
k := ( j_t( \cdot ) Y_t )_{t\ges 0} \quad \mbox{and} \quad
l := \bigl( Z_t j_t( \cdot ) \bigr)_{t \ges 0}.
\]
Part (a) of Theorem~\ref{existence 2} implies that $Y$ is contractive;
hence $Z$ is too and $k$ and $l$ are completely contractive
processes. We now show that $l$ is the zero process and so $Y$ is
coisometric by the unitality of $j$. By Theorem~\ref{amalgam}, there
exists a map $\theta \in CB( \Al; B( \khat ) \otol \Al )$ such that
$j = k^\theta$, and the quantum It\^o product formula \eqref{QIPF}
implies that $k = k^\psi$ for the completely bounded map
\begin{equation}\label{the map psi}
\psi : a \mapsto \theta( a ) + \iota( a ) F + \theta( a ) \Delta F,
\quad \text{where} \quad \iota = \iotaAkhat.
\end{equation}
Thus, since $k^\dagger = k^{\psi^\dagger}$, we have
 $Y^*_t = k^{\psi^\dagger}_t( 1 )$ for all $t \in \Rplus$;
 in particular, the
operator process $Y^*$ is continuous and such that
\[
Y^*_t = I +
\int_0^t
 \wt{k^{\dagger}_s}\bigl( \psi^\dagger( 1 ) \bigr) \,
d \Lambda_s
= I + \int_0^t \wt{Y}^*_s \wt{\jmath}_s( F^* ) \, d \Lambda_s
\quad ( t \in \Rplus ).
\]
It follows that $Z$ is continuous and, by the quantum It\^o product formula,
\[
Y_t Y^*_t = I + \int_0^t \Bigl\{
 \wt{\jmath}_s( F ) \wt{Y}_s \wt{Y}^*_s +
 \wt{Y}_s \wt{Y}^*_s \wt{\jmath}_s( F^* ) +
 \wt{\jmath}_s ( F \Delta) \wt{Y}_s \wt{Y}^*_s
 \wt{\jmath}_s ( \Delta F^* )
\Bigr\} \, d\Lambda_s
\]
for all $t \in \Rplus$. The assumption $q( F^* ) = 0$ now implies that
\[
Z_t = \int_0^t \Big\{
 \wt{\jmath}_s( F ) \wt{Z}_s + \wt{Z}_s \wt{\jmath}_s( F^* ) +
 \wt{\jmath}_s( F \Delta ) \wt{Z}_s \wt{\jmath}_s( \Delta F^* )
\Big\} \, d \Lambda_s \quad ( t \in \Rplus ).
\]
A further application of the product formula yields the identity
\[
l_t( a ) = \int_0^t \Bigl\{
 \wt{\jmath}_s( F ) \wt{l}_s\bigl( \mu_1( a ) \bigr) +
 \wt{l}_s\bigl( \mu_2( a ) \bigr) +
 \wt{\jmath}_s( F \Delta ) \wt{l}_s\bigl( \mu_3( a ) \bigr)
\Bigr\} \, d\Lambda_s
\]
for all $t \in \Rplus$ and $a \in \Al$, where $\mu_1 = \mu$,
$\mu_2 = F^* \mu( \cdot ) + \theta$ and
$\mu_3 = \Delta F^* \mu( \cdot )$ for the map
\[
\mu : a \mapsto I_\khat \ot a + \Delta \theta( a ).
\]
Hence, by the complete boundedness and normality of all the maps
involved, for any Hilbert space $\hil$ the process
$l^\hil := \bigl( \id_{B( \hil )} \otol l_t \bigr)_{t \ges 0}$
satisfies
\[
l^\hil_t( A ) = %
\int_0^t \Bigl\{
 \wt{j_s^\hil}( F^\hil ) \wt{l_s^\hil} \bigl( \mu_1^\hil( A ) \bigr) +
 \wt{l_s^\hil}\bigl( \mu_2^\hil( A ) \bigr) +
 \wt{j_s^\hil}\bigl( ( F \Delta )^\hil \bigr)
 \wt{l_s^\hil}\bigl( \mu_3^\hil( A ) \bigr)
\Bigr\} \, d \Lambda_s
\]
for all $t \in \Rplus$ and $A \in B( \hil ) \otol \Al$, where
$j^\hil := \bigl( \id_{B( \hil )} \otol j_t \bigr)_{t \ges 0}$ and,
in terms of the flip
$\Sigma : %
B( \hil \ot \khat ) \otol \Al \to B( \khat \ot \hil ) \otol \Al$,
the operator
$G^\hil := %
\Sigma( I_\hil \ot G )$ for $G \in B( \khat ) \otol \Al$ and the map
$\mu_i^\hil := \Sigma \comp ( \id_{B( \hil )} \otol \mu_i )$ for
$i = 1$, $2$, $3$. Therefore, by the Fundamental Estimate
after~\eqref{FE},
\[
\norm{l^\hil_t( A ) \eta \vp(f)}^2 \les 3 C\bigl( [ 0, t [, f
\bigr)^2 \int_0^t
 \max_i
  \bigl\|
   \wt{l^\hil_s}\bigl( \nu_i^\hil( A ) \bigr) \fhat( s ) \eta \vp( f )
  \bigr\|^2
d s
\]
for all $t \in \Rplus$, $A \in B( \hil ) \otol \Al$, $\eta \in \hil
\ot \init$ and $f \in \Step$, where $\nu_1 = \norm{F} \mu_1^\hil$,
$\nu_2 := \mu_2^\hil$ and $\nu_3 := \norm{F \Delta} \mu_3^\hil$.
Since $\wt{l^\hil} = l^{\khat \ot \hil}$, this estimate may be
iterated. Using the complete contractivity of $l$, $n$ iterations
yield the inequality
\[
\norm{l_t( \cdot ) E_{\vp( f )}} \les
\bigl\{
 \sqrt{3} \, C\bigl( [ 0, t [, f \bigr)
 \max_i \norm{\nu_i}_\cb \norm{\fhat_{[ 0, t [} }
\bigr\}^n / \sqrt{n!}
\]
for all $t \in \Rplus$ and $f \in \Step$; letting $n \to \infty$ shows
that the left-hand side is identically zero. Thus $l = 0$, as
required.
\end{proof}
\begin{rem}
This extends Proposition 3.6 of~\cite{GLW} and rectifies the proof
given there, which neglected a term in its equation (3.10).
\end{rem}

The next few results are relevant to the role of multiplier cocycles
in providing multipliers for Feynman--Kac perturbations of the
expectation semigroup of the free flow $j$. They also provide a link
to the vacuum-adapted approach to such perturbations (\cite{BLS}).

\begin{propn}
 \label{vacuum shredding}
Let $F\in B(\khat)\otol \Al$ with block-matrix form
$\bigl[\begin{smallmatrix}
 k & m \\[0.5ex]
 l & w - 1
\end{smallmatrix}\bigr]$
and let $Z$ be the vacuum projection
cocycle~\eqref{vac proj cocycle}. Then $\Y{j,F} Z = \Y{j,F'}$, where
$F' = F \Delta^\perp - \Delta = %
\bigl[\begin{smallmatrix}
 k & 0 \\[0.5ex]
 l & -1
\end{smallmatrix}\bigr]$.
\end{propn}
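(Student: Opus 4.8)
The plan is to show that the process $X := \Y{j,F}\, Z$ satisfies the quantum stochastic differential equation
\[
X_0 = I_{\init \ot \Fock}, \qquad d X_t = \wt{\jmath}_t( F' ) \wt{X}_t \, d \Lambda_t ,
\]
whence $X = \Y{j,F'}$ by the uniqueness assertion in Theorem~\ref{existence*}. Write $Y := \Y{j,F}$, which strongly satisfies~\eqref{QSDE W*}, and recall that $Z$ strongly satisfies~\eqref{QSDE for vacuum}. Since $X_0 = Y_0 Z_0 = I_{\init \ot \Fock}$, and the block-matrix form of $F' = F \Delta^\perp - \Delta$ is immediate from that of $F$, only the differential identity requires proof.

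First I would apply the quantum It\^o product formula~\eqref{QIPF} to the pair $(Y, Z)$. With the driving integrands written as $P_s := \wt{\jmath}_s( F ) \wt{Y}_s$ and $Q_s := - \Delta \wt{Z}_s$, this yields $X_t = I_{\init \ot \Fock} + \int_0^t R_s \, d \Lambda_s$, where $R_s = P_s \wt{Z}_s + \wt{Y}_s Q_s + P_s \Delta Q_s$. I would then simplify each summand using three elementary observations: the map $T \mapsto \wt{T} = I_{\khat} \otul T$ is multiplicative, so that $\wt{Y}_s \wt{Z}_s = \wt{X}_s$; the ampliated It\^o projection $\Delta$ commutes with every $\wt{T}$ and is idempotent; and $\wt{\jmath}_s( \Delta ) = \Delta$, because the free flow is unital. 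These give $P_s \wt{Z}_s = \wt{\jmath}_s( F ) \wt{X}_s$, $\wt{Y}_s Q_s = - \Delta \wt{X}_s = - \wt{\jmath}_s( \Delta ) \wt{X}_s$ and $P_s \Delta Q_s = - \wt{\jmath}_s( F ) \Delta \wt{X}_s = - \wt{\jmath}_s( F \Delta ) \wt{X}_s$, so that
\[
R_s = \wt{\jmath}_s\bigl( F - \Delta - F \Delta \bigr) \wt{X}_s = \wt{\jmath}_s\bigl( F \Delta^\perp - \Delta \bigr) \wt{X}_s = \wt{\jmath}_s( F' ) \wt{X}_s ,
\]
as wanted. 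This collapse of the three It\^o terms is the substantive step.

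The point calling for care is the legitimacy of the product formula, since $\Y{j,F}$ need not be bounded. I would deal with this by passing to the weak formulation: for $\zeta = u \vp( g ) \in \init \otul \Exps$ the vacuum projection acts explicitly, $Z_s( u \vp( g ) ) = e^{- \frac12 \norm{g_{[ 0, s [}}^2}\, u \vp( g_{[ s, \infty [} )$, so $\wt{X}_s \gradhat_s \zeta$ equals $e^{- \frac12 \norm{g_{[ 0, s [}}^2}$ times $\wt{Y}_s \gradhat_s \zeta'$ with $\zeta' := u \vp( g_{[ s, \infty [} ) \in \init \otul \Exps$; combined with the properties $Y_s E_\ve \in \Al \otol \ket{\Fock}$ and continuity of $Y$ from Theorem~\ref{existence*}, this shows that $\bigl( \wt{\jmath}_s( F' ) \wt{X}_s \bigr)_{s \ges 0}$ is QS integrable and that $X_s E_\ve \in \Al \otol \ket{\Fock}$. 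The First Fundamental Formula~\eqref{FFF} for $Y$, the analogous formula for $Z$ furnished by~\eqref{QSDE for vacuum}, and Lemma~\ref{FFF extension} then combine to give $\ip{\zeta}{( X_t - I_{\init \ot \Fock} ) \zeta} = \int_0^t \ip{\gradhat_s \zeta}{\wt{\jmath}_s( F' ) \wt{X}_s \gradhat_s \zeta} \, d s$ for all $\zeta \in \init \otul \Exps$; since the matrix elements of $X$ lie in $\Al$ and are continuous in $t$, $X$ is a weakly regular weak, hence strong, solution, and uniqueness concludes the argument. A reader content to assume $\Y{j,F}$ bounded --- which suffices for the intended applications --- may instead cite~\eqref{QIPF} directly, and the domain bookkeeping in this last paragraph becomes unnecessary.
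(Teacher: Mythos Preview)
Your proof is correct and follows essentially the same route as the paper's: apply the quantum It\^o product formula to $Y^{j,F}$ and $Z$, use unitality of $j$ to write $\Delta \ot I_\Fock = \wt{\jmath}_t(\Delta)$, collapse the three terms to $\wt{\jmath}_t(F\Delta^\perp - \Delta)\wt{X}_t$, and invoke uniqueness in Theorem~\ref{existence*}. The only difference is cosmetic: where you justify QS integrability via an explicit weak-formulation argument in your final paragraph, the paper dispatches this in one line by observing that $X$ inherits continuity from $\Y{j,F}$ (since $Z$ is bounded), so $\wt{X}$ is QS integrable.
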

\begin{proof}
Set $X = \Y{j,F} Z$. Then $X$ inherits continuity from $\Y{j,F}$,
thus $\wt{X}$ is QS integrable. Recall that $Z$ strongly satisfies
the QS differential equation~\eqref{QSDE for vacuum}. For all $t \in
\Real$, in view of the unitality of $j$, we have $\Delta \ot I_\Fock
= \wt{\jmath}_t( \Delta )$. Therefore, by the quantum It\^o formula,
the process $X$ satisfies the QS differential equation
\[
d X_t = %
\bigl( \wt{\jmath}_t( F ) \wt{X}_t - ( \Delta \ot I_\Fock ) \wt{X}_t - %
\wt{\jmath}_t( F ) ( \Delta \ot I_\Fock ) \wt{X}_t \bigr) %
\, d \Lambda_t = %
\wt{\jmath}_t (F') \wt{X}_t %
\, d \Lambda_t
\]
 with $X_0 = I$. It follows from uniqueness (in Theorem~\ref{existence*})
that $X = \Y{j,F'}$.
\end{proof}

As an immediate consequence we have the following corollary.

\begin{cor}\label{F to F1}
If $F \in B( \khat ) \otol \Al$ then
\[
\Y{j,F}_t E_{\vp( 0 )} = \Y{j,F \Delta^\perp}_t E_{\vp( 0 )} \quad (
t \in \Rplus ).
\]
In particular, if $F$ has the block-matrix form
$\bigl[\begin{smallmatrix} k & m \\[0.5ex] l & n
\end{smallmatrix}\bigr]$ then, for any contraction
 $w \in B( \noise ) \otol \Al$,
\[
\Y{j,F}_t E_{\vp( 0 )} = \Y{j,G}_t E_{\vp( 0 )}, \quad \text{ where
} \quad G = \begin{bmatrix}
 k & -l^* w \\[1ex]
 l & w - 1
\end{bmatrix}.
\]
\end{cor}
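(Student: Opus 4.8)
The corollary is immediate from Proposition~\ref{vacuum shredding} together with one elementary observation about the vacuum-projection cocycle $Z$ of~\eqref{vac proj cocycle}. The plan is first to record that, for each $t\in\Rplus$, under the factorisation $\Fock = \Fock_{[0,t[}\ot\Fock_{[t,\infty[}$ the vacuum vector $\vp(0)\in\Fock$ is the tensor product of the vacuum vectors of the two factors, the first of which is the unit vector $\Omega_t$ fixed by the projection $\ket{\Omega_t}\bra{\Omega_t}$; hence $Z_t\vp(0)=\vp(0)$, and therefore $Z_t E_{\vp(0)} = E_{\vp(0)}$ for all $t\in\Rplus$.

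Granting this, for any $H\in B(\khat)\otol\Al$ Proposition~\ref{vacuum shredding} gives $\Y{j,H}Z = \Y{j,H'}$ with $H' = H\Delta^\perp - \Delta$, whence
\[
\Y{j,H}_t E_{\vp(0)} = \Y{j,H}_t Z_t E_{\vp(0)} = \bigl(\Y{j,H}Z\bigr)_t E_{\vp(0)} = \Y{j,H'}_t E_{\vp(0)} \quad (t\in\Rplus).
\]
Writing $H = \bigl[\begin{smallmatrix} k & m\\ l & n \end{smallmatrix}\bigr]$ one has $H' = \bigl[\begin{smallmatrix} k & 0\\ l & -1 \end{smallmatrix}\bigr]$, which depends on $H$ only through $k$ and $l$. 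So if two elements of $B(\khat)\otol\Al$ share their $(1,1)$- and $(2,1)$-entries then their perturbation processes agree after right multiplication by $E_{\vp(0)}$. Taking the pair $F$ and $F\Delta^\perp$, which share the entries $k$ and $l$, gives the first assertion, and taking the pair $F$ and $G = \bigl[\begin{smallmatrix} k & -l^*w\\ l & w-1 \end{smallmatrix}\bigr]$ gives the second.

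The contractivity hypothesis on $w$ plays no part in the displayed equality; I would add a remark that its role is to ensure, via Corollary~\ref{cor CkA}(b) and Theorem~\ref{existence 2}(a), that $G\in\CkA(\noise,\Al)$ and hence that $\Y{j,G}$ is quasicontractive --- which is what makes the substitution useful in the sequel. There is no genuine obstacle here: the only points calling for care are the tensor-factorisation identity $Z_t E_{\vp(0)} = E_{\vp(0)}$ and the fact that $H'$ is insensitive to the second column of $H$, both of which are routine, so the result is indeed an immediate corollary of Proposition~\ref{vacuum shredding}.
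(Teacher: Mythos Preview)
Your argument is correct and is exactly the intended one: the paper does not give a separate proof, stating only that the corollary is an immediate consequence of Proposition~\ref{vacuum shredding}, and you have supplied precisely the missing steps --- namely $Z_t E_{\vp(0)} = E_{\vp(0)}$ and the observation that $H' = H\Delta^\perp - \Delta$ depends only on the first column of $H$. Your remark on the r\^ole of the contractivity of $w$ is also in agreement with the paper's own Remarks following the corollary.
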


\begin{rems}
Note that boundedness of the process $\Y{j,F}$ is not assumed here.
However $G \in \CkA(\noise, \Al)$, by part (b) of Corollary~\ref{cor
CkA}, so $\Y{j,G}$ is quasicontractive, by Theorem~\ref{existence
2}.

The following two instances of Corollary~\ref{F to F1} are of
particular relevance: for $F \in B( \khat ) \otol \Al$, we have
$\Y{j,F} E_{\vp( 0 )} = %
\Y{j,F'} E_{\vp( 0 )} = \Y{j,F''} E_{\vp( 0 )}$,
where
\begin{equation}\label{F2 G2}
F' = \begin{bmatrix}
 k & 0 \\[1ex]
 l & - 1
\end{bmatrix}
\quad \text{and} \quad
F'' = \begin{bmatrix}
 k & -l^* \\[1ex]
 l & 0
\end{bmatrix}.
\end{equation}
\end{rems}

\section{Bounded perturbation processes are multiplier cocycles}
\label{section:bounded pert}

In this section we show that, when bounded with locally uniform
bounds, the perturbation processes constructed in
Section~\ref{section:perturb} are multiplier cocycles for the free
flow $j$.

The following lemma is an immediate consequence of adaptedness
when~$j$ is implemented by a unitary QS operator cocycle, as it is
in~\cite{LiS} and~\cite{BP}. Recall the associated $E_0$-semigroup $J$
of the free flow, defined at the start of
Section~\ref{section:multipliers}.

\begin{lemma}\label{J and QS commute}
Let $X$ be a process in $B( \khat ) \otol \Al$ which is strongly
continuous on $\khat \ot \init \ot \Fock$ and such that the process
$\bigl( \int_0^t X_s \, d\Lambda_s \bigr)_{t \ges 0}$ is also
bounded. For all $t \ges r \ges 0$ and $R \in B( \init ) \otol
\Noise_{[ 0, r [} \ot I_{[ r, \infty [}$,
\[
J_r \Bigl( \int_0^{t - r} X_u \, d\Lambda_u \Big) R =
\int_r^t \wt{J}_r( X_{s - r} ) \wt{R} \, d\Lambda_s,
\]
where $\wt{R} := I_{\khat} \ot R$ and $\wt{J}_r$ denotes the unital
*-homomorphism $\id_{B( \khat )} \otol J_r$.
\end{lemma}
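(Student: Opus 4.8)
The plan is to verify the asserted identity by testing against exponential vectors and using the First Fundamental Formula together with the intertwining properties of $J_r$ with quantum stochastic integration. Both sides are operators on $\init \ot \Fock$, so it suffices to check that, for all $\zeta \in \init \otul \Exps$, the vectors obtained by applying each side to $\zeta$ agree; equivalently, it suffices to compute the matrix elements $\langle u\vp(f), \,\cdot\, v\vp(g)\rangle$ for $u,v\in\init$ and $f,g\in\Step$. The left-hand side is bounded by hypothesis (the process $\int_0^{t-r}X_u\,d\Lambda_u$ is bounded, $J_r$ is a bounded normal $*$-homomorphism, and $R$ is bounded), and the right-hand side is a quantum stochastic integral of a QS-integrable process — here one uses that $\wt J_r(X_{s-r})$ inherits strong continuity from $X$ via Lemma~\ref{uw to SOT} (the second assertion there, that $T\mapsto J_r(T)$ is strongly continuous on bounded sets) and that $\wt R$ is a fixed bounded operator, so the integrand is QS integrable. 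Thus the right-hand side is also bounded.

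The key computational step uses the First Fundamental Formula~\eqref{FFF}, or rather Lemma~\ref{FFF extension}, to express the matrix elements of both sides as integrals. For the right-hand side, $\langle u\vp(f), \int_r^t \wt J_r(X_{s-r})\wt R \,d\Lambda_s\, v\vp(g)\rangle = \int_r^t \langle \gradhat_s(u\vp(f)),\, \wt J_r(X_{s-r})\wt R\, \gradhat_s(v\vp(g))\rangle\,ds = \int_r^t \langle \fhat(s)u\,\vp(f),\, \wt J_r(X_{s-r})\wt R\,\ghat(s)v\,\vp(g)\rangle\,ds$. For the left-hand side one writes $J_r(\int_0^{t-r}X_u\,d\Lambda_u)R$ and uses adaptedness: because $R \in B(\init)\otol\Noise_{[0,r[}\ot I_{[r,\infty[}$ and $\int_0^{t-r}X_u\,d\Lambda_u$ is adapted (hence, after conjugating by $J_r$, lives on $\Al\otol\Noise_{[r,\infty[}$ in the appropriate sense), the product factorises across the tensor decomposition~\eqref{decomp} at time $r$. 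Concretely, $J_r = \wh{\jmath}_r\comp\sigma_r$ shifts the noise from $\Rplus$ to $[r,\infty[$, and one needs the compatibility of this shift with QS integration — that conjugating $\int_0^{t-r}X_u\,d\Lambda_u$ by $\sigma_r$ and then applying $\wh{\jmath}_r$ produces precisely $\int_r^t \wt J_r(X_{s-r})\,d\Lambda_s$ on the shifted Fock space. The cleanest route is to reduce everything to exponential-vector matrix elements, where the shift $T_r$ acts by $\vp(f)\mapsto\vp(f_r)$ and the integral identity becomes a change of variables $s = u+r$ in the time integral, combined with the identity $\widehat{f_r}(s) = \fhat(s-r)$ on $[r,\infty[$ (recalling that $\fhat = \fhat(\,\cdot\,)$ prepends the scalar $1$ and that $f$ is right-continuous).

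I would organise the proof as follows. First, establish QS integrability of $\bigl(\wt J_r(X_{s-r})\wt R\bigr)_{s\ges r}$ and boundedness of the right-hand integral, so that Lemma~\ref{FFF extension} applies; here condition (b) of that lemma — uniform boundedness of $E^{\fhat(s)\vp(f)}\wt J_r(X_{s-r})\wt R\, E_{\ghat(s)\vp(g)}$ over $s\in[r,t[$ — follows from local uniform boundedness of $X$ and of $R$ together with $\|J_r\| \les 1$. Second, write out the left-hand side via the definition $J_r = \wh{\jmath}_r\comp\sigma_r$ and the adaptedness of $\int_0^{t-r}X_u\,d\Lambda_u$, reducing its matrix elements against $\vp(f)$, $\vp(g)$ to those of $\int_0^{t-r}X_u\,d\Lambda_u$ against the shifted exponential vectors $\vp(S_r^*f|_{[0,t-r[}$-type data), then apply~\eqref{FFF} to that inner integral. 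Third, perform the substitution $s\mapsto s-r$ and match the two integrands using the shift-covariance $\widehat{(S_r^* f)}(u) = \fhat(u+r)$ and the fact that $\wt J_r(X_u) = \wh{\jmath}_r\comp\sigma_r$ applied to $\wt X_u = I_\khat\otul X_u$ agrees with $I_\khat\otul J_r(X_u)$ after the canonical identifications. Finally, by density of $\Exps$ and totality of vector functionals, conclude equality of the two operators.

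The main obstacle is the careful bookkeeping in the second and third steps: tracking how the shift $T_r$ (equivalently $S_r$) interacts with the time-integral, the $\gradhat$ operators, and the tensor-factorisation~\eqref{decomp}, and in particular handling the presence of the fixed operator $R$ supported on $[0,r[$ cleanly without it interfering with the integral over $[r,t[$. Because $R$ sits in $B(\init)\otol\Noise_{[0,r[}\ot I_{[r,\infty[}$ and the QS integral over $[r,t[$ acts on the $\Fock_{[r,t[}$ factor (with $\Fock_{[t,\infty[}$ untouched), these genuinely act on complementary tensor legs, so the factorisation is legitimate — but writing this out precisely, rather than merely invoking it, is where the real work lies. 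I expect no conceptual difficulty beyond this; everything reduces to the First Fundamental Formula once the shift-covariance is pinned down.
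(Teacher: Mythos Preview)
Your plan is correct and follows essentially the same route as the paper: verify the identity on exponential-vector matrix elements via the extended First Fundamental Formula (Lemma~\ref{FFF extension}), using the shift-covariance of $J_r = \wh{\jmath}_r \comp \sigma_r$ to reduce the left-hand side to a time-shifted QS integral and then match integrands after the substitution $s = u + r$.

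The one organisational difference worth flagging is how general $R$ is handled. You propose to treat arbitrary $R \in B(\init) \otol \Noise_{[0,r[} \ot I_{[r,\infty[}$ directly, relying on the tensor factorisation at time $r$ to separate $R$ from the integral over $[r,t[$. The paper instead establishes the identity first in the special case $R \in B(\init) \ot I_\Fock$, where the factorisation is trivial and the key identity
\[
\Omega[f,g] \comp J_r = \Omega[f_{r)}, g_{r)}] \comp j_{r)} \comp \Omega[S_r^* f, S_r^* g]
\]
(with $\Omega[f,g] := \id_\Al \otol \omega_{\vp(f),\vp(g)}$) does all the work; it then passes to general $R$ by taking $R$ to be an ampliated Weyl operator and invoking the First Fundamental Formula once more, since Weyl operators act diagonally on exponential vectors and are ultraweakly total in $\Noise_{[0,r[}$. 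Your direct route is sound, but the paper's two-step reduction sidesteps exactly the bookkeeping you identify as the main obstacle --- the Weyl-operator trick converts the presence of a general $R$ on the $[0,r[$ leg into a mere relabelling of the exponential test vectors, so no explicit tensor-leg factorisation argument is needed.
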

\begin{proof}
The right-hand side is well defined since
$\bigl( \wt{J}_r( X_{s - r} ) \bigr)_{s \ges r}$ is strongly
continuous and adapted. If $R \in B( \init ) \ot I_\Fock$ the result
follows from the extended First Fundamental Formula,
Lemma~\ref{FFF extension}, together with the identity
\[
\Omega[ f, g ] \comp J_r = \Omega[ f_{r)}, g_{r)} ] \comp j_{r)}
\comp
 \Omega[ S_r^* f, S_r^* g ],
\]
which holds for all $f$, $g \in \Step$ and $r \in \Rplus$; here
$f_{r)}$ and $g_{r)}$ denote the restrictions of the functions $f$
and $g$ to the interval $[ 0, r [$, $(S_r)_{r\ges 0}$ is the
right-shift semigroup on $L^2( \Rplus; \noise )$ and
$\Omega[ f, g ] := \id_\Al \otol \omega_{\vp( f ), \vp( g )}$. The
case of general $R$ now follows from the First Fundamental Formula,
taking $R$ to be an ampliated Weyl operator at first.
\end{proof}

\begin{rem}
If $\init$ and $\noise$ are separable then the continuity assumption
on the process $X$ may be replaced by locally uniform boundedness.
\end{rem}

\begin{thm}\label{YjF is a right J-cocycle}
Let $F \in B( \khat ) \otol \Al$ and suppose that the process
$\Y{j,F}$ is bounded with locally uniform bounds. Then $\Y{j,F}$ is
a multiplier cocycle for~$j$.
\end{thm}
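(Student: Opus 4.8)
The plan is to verify directly the three requirements of Definition~\ref{defn2.1}: that $\Y{j,F}_0 = I$, that $\Y{j,F}$ is a process in $\Al$, and that the cocycle identity $\Y{j,F}_{s+t} = J_s(\Y{j,F}_t)\,\Y{j,F}_s$ holds for all $s,t\ges 0$. The first two are immediate from Theorem~\ref{existence*}: the initial condition is that of the QSDE~\eqref{QSDE W*}, while $\Y{j,F}_t E_{\vp(g)}\in\Al\otol\ket{\Fock}$ forces $E^{\vp(f)}\Y{j,F}_t E_{\vp(g)}\in\Al$. So the real work is the cocycle identity, which I would establish by a uniqueness argument for the defining QSDE on a shifted time interval.

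Write $Y = \Y{j,F}$ and $G_u := \wt{\jmath}_u(F)\wt{Y}_u$. Under the standing hypothesis, Proposition~\ref{propn loc u bdd = st cts} gives that $Y$ is strongly continuous on $\init\ot\Fock$, so, using Lemma~\ref{uw to SOT} together with $\norm{\wt{\jmath}_u(F)}\les\norm{F}$, the process $(G_u)_{u\ges 0}$ is strongly continuous on $\khat\ot\init\ot\Fock$ with locally uniform bounds, hence QS integrable, with $Y_t = I + \int_0^t G_u\,d\Lambda_u$. Fix $s\ges 0$. Splitting the integral at $s$ gives $Y_{s+t} = Y_s + \int_s^{s+t}G_u\,d\Lambda_u$. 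On the other hand set $W_t := J_s(Y_t)\,Y_s$; since $Y_s\in\Al\otol\Noise_{[0,s[}\ot I_{[s,\infty[}$ is bounded and $\bigl(\int_0^t G_u\,d\Lambda_u\bigr)_{t\ges 0} = (Y_t - I)_{t\ges 0}$ is bounded, Lemma~\ref{J and QS commute} applies with $R = Y_s$ and, using that $J_s$ is unital and that the cocycle relation for $j$ gives $\wt{J}_s\comp\wt{\jmath}_{u-s} = \wt{\jmath}_u$, yields
\[
W_t - Y_s = J_s(Y_t - I)\,Y_s
= \int_s^{s+t}\wt{J}_s\bigl(\wt{\jmath}_{u-s}(F)\wt{Y}_{u-s}\bigr)\,\wt{Y}_s\,d\Lambda_u
= \int_s^{s+t}\wt{\jmath}_u(F)\,\wt{W}_{u-s}\,d\Lambda_u,
\]
the last step because $\wt{J}_s(\wt{Y}_{u-s})\,\wt{Y}_s = I_\khat\otul\bigl(J_s(Y_{u-s})Y_s\bigr) = \wt{W}_{u-s}$. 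I would also check, from the corresponding properties of $Y$ and from Lemma~\ref{uw to SOT}, that $(W_{u-s})_{u\ges s}$ is bounded, $u$-adapted, locally uniformly bounded and strongly continuous in $u$. Then both $(Y_u)_{u\ges s}$ and $(W_{u-s})_{u\ges s}$ are strong solutions on $[s,\infty[$ of the time-dependent QSDE $dX_u = \wt{\jmath}_u(F)\wt{X}_u\,d\Lambda_u$ with common value $Y_s$ at time~$s$, so the uniqueness argument of Theorem~\ref{time dep existence} — the iteration built on the Fundamental Estimate~\eqref{FE}, which is equally valid on $[s,\infty[$ — forces $Y_u = W_{u-s}$ for all $u\ges s$, that is, $Y_{s+t} = J_s(Y_t)\,Y_s$.

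The main obstacle is the middle computation: verifying that Lemma~\ref{J and QS commute} applies (strong continuity of $G$ and boundedness of its integral) and, crucially, recognising its right-hand side as the QS integral of $\wt{\jmath}_u(F)\wt{W}_{u-s}$ via the relation $\wt{\jmath}_u = \wt{J}_s\comp\wt{\jmath}_{u-s}$ and the multiplicativity of the $*$-homomorphism $\wt{J}_s$. Once both quantities are exhibited as solutions of the same initial value problem on $[s,\infty[$, uniqueness finishes the proof. As an alternative to invoking uniqueness one could subtract and iterate the Fundamental Estimate directly on $D_t := Y_{s+t} - W_t$, which satisfies $D_t = \int_s^{s+t}\wt{\jmath}_u(F)\wt{D}_{u-s}\,d\Lambda_u$ with $D_0 = 0$, exactly as in the proof of Proposition~\ref{coisometry under MR}, concluding that $D$ vanishes identically.
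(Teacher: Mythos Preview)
Your argument is correct and is essentially the paper's own proof: both apply Lemma~\ref{J and QS commute} with $R = Y_s$, use the cocycle identity $\wt{J}_s \comp \wt{\jmath}_{u-s} = \wt{\jmath}_u$ together with the multiplicativity of $\wt{J}_s$, and conclude by uniqueness for the QSDE. The only cosmetic difference is that the paper splices the two candidate processes into a single process $Z$ on all of $\Rplus$ (equal to $Y$ on $[0,s]$ and to $J_s(Y_{\cdot - s})Y_s$ on $[s,\infty[$) so that the uniqueness part of Theorem~\ref{existence*} may be invoked exactly as stated, rather than on the shifted interval.
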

\begin{proof}
Fix $r > 0$. We must show that
\begin{equation}\label{YjF eqn 1}
Y_t = J_r( Y_{t - r} ) Y_r
\end{equation}
for all $t > r$, where $Y = \Y{j,F}$. To this end, define a process
$Z$ in $\Al$ by letting
\[
Z_s := \left\{
\begin{array}{ll}
Y_s & \text{if } s \les r, \\
J_r(Y_{s - r}) Y_r & \text{if } s \ges r.
\end{array} \right.
\]
It follows from Proposition~\ref{propn loc u bdd = st cts} that $Y$
is strongly continuous on $\init \ot \Fock$ so, by Lemma~\ref{uw to
SOT}, the bounded process $Z$ is strongly continuous on $\init \ot
\Fock$ too. Hence $\bigl( \wt{\jmath}_s( F ) \wt{Z}_s \bigr)_{s \ges
0}$ is strongly continuous and thus QS integrable. Therefore, by
Lemma~\ref{J and QS commute} and the cocycle property of $j$,
\begin{align*}
Z_t & = J_r(Y_{t - r}) Y_r \\
& = Y_r +
 J_r\Bigl(
  \int_0^{t - r} \wt{\jmath}_u( F ) \wt{Y}_u \, d \Lambda_u
 \Bigr)
 Y_r \\
& = I + \int_0^r \wt{\jmath}_s( F ) \wt{Y}_s \, d \Lambda_s +
\int_r^t
 \wt{\jmath}_s( F ) \wt{J}_r( \wt{Y}_{s - r} ) \wt{Y}_r \,
d \Lambda_s \\
& = I + \int_0^t \wt{\jmath}_s( F ) \wt{Z}_s \, d \Lambda_s,
\end{align*}
for all~$t \ges r$. As this also holds for $t < r$ it follows, by
the uniqueness part of Theorem~\ref{existence*}, that $Z = Y$.
Therefore~\eqref{YjF eqn 1} holds, as required.
\end{proof}

By Proposition~\ref{k is cocycle} and Theorem~\ref{existence 2}, the
theorem above has the following immediate consequence.

\begin{cor}\label{jFG}
Let $F_1$, $F_2 \in B( \khat ) \otol \Al$ and suppose that $\Y{j,F_1}$
and $\Y{j,F_2}$ are bounded with locally uniform bounds. Then the
normal completely bounded process
\begin{equation}\label{j F G}
 \jFonetwo := %
 \bigl( ( \Y{j,F_1}_t )^* j_t( \cdot ) \Y{j,F_2}_t \bigr)_{t \ges 0}
\end{equation}
is an ultraweakly continuous QS cocycle which is exponentially
completely bounded. Moreover, if $F_1$, $F_2 \in \CkA(\noise, \Al)$
then the QS cocycle $\jFonetwo$ is completely quasicontractive.
\end{cor}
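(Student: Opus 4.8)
The plan is to obtain the corollary directly from the three results it cites---Theorem~\ref{YjF is a right J-cocycle}, Proposition~\ref{k is cocycle} and Theorem~\ref{existence 2}---together with Proposition~\ref{propn loc u bdd = st cts}, with no genuinely new computation. Write $Y := \Y{j,F_1}$, $Z := \Y{j,F_2}$ and $k := \jFonetwo = \bigl( Y_t^*\, j_t(\cdot)\, Z_t \bigr)_{t\ges0}$. First I would note that, since $Y$ and $Z$ are bounded with locally uniform bounds, Proposition~\ref{propn loc u bdd = st cts} makes each of them strongly continuous on $\init\ot\Fock$, while Theorem~\ref{YjF is a right J-cocycle} makes each of them a multiplier cocycle for~$j$. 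Thus $Y$ and $Z$ form a pair of strongly continuous multiplier cocycles, so Proposition~\ref{k is cocycle} applies to $k$ and shows it to be a normal completely bounded QS cocycle which is ultraweakly continuous and exponentially completely bounded---\emph{once} its pointwise weak measurability has been checked. For that I would argue that the matrix elements of $k$ are in fact continuous: for $a\in\Al$, $\zeta\in\init\otul\Exps$ and $\zeta'\in\init\ot\Fock$, the map $t\mapsto Z_t\zeta$ is norm-continuous (strong continuity together with the local bounds), the map $t\mapsto j_t(a)\eta$ is norm-continuous by Lemma~\ref{uw to SOT} and bounded by $\norm{a}\,\norm{\eta}$, and $t\mapsto Y_t\zeta'$ is norm-continuous; hence $t\mapsto\ip{\zeta'}{k_t(a)\zeta}=\ip{Y_t\zeta'}{j_t(a)\,Z_t\zeta}$ is continuous. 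This disposes of the measurability caveat and yields the first assertion.

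For the quasicontractivity statement I would invoke Theorem~\ref{existence 2}(a): since $F_1, F_2 \in \CkA(\noise,\Al)$, both $Y$ and $Z$ are quasicontractive, so there exist $\omega_1, \omega_2 \in \Real$ with $\norm{Y_t}\les e^{\omega_1 t}$ and $\norm{Z_t}\les e^{\omega_2 t}$ for all $t\ges0$. Because $j$ is a QS flow, each $j_t$ is a unital $*$-homomorphism, hence completely contractive, so $\cbnorm{j_t}=1$; consequently
\[
\cbnorm{k_t} = \cbnorm{Y_t^*\, j_t(\cdot)\, Z_t}
\les \norm{Y_t}\,\cbnorm{j_t}\,\norm{Z_t}
\les e^{(\omega_1+\omega_2) t}
\qquad(t\ges0),
\]
so $k=\jFonetwo$ is completely quasicontractive, with parameter $\beta=\omega_1+\omega_2$. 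Since every ingredient is already in hand, I do not anticipate a real obstacle; the only slightly fiddly point is the weak-measurability check above, which in any case could be absorbed into the observation that $k$ inherits continuity of its matrix elements from the strong continuity of $Y$, $Z$ and of the flow~$j$.
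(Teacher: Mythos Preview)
Your proposal is correct and follows the same route the paper indicates: the corollary is presented there as an immediate consequence of Theorem~\ref{YjF is a right J-cocycle}, Proposition~\ref{k is cocycle} and Theorem~\ref{existence 2}, and you have simply spelled out the details (including the bridge via Proposition~\ref{propn loc u bdd = st cts} from locally uniform bounds to strong continuity, which is needed to invoke the second sentence of Proposition~\ref{k is cocycle}). The measurability check is fine but could be shortened by noting that the ultraweak continuity of $k$ already delivered by Proposition~\ref{k is cocycle} subsumes pointwise weak measurability.
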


We abbreviate $j^{F, F}$ to $\jF$.

\begin{rem}
The completely positive QS cocycle $\jF$ is contractive if and only
if $F \in \CzerokA(\noise, \Al)$; it is unital if and only if
$\Y{j,F}$ is isometric or, equivalently, if $F$ has the block-matrix
form
\[
\begin{bmatrix}
 i h -\frac{1}{2} l^* l & -l^* w \\[1ex]
 l & w - 1
\end{bmatrix},
\quad \mbox{with $h$ selfadjoint and $w$ isometric};
\]
the QS cocycle $\jF$ is *-homomorphic if the multiplier $\Y{j,F}$ is
coisometric, for which a \emph{necessary} condition is that $F$ have
the block-matrix form
\[
\begin{bmatrix}
 i h -\frac{1}{2} m m^* & m \\[1ex]
 -w m^* & w - 1
\end{bmatrix},
\quad \mbox{with $h$ selfadjoint and $w$ coisometric}.
\]
\end{rem}

The next result provides considerable freedom in the choice of
multiplier cocycle for obtaining a given perturbation of the
expectation semigroup of the free flow $j$.

\begin{thm}\label{theorem X}
For $i=1$, $2$, let $F_i \in B( \khat ) \otol \Al$ and suppose that
$\Y{j,F_i}$ is bounded; let $F'_i$ and $F_i''$ be defined as
in~\eqref{F2 G2}. Then the expectation semigroups of the ultraweakly
continuous QS cocycles $j^{F_1', F_2'}$ and~$j^{F_1'', F_2''}$ are
both equal to that of $\jFonetwo$, namely $\bigl( \mathbb{E} \comp
\jFonetwo_t \bigr)_{t \ges 0}$.
\end{thm}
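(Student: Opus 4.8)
The plan is to compute each of the three expectation semigroups directly from the defining formula $\mathbb{E}\comp k_t = E^{\vp(0)} k_t(\cdot)\, E_{\vp(0)}$, using that this involves the perturbation processes only through the operators $\Y{j,F_i}_t E_{\vp(0)}$, which by Corollary~\ref{F to F1} are unchanged when $F_i$ is replaced by $F_i'$ or by $F_i''$. Concretely, for $a \in \Al$ and $t \in \Rplus$, using $E^{\vp(0)} = (E_{\vp(0)})^*$ together with the definition~\eqref{j F G} of $\jFonetwo$, I would first write
\[
\mathbb{E}\bigl( \jFonetwo_t( a ) \bigr)
 = E^{\vp(0)} ( \Y{j,F_1}_t )^* j_t( a )\, \Y{j,F_2}_t\, E_{\vp(0)}
 = \bigl( \Y{j,F_1}_t E_{\vp(0)} \bigr)^* j_t( a ) \bigl( \Y{j,F_2}_t E_{\vp(0)} \bigr),
\]
each factor being a bounded operator since $\Y{j,F_1}$, $\Y{j,F_2}$ are bounded and $j_t(a) \in \Al \otol B(\Fock)$.

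The key step is then to substitute for the two outer factors using Corollary~\ref{F to F1}, which gives $\Y{j,F_i}_t E_{\vp(0)} = \Y{j,F_i'}_t E_{\vp(0)} = \Y{j,F_i''}_t E_{\vp(0)}$ for $i = 1, 2$, with $F_i'$ and $F_i''$ as in~\eqref{F2 G2} (these being the two instances of the corollary highlighted in the remark following it). Replacing the right-hand factor by $\Y{j,F_2'}_t E_{\vp(0)}$, and the left-hand factor by $\Y{j,F_1'}_t E_{\vp(0)}$ — the latter obtained by taking adjoints in the identity $\Y{j,F_1}_t E_{\vp(0)} = \Y{j,F_1'}_t E_{\vp(0)}$ — yields
\[
\mathbb{E}\bigl( \jFonetwo_t( a ) \bigr)
 = \bigl( \Y{j,F_1'}_t E_{\vp(0)} \bigr)^* j_t( a ) \bigl( \Y{j,F_2'}_t E_{\vp(0)} \bigr)
 = \mathbb{E}\bigl( j^{F_1', F_2'}_t( a ) \bigr),
\]
and the identical argument with $F_i''$ in place of $F_i'$ gives $\mathbb{E}\comp\jFonetwo_t = \mathbb{E}\comp j^{F_1'', F_2''}_t$. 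As $a \in \Al$ and $t \in \Rplus$ are arbitrary, the three expectation semigroups agree.

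Finally I would check that $j^{F_1', F_2'}$ and $j^{F_1'', F_2''}$ really are ultraweakly continuous QS cocycles, so that the statement is meaningful. Each of $F_i'$ and $F_i''$ has block-matrix form whose bottom-right entry is $w - 1$ for a contraction $w$ ($w = 0$, resp.\ $w = 1$) and whose top-right entry equals $-l^* w$; hence $F_i', F_i'' \in \CkA(\noise, \Al)$ by part~(b) of Corollary~\ref{cor CkA}, so $\Y{j,F_i'}$ and $\Y{j,F_i''}$ are quasicontractive — in particular bounded with locally uniform bounds — by Theorem~\ref{existence 2}(a). Corollary~\ref{jFG} then shows that $j^{F_1', F_2'}$ and $j^{F_1'', F_2''}$ are ultraweakly continuous and exponentially completely bounded, the corresponding statement for $\jFonetwo$ being part of the hypotheses. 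I do not expect a genuine obstacle here: the argument is essentially an unwinding of Corollary~\ref{F to F1}, and the only place demanding attention is the bookkeeping — making sure that the substitution from that corollary is applied to the correct outer factor, once directly and once after taking adjoints.
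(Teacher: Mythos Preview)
Your argument is correct and is essentially the paper's own proof: both invoke Corollary~\ref{cor CkA}(b) and Theorem~\ref{existence 2} (via Corollary~\ref{jFG}) to see that $j^{F_1',F_2'}$ and $j^{F_1'',F_2''}$ are ultraweakly continuous QS cocycles, and then Corollary~\ref{F to F1} to identify the vacuum-expectation maps. One small imprecision: your final clause ``the corresponding statement for $\jFonetwo$ being part of the hypotheses'' is not quite right --- the hypotheses only give boundedness of $\Y{j,F_i}$, not locally uniform bounds, so Corollary~\ref{jFG} does not directly apply to $\jFonetwo$; however this does not matter, since equality of $(\mathbb{E}\comp\jFonetwo_t)_{t\ges 0}$ with the expectation semigroup of the ultraweakly continuous cocycle $j^{F_1',F_2'}$ already makes the former an ultraweakly continuous semigroup.
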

\begin{proof}
By part (b) of Corollary~\ref{cor CkA}, the operators $F_1'$ and
$F_2' \in \CkA(\noise, \Al)$; thus Theorems~\ref{existence 2}
and~\ref{YjF is a right J-cocycle} imply that $\Y{j,F_1'}$ and
$\Y{j,F_2'}$ are quasicontractive multiplier cocycles for~$j$.
Therefore $j^{F_1', F_2'}$ is ultraweakly continuous, by
Corollary~\ref{jFG}, and so its expectation semigroup is too; by
Corollary~\ref{F to F1}, the expectation semigroups of $\jFonetwo$
and $j^{F_1', F_2'}$ are equal. Exactly the same argument applies
to~$j^{F_1'', F_2''}$; the result follows.
\end{proof}

\begin{rem}
In Section~\ref{section:FKformulae} we shall see how the
weak*-generator of the expectation semigroup of $\jFonetwo$ appears
as a perturbation of that of the free flow $j$ on the domain of its
QS derivative.
\end{rem}

\section{Characterisations of perturbation processes}
\label{section:character}

 In this section we give two results converse to
 Theorem~\ref{YjF is a right J-cocycle}.
 The first is a characterisation of perturbation processes
 whose adjoint process is strongly continuous.
 The second leads to a characterisation theorem for perturbation
 processes under the assumption that the free flow is Markov
 regular, simultaneously generalising results of Bradshaw and of
 Lindsay and Wills.

 Quasicontractive perturbation processes are strongly continuous,
 since they strongly satisfy a QS differential equation,
 and it is not hard to verify that,
 in the sense of~\eqref{Ft to F}, the perturbation process
 $\Y{j,F}$ has ultraweak stochastic derivative at $0$ equal to $F$.
 Conversely,
 recalling the injection described in Proposition~\ref{injection},
 we have the following.

 \begin{thm} \label{6.3*}
 Let $Y$ be a strongly continuous quasicontractive multiplier
 cocycle for $j$ whose adjoint process $Y^*$ is strongly continuous.
 Suppose that $Y$ has a bounded ultraweak stochastic derivative.
 Then
 $Y$ is a perturbation process.
 \end{thm}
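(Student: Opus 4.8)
Let $F \in B(\khat) \otol \Al$ be the bounded ultraweak stochastic derivative of $Y$, so that the limit \eqref{Ft to F} holds. The plan is to show that $Y = \Y{j,F}$ by exploiting the uniqueness clause of Theorem~\ref{existence*}: it suffices to prove that $Y$ strongly satisfies the quantum stochastic differential equation \eqref{QSDE W*} with coefficient $F$, since then $Y$ and $\Y{j,F}$ are both strong solutions of the same equation and hence coincide. Because $Y$ is strongly continuous and locally uniformly bounded (being quasicontractive), the process $\bigl(\wt{\jmath}_s(F)\wt{Y}_s\bigr)_{s\ges 0}$ is a bounded, strongly continuous, hence QS-integrable, process, so the candidate stochastic integral $X_t := I + \int_0^t \wt{\jmath}_s(F)\wt{Y}_s\,d\Lambda_s$ is a well-defined continuous process in $\Al$; the goal is to identify $X$ with $Y$.

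The key step is a \emph{differentiation of the cocycle identity}. Using the cocycle relation $Y_{s+h} = J_s(Y_h)Y_s$ together with Lemma~\ref{uw to SOT} (strong continuity of $T\mapsto J_s(T)$ on bounded sets and of $t\mapsto\wt{\jmath}_t(A)$), I would show that for $\zeta\in\init\otul\Exps$ the map $t\mapsto Y_t\zeta$ is weakly differentiable, with
\[
\tfrac{d}{dt}\ip{\zeta}{Y_t\zeta}
 = \ip{\gradhat_t\zeta}{\wt{\jmath}_t(F)\wt{Y}_t\gradhat_t\zeta}.
\]
The right-hand side here is exactly the integrand in the First Fundamental Formula for $X$. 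To produce this formula, write, for small $h>0$,
\[
h^{-1}\ip{\zeta}{(Y_{s+h}-Y_s)\zeta}
 = h^{-1}\ip{\zeta}{\bigl(J_s(Y_h)-J_s(I)\bigr)Y_s\zeta}
 = h^{-1}\ip{Y_s^*\zeta}{J_s(Y_h - I)Y_s\zeta},
\]
and then insert the vacuum-and-$V_h$ decomposition of $Y_h - I$ that appears in \eqref{Ft to F}: on exponential-vector matrix elements, adaptedness lets one replace $Y_h - I$ by its compression through $\bigl[\begin{smallmatrix} h^{-1/2}E^{\vp(0)}\\ V_h^*\end{smallmatrix}\bigr](Y_h-I)\bigl[\begin{smallmatrix}h^{-1/2}E_{\vp(0)} & V_h\end{smallmatrix}\bigr]$ against the appropriate amplified vectors, and then let $h\to 0$ using the ultraweak convergence to $F$ together with the strong continuity of $s\mapsto\wt{\jmath}_s(\,\cdot\,)$ and of $s\mapsto Y_s$. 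The strong continuity of $Y^*$ is needed precisely to control the $Y_s^*\zeta$ factor and to pass the limit through. This yields the First Fundamental Formula
\[
\ip{\zeta}{(Y_t - I)\zeta}
 = \int_0^t \ip{\gradhat_s\zeta}{\wt{\jmath}_s(F)\wt{Y}_s\gradhat_s\zeta}\,ds
 = \ip{\zeta}{(X_t - I)\zeta},
\]
and by polarisation $Y_t = X_t$ for all $t$, so $Y$ weakly — hence, by continuity, strongly — satisfies \eqref{QSDE W*}. Thus $Y = \Y{j,F}$, which is the assertion.

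The main obstacle I anticipate is the interchange of limits in the differentiation step: one must pass from the \emph{global} ultraweak limit \eqref{Ft to F} (a single limit as $h\to 0$ of an operator in $B(\khat)\otol\Al$) to the \emph{pointwise-in-$s$} statement needed for the integrand, uniformly enough in $s\in[0,t]$ to justify integrating the derivative. This is handled by working throughout at the level of fixed exponential-vector matrix elements $\omega_{u\vp(f),v\vp(g)}$, where the $V_h$-compression picks out the $\noise$-components and the $h^{-1/2}E^{\vp(0)}$-factor the scalar component, so that the convergence \eqref{Ft to F} tested against a fixed normal functional becomes an ordinary scalar limit; the strong continuity hypotheses on $Y$, $Y^*$ and $j$ then give a uniform dominating bound on $[0,t]$ and Dominated Convergence finishes the argument. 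A secondary technical point — that the compression identity respects adaptedness so that only the "$[0,h[$ part" of the shifted noise contributes — is exactly the content of the adaptedness condition in the definition of operator process combined with the explicit form \eqref{V isom} of $V_h$, and requires only routine bookkeeping with the tensor decomposition \eqref{decomp}.
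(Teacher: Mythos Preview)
The paper's proof takes a shorter, structurally different route. Rather than differentiating the multiplier-cocycle identity directly, it passes to the associated mapping cocycle $k := (j_t(\cdot)Y_t)_{t\ges 0}$ on $\Al$ (Proposition~\ref{injection}). The hypotheses on $Y$ translate into the hypotheses of Theorem~\ref{1.11} for $k$: normality and complete quasicontractivity are immediate, and strong continuity of $Y$ and of $Y^*$ give pointwise strong operator continuity of $k$ and of $k^\dagger$ respectively (since $j_t$ is *-homomorphic, $k^\dagger_t(a) = Y_t^* j_t(a)$). With $\phi$ the ultraweak stochastic derivative of $k$ from~\eqref{phi = uw}, the assumption~\eqref{Ft to F} says exactly that $1_\Al \in \Dom\phi$ with $\phi(1_\Al) = F$; Theorem~\ref{1.11}(a) then gives $Y_t = k_t(1_\Al) = I + \int_0^t \wt{k}_s\bigl(\phi(1_\Al)\bigr)\,d\Lambda_s = I + \int_0^t \wt{\jmath}_s(F)\wt{Y}_s\,d\Lambda_s$, and uniqueness in Theorem~\ref{existence*} finishes. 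The role of strong continuity of $Y^*$ here is solely to verify the $k^\dagger$-hypothesis of Theorem~\ref{1.11}, not to control any vector of the form $Y_s^*\zeta$.

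Your direct approach is not wrong in spirit --- carried through, it would amount to reproving the relevant instance of Theorem~\ref{1.11} by hand --- but the execution contains a concrete error: the step
\[
h^{-1}\ip{\zeta}{\bigl(J_s(Y_h)-J_s(I)\bigr)Y_s\zeta}
 = h^{-1}\ip{Y_s^*\zeta}{J_s(Y_h - I)Y_s\zeta}
\]
is false (a spurious $Y_s^*$ has been inserted on the left). Since your stated reason for needing strong continuity of $Y^*$ is ``to control the $Y_s^*\zeta$ factor'', this is not a typo but a misconception about where $Y^*$ enters. To make the direct differentiation work one must instead analyse how $J_s$ intertwines the $(E_{\vp(0)},V_h)$-compression with the shift to $[s,s+h[$ and with the flow applied to the $\Al$-tensorands, which is exactly the work packaged into Theorem~\ref{1.11}; there is no real economy in redoing it here.
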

 \begin{proof}
 Let $F$ be the stochastic derivative of $Y$ at $0$.
 The associated QS cocycle $k := ( j_t( \cdot ) Y_t )_{t\ges 0}$
 is normal and completely quasicontractive;
 moreover, $k$ and $k^\dagger$ are both pointwise strongly
 continuous.
 Since $k_t(1) = Y_t$ ($t\ges 0$),
 letting $\phi$ be the ultraweak stochastic derivative of $k$,
 as defined in~\eqref{phi = uw},
 \[
 \phi_t(1) =
 \begin{bmatrix}
 t^{-1/2} E^{\vp( 0 )} \\[1ex] V_t^*
\end{bmatrix}
\bigl( Y_t - I_{\init\ot\Fock} \bigr)
\begin{bmatrix}
 t^{-1 / 2} E_{\vp( 0 )} & V_t
\end{bmatrix}
 \overset{\uw}\longrightarrow F \ \text{ as } t \to 0^+.
 \]
 Thus $1 \in \Dom \phi$ and $\phi(1) = F$.
 Therefore, by Theorem~\ref{1.11},
 $k_t(1) = I + \int_0^t \wt{k}_s(\phi(1)) d\Lambda_s$,
 in other words
 $Y_t = I + \int_0^t \wt{\jmath}_s(F)\wt{Y}_s d\Lambda_s$
 ($t\ges 0$).
 The result now follows by uniqueness (in Theorem~\ref{existence*}).
 \end{proof}
 \begin{rem}
 Since the adjoint process of a quasicontractive perturbation
 process is strongly continuous when $\init$ and $\noise$ are
 separable,
 the above result gives a characterisation of such perturbation
 processes in the separable case.
 \end{rem}

 We next show that,
 under a H\"older-regularity assumption, every multiplier cocycle of
 the free flow $j$ satisfies a QS differential equation
 of the form~\eqref{QSDE W*}.

\begin{thm}\label{partial converse}
Let $Y$ be a locally uniformly bounded multiplier cocycle for~$j$
and suppose that the associated QS cocycle $k := ( j_t( \cdot ) Y_t
)_{t\ges 0}$ satisfies the condition
\begin{equation}\label{Holder again}
\limsup_{t\to 0^+} t^{-1 / 2} \bigl\{
 \norm{k_{t, \ve} - \iota^{\Al}_{\Fock, \ve}}_\cb +
 \norm{k^\dagger_{t, \ve} - \iota^{\Al}_{\Fock, \ve}}_\cb \bigr\} < \infty
\quad ( \ve \in \Exps ).
\end{equation}
Then $Y = \Y{j,F}$ for a unique operator $F \in B( \khat ) \otol
\Al$.
\end{thm}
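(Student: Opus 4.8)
The plan is to apply the amalgamation result, Theorem~\ref{amalgam}, to the associated QS cocycle $k = ( j_t( \cdot ) Y_t )_{t \ges 0}$ and then extract $F$ from its stochastic generator $\phi$. First I would observe that $k$ is a normal completely bounded QS cocycle (by Proposition~\ref{k is cocycle}, applied with the multiplier cocycle pair $Z = Y$ on the right and the trivial multiplier on the left, or more directly since $j_t( \cdot ) Y_t$ is visibly a process in $\Al$ satisfying the cocycle relations), and that its cb~norm is locally bounded because $Y$ is locally uniformly bounded and $j$ is a flow; moreover $k$ is adjointable with $k^\dagger_t( a ) = Y_t^* j_t( a^* )^* = Y_t^* j_t( a )$, so $k^\dagger_t( 1 ) = Y_t^*$. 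The hypothesis~\eqref{Holder again} is precisely the Hölder condition~\eqref{Holder} required for the converse direction of Theorem~\ref{amalgam}, so that theorem furnishes a unique $\phi \in CB( \Al; B( \khat ) \otol \Al )$ with $k = k^\phi$, i.e.\ $k$ strongly satisfies the QS differential equation $d k_t = \wt{k}_t \comp \phi \, d\Lambda_t$.

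The next step is to evaluate this QSDE at the identity element $1 \in \Al$. Since $j$ is unital, $k_t( 1 ) = j_t( 1 ) Y_t = Y_t$, so $Y$ strongly satisfies
\[
Y_0 = I_{\init \ot \Fock}, \quad d Y_t = \wt{k}_t\bigl( \phi( 1 ) \bigr) \, d\Lambda_t = \wt{\jmath}_t\bigl( \phi( 1 ) Y_t \bigr) \, d\Lambda_t,
\]
using $\wt{k}_t( A ) = \wt{\jmath}_t( A ) \wt{Y}_t$ for $A \in B( \khat ) \otol \Al$ — here one must unpack the definition of the ampliated process $\wt{k}$ and verify the intertwining identity $\wt{k}_t( A \ot I ) = \wt{\jmath}_t( A ) \wt{Y}_t$, which follows from $k_t( a ) = j_t( a ) Y_t$ by the same linearity-and-normality argument used in the proof of Proposition~\ref{k is cocycle}. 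Setting $F := \phi( 1 ) \in B( \khat ) \otol \Al$, this says exactly that $Y$ is a strong solution of~\eqref{QSDE W*}. By the uniqueness clause of Theorem~\ref{existence*}, $Y = \Y{j,F}$. Uniqueness of $F$ is then immediate from Proposition~\ref{Injectivity} (the map $F \mapsto \Y{j,F}$ is injective).

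The main obstacle is the intertwining identity $\wt{k}_t( A ) = \wt{\jmath}_t( A ) \wt{Y}_t$, and more precisely checking that the weak/strong solution of the QSDE for $k$ produced by Theorem~\ref{amalgam}, when specialised to $a = 1$, genuinely reduces to the scalar QSDE~\eqref{QSDE W*} for $Y$ with the correct coefficient process $\bigl( \wt{\jmath}_t( F ) \wt{Y}_t \bigr)_{t\ges 0}$ rather than some other QS-integrable process with the same first fundamental formula. This requires care with the precise form of the ampliation $\wt{k}$: one has $\wt{k}_t = \id_{B(\khat)} \otol k_t$, and $k_t = j_t( \cdot ) Y_t$, so for a simple tensor $A = B \ot a$ with $B \in B(\khat)$, $a \in \Al$ one computes $\wt{k}_t( B \ot a ) = B \ot j_t( a ) Y_t = \wt{\jmath}_t( B \ot a )\, \wt{Y}_t$, and the general case follows by linearity and ultraweak continuity of both sides. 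Once this is in hand, everything else is a direct appeal to the cited results, with the only remaining subtlety being that the QS-integrability of $\bigl( \wt{k}_s( \phi( 1 ) ) \bigr)_{s\ges 0}$ — guaranteed by the ``strong solution'' conclusion of Theorem~\ref{amalgam} — transfers to QS-integrability of $\bigl( \wt{\jmath}_s( F ) \wt{Y}_s \bigr)_{s\ges 0}$, which is what is needed to invoke uniqueness in Theorem~\ref{existence*}.
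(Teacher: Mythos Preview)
Your proposal is correct and follows essentially the same route as the paper's proof: apply Theorem~\ref{amalgam} to the associated cocycle $k$ to obtain $k = k^\phi$, evaluate at $1_\Al$ using $k_t(1_\Al) = Y_t$ to see that $Y$ strongly satisfies~\eqref{QSDE W*} with $F = \phi(1_\Al)$, and conclude via the uniqueness in Theorem~\ref{existence*} together with Proposition~\ref{Injectivity}. (One notational slip: the displayed identity should read $\wt{\jmath}_t\bigl( \phi(1) \bigr)\, \wt{Y}_t$ rather than $\wt{\jmath}_t\bigl( \phi(1) Y_t \bigr)$, as your subsequent intertwining identity makes clear.)
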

\begin{proof}
As $k$ is completely bounded with locally bounded cb norm,
 $k = k^\phi$ for a completely
bounded map~$\phi : \Al \to \Al \otol B( \khat )$, by
Theorem~\ref{amalgam}. Since $Y_t = k_t( 1_\Al )$, it follows that
$Y$ strongly satisfies the QS differential equation~\eqref{QSDE W*},
with $F = \phi(1_\Al)$, and so $Y = \Y{j,F}$ by uniqueness.
Uniqueness of~$F$ follows from Proposition~\ref{Injectivity}.
\end{proof}

\begin{rems}
When the free flow $j$ is Markov regular, \eqref{Holder again} is
equivalent to
\begin{equation}\label{Y Holder}
\limsup_{t\to 0^+} t^{-1/2} \norm{( Y_t - I ) E_{\ve} } < \infty
\text{ and } \limsup_{t\to 0^+} t^{-1/2} \norm{( Y^*_t - I )
E_{\ve}} < \infty \ \ ( \ve \in \Exps ).
\end{equation}
This is because, in this case, $j$ itself satisfies the
inequality~\eqref{Holder again}.

The first of the conditions in~\eqref{Y Holder} is necessary for~$Y$
to be of the form~$\Y{j,F}$; the second is too if~$Y$ is coisometric
or if~$Y$ is locally uniformly bounded and the spaces~$\init$
and~$\noise$ are separable, by Theorem~\ref{existence*}.
\end{rems}

\begin{thm}\label{Y under MR}
Let $Y$ be a quasicontractive multiplier cocycle for the free
flow~$j$ and suppose that $j$ is Markov regular. The following are
equivalent\tu{:}
\begin{rlist}
\item $Y = \Y{j,F}$ for an operator $F \in B( \khat )\otol \Al$\tu{;}
\item the associated QS cocycle $k := ( j_t( \cdot ) Y_t )_{t \ges 0}$
is Markov regular.
\end{rlist}
In this case $F = \psi( 1 )$, where $\psi$ is the stochastic
generator of~$k$.
\end{thm}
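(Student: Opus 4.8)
The plan is to prove the two implications separately, with the harder direction being (ii) $\Rightarrow$ (i). For the easy direction (i) $\Rightarrow$ (ii): if $Y = \Y{j,F}$ for some $F \in B(\khat) \otol \Al$, then by Theorem~\ref{existence*} the process $Y$ is strongly continuous with the stated H\"older estimate, and since $Y$ is quasicontractive, so too is $Y^*$; under the separability-free Markov-regularity assumption on $j$, Proposition~\ref{time dep Holder proposition} (applied with $G = \wt{\jmath}(F)$) gives the corresponding H\"older estimate for $Y^*$. Combined with the Markov regularity of $j$, this yields condition~\eqref{Holder again} for $k = (j_t(\cdot) Y_t)_{t\ges 0}$, exactly as noted in the remarks after Theorem~\ref{partial converse}. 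Hence $k = k^\psi$ for some completely bounded $\psi$ by Theorem~\ref{amalgam}; since the associated semigroups of $k^\psi$ are governed by bounded generators ($\psi$ being bounded), $k$ is Markov regular, giving (ii).

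For (ii) $\Rightarrow$ (i): assume $k$ is Markov regular. First I would verify that $k$ is a normal completely bounded QS cocycle with locally bounded cb norm---this follows from Proposition~\ref{injection} together with the local uniform boundedness of $Y$ (quasicontractive multiplier cocycles are strongly continuous hence locally uniformly bounded, and $j$ is normal and ultraweakly continuous). The goal is then to apply the converse half of Theorem~\ref{amalgam}, which requires the H\"older condition~\eqref{Holder}. The key point is that when $j$ is Markov regular, the hypothesis of Theorem~\ref{partial converse}, namely~\eqref{Holder again}, is equivalent to the pair of scalar H\"older conditions~\eqref{Y Holder} on $Y$ and $Y^*$ (because $j$ itself satisfies~\eqref{Holder again}). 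So the task reduces to showing that Markov regularity of $k$ forces~\eqref{Y Holder}.

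The main obstacle, and the crux of the argument, is deriving the H\"older estimates~\eqref{Y Holder} from the Markov regularity of $k$. Here the plan is to exploit the relation between Markov regularity and the existence of a bounded ultraweak stochastic derivative. Markov regularity of $k$ means each associated semigroup $\mathcal{Q}^{c,d}$ of $k$ is norm continuous, hence its generator is bounded. One approach is: under Markov regularity the cocycle $k$---being completely quasicontractive (by Corollary~\ref{jFG} with $F_1 = F_2$... but we do not yet have $F$; rather use that $k$ is completely contractive up to exponential scaling since $Y$ is quasicontractive and $j$ is a flow)---has $1 \in \Dom\phi$ with $\phi$ the ultraweak stochastic derivative, and moreover the stronger H\"older property~\eqref{Holder} holds. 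Concretely, Markov regularity combined with quasicontractivity and the equivalence recorded in the remarks after Theorem~\ref{amalgam} (``the condition~\eqref{Holder} \ldots [is] equivalent [to Markov regularity] when $k$ is a completely positive QS cocycle which is quasicontractive'') would directly give~\eqref{Holder}, provided $k$ is completely positive---which it is when we write it as $j^{F}$ with $Y$ on both sides, but here $k = j_t(\cdot)Y_t$ is not manifestly completely positive. To handle the general (not completely positive) $k$, I would instead pass to $k^\dagger k$ or use the device of considering $j_t(\cdot) Y_t$ alongside $Y_t^* j_t(\cdot)$ and invoke the polarisation/Schwarz-type bounds; alternatively, observe that $k$ being Markov regular forces each $\kappa^{c,d}_t$ to have a bounded generator, and feeding this into the quantum regression / reconstruction argument of Theorem~\ref{1.11} yields $1 \in \Dom\phi$ with $\phi(1) =: F \in B(\khat)\otol\Al$ bounded.

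Once $F := \phi(1) \in B(\khat)\otol\Al$ is in hand, the conclusion follows quickly: by Theorem~\ref{1.11}(a) (or Theorem~\ref{XYZ}(a) in the completely positive case), $k$ strongly satisfies the QSDE $dk_t = \wt{k}_t \comp \phi \, d\Lambda_t$ on $\Dom\phi \ni 1$, so evaluating at $1$ gives
\[
Y_t = k_t(1) = I_{\init\ot\Fock} + \int_0^t \wt{k}_s(\phi(1)) \, d\Lambda_s = I_{\init\ot\Fock} + \int_0^t \wt{\jmath}_s(F) \wt{Y}_s \, d\Lambda_s \quad (t \ges 0),
\]
using $\wt{k}_s(\phi(1)) = \wt{\jmath}_s(F)\wt{Y}_s$ which comes from the definition $k_s = j_s(\cdot)Y_s$ and the quantum It\^o product formula (as in~\eqref{the map psi}). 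Hence $Y$ strongly satisfies~\eqref{QSDE W*}, so $Y = \Y{j,F}$ by the uniqueness part of Theorem~\ref{existence*}, proving (i). Finally, the last sentence of the statement---$F = \psi(1)$ where $\psi$ is the stochastic generator of $k$---is immediate, since under (i) we have $k = k^\psi$ with $\psi(1) = F$ by the computation just given, and $\psi$ is unique by Theorem~\ref{amalgam}.
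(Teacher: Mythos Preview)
Your proposal has a genuine gap in the direction (ii) $\Rightarrow$ (i). You correctly identify that the task reduces to establishing the H\"older estimates~\eqref{Y Holder} on $Y$ and $Y^*$, but you never produce a working argument for this. Your first idea---using the equivalence of~\eqref{Holder} with Markov regularity for completely positive quasicontractive cocycles---fails, as you note, because $k = j_t(\cdot)Y_t$ is not completely positive. Your fallback via Theorem~\ref{1.11} is circular: that theorem requires both $k$ and $k^\dagger$ to be pointwise strong-operator continuous, which amounts to strong continuity of $Y$ and $Y^*$ on $\init\ot\Fock$, and this is precisely what is not known a priori for a general multiplier cocycle (it is \emph{not} a consequence of quasicontractivity alone). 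Moreover, Markov regularity of $k$ tells you the generators $\tau_{c,d}$ are bounded, but does not by itself place $1$ in $\Dom\phi$, which concerns the ultraweak limit of the full matrix $\phi_t(1)$.

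The key idea you are missing is an elementary inequality. After rescaling so that $Y$ is contractive (replacing $Y_t$ by $e^{-\beta t}Y_t$, which corresponds to replacing $F$ by $F - \beta\Delta^\perp$), one has, for any $\zeta$,
\[
\norm{(Y_t - I)\zeta}^2 + \norm{(Y_t^* - I)\zeta}^2 \les 4\,\re\,\ip{\zeta}{(I - Y_t)\zeta},
\]
simply because $\norm{Y_t\zeta}\les\norm{\zeta}$ and $\norm{Y_t^*\zeta}\les\norm{\zeta}$. Taking $\zeta = u\vp(f)$ with $t$ less than the first jump of $f$ and $c = f(0)$, the right-hand side equals $4\,\re\,\ip{u}{(1 - \mathcal{Q}^{c,c}_t(1))u}$, where $\mathcal{Q}^{c,c}$ is the $(c,c)$-associated semigroup of $k$. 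Markov regularity makes $\mathcal{Q}^{c,c}$ norm differentiable, so this is $O(t)$, yielding~\eqref{Y Holder} directly. Theorem~\ref{partial converse} then finishes the job. For (i) $\Rightarrow$ (ii), your route via H\"older estimates also has a gap: invoking Proposition~\ref{time dep Holder proposition} requires measurability of $\wt{Y}^*\wt{\jmath}(F^*)$, which is only automatic under separability or coisometry (see the remark following that proposition). The paper avoids this by observing directly (via the quantum It\^o product formula, as in the proof of Proposition~\ref{coisometry under MR}) that $k = k^\psi$ with $\psi$ the completely bounded map~\eqref{the map psi}, whence Markov regularity of $k$ is immediate.
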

\begin{proof}
If (i) holds then, as shown in the proof of
Proposition~\ref{coisometry under MR}, $k = k^\psi$ for the completely
bounded map~\eqref{the map psi} and so (ii) holds. Conversely,
suppose that (ii) holds. Recall that $e^{\beta t} Y^F_t = Y^G_t$,
where $G = F + \beta \Delta^\perp$. Therefore, replacing $Y$ by
$( e^{-\beta t} Y_t )_{t \ges 0}$ for a suitable $\beta \in \Real$, we
may assume without loss of generality that $Y$ is contractive. Fixing
$u \in \init$ and $f \in \Step$, let $\mathcal{Q}$ be the
$( c, c )$-associated semigroup of $k$, as defined in~\eqref{assoc},
where $c = f( 0 )$. Then, by the contractivity of $Y$, when $t$ is
less than any point of discontinuity of the step function $f$,
\begin{align*}
 \norm{( Y_t - I ) u \vp( f )}^2& +
 \norm{( Y^*_t - I ) u \vp( f )}^2 \\
& \les 4 \re \ip{u \vp( f )}{( I - Y_t ) u \vp( f )} \\
& = 4
 \re \ip{u}{\bigl( 1 - \mathcal{Q}_t( 1 ) \bigr) u}
\les 4\, \norm{ \id_\Al - \mathcal{Q}_t } \, \norm{u}^2.
\end{align*}
As $k$ is Markov regular, $\mathcal{Q}$ is norm continuous and thus
norm differentiable.
 Therefore~\eqref{Y Holder} holds, and the result follows
from Theorem~\ref{partial converse} and the first remark following
it.
\end{proof}

\begin{rem}
This result generalises both Theorem~6.7 of~\cite{LWjfa} (the case
where $j$ is trivial, $Y$ is contractive, $\Al = B( \init )$ and
$\noise$ is separable) and the main result of~\cite{Bra} (the case
where $Y$ is unitary, $\init$ is separable and $\noise$ has one
dimension). Bradshaw uses the quantum martingale representation
theorem (\cite{PS1}); his proof does not extend beyond the case of
unitary~$Y$ and separable~$\init$.
\end{rem}

\section{Perturbations of flows}\label{section:pert flows}

In~\cite{LiS} and~\cite{BP} the free flow $j$ is given by
\[
j_t = \al_{B_t} \quad ( t \in \Rplus ),
\]
where $( \al_s )_{s \in \Real}$ is a normal *-automorphism group on
$\Al$ and $( B_t )_{t \ges 0}$ is a standard one-dimensional
Brownian motion. In this case $j$ is governed by the classical
stochastic differential equation
\[
d j_t = j_t \comp \delta \, d B_t
+ \mbox{$\frac12$} j_t \comp \delta^2 d t,
\]
where $\delta$ is the derivation generating $\al$. In these papers
it is further assumed that the automorphism group, and so also the
free flow, is unitarily implemented. Here we generalise to consider
a genuinely QS flow $j$, which is neither assumed to be unitarily
implemented nor assumed to be driven by a classical Brownian motion.

Let $\theta$ be the ultraweak stochastic derivative of the free
flow~$j$ given by Theorem~\ref{XYZ}, and let $\tau$ be the ultraweak
generator of the expectation semigroup of $j$. Then $\Dom \theta$ is
a subspace of $\Al$ contained in $\Dom \tau$,
 $1_{\Al} \in \Dom \theta$ with $\theta( 1_{\Al} ) = 0$ and the map $\theta$ is real
(that is, $\Dom \theta$ is *-invariant and $\theta( x )^* = \theta(
x^* )$ for all $x \in \Dom \theta$). By the quantum It\^o product
formula,
\begin{equation}\label{theta structure}
\theta( x^* y ) = \theta( x )^* \iota( y ) + %
\iota( x )^* \theta( y ) + \theta( x )^* \Delta \theta( y ),
\end{equation}
where $\iota = \iota^{\Al}_{\khat}$, for all
$x$, $y \in \Dom \theta$ such that $x^* y \in \Dom \theta$. In terms
of the block-matrix form
$\left[\begin{smallmatrix}
 \mathcal{L} & \delta^\dagger \\[0.5ex]
 \delta & \pi - \iota
\end{smallmatrix}\right]$
 of $\theta$,
 in which $\iota = \iota_\noise^\Al$,
 the structure relations~\eqref{theta structure} are equivalent to
 the conditions
\begin{align*}
 \pi( x^* y ) & = \pi( x )^* \pi( y ), \\
 \delta( x^* y ) & = \delta( x )^* y + \pi( x )^* \delta( y ) \\
 \text{and} \quad \mathcal{L}( x^* y ) & =
 \mathcal{L}( x )^* y + x^*\mathcal{L}( y ) + \delta( x )^* \delta( y )
\end{align*}
for all $x$, $y \in \Dom \theta$ such that $x^* y \in\Dom \theta$.

\begin{thm}\label{bounded perturbation}
Let $( j, \theta )$ be as above, let $F_1$, $F_2 \in B( \khat )
\otol \Al$ and suppose that the perturbation process~$\Y{j,F_1}$ is
bounded. Then the mapping process~$\jFonetwo$, defined by~\eqref{j F
G}, weakly satisfies the QS differential equation
\begin{equation}\label{k psi}
  k_0 = \iota, \quad
d k_t = \wt{k}_t \comp \phi \, d\Lambda_t,
\end{equation}
where $\Dom \phi = \Dom \theta$, $\iota = \iota^\Al_\Fock$ and, for
all $x \in \Dom \phi$,
\[
\phi( x ) =
\theta( x ) +
 F_1^* \bigl( \Delta \theta( x ) + \iota( x ) \bigr) +
 F_1^* \Delta \bigl( \theta( x ) + \iota( x ) \bigr) \Delta F_2 +
 \bigl( \theta( x ) \Delta +\iota( x ) \bigr) F_2.
\]
If the adjoint process $\bigl( ( \Y{j,F_1}_t )^* \bigr)_{t \ges 0}$ is
strongly continuous on $\init \ot \Fock$ then~$\jFonetwo$
satisfies~\eqref{k psi} strongly.
\end{thm}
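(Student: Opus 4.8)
The plan is to derive the equation for $\jFonetwo$ from the quantum It\^o product formula, applied twice after writing the triple product $k_t(a):=(\Y{j,F_1}_t)^*\,j_t(a)\,\Y{j,F_2}_t$ as $(\Y{j,F_1}_t)^*\comp\ell_t(a)$, where $\ell_t(a):=j_t(a)\,\Y{j,F_2}_t$. Abbreviate $Y_i:=\Y{j,F_i}$. Three strong quantum stochastic differential equations are at hand: $d j_t=\wt{\jmath}_t\comp\theta\,d\Lambda_t$ on $\Dom\theta$, by Theorem~\ref{XYZ}(a) (the flow $j$, being a unital *-homomorphism, is contractive, hence quasicontractive), and $d Y_{i,t}=\wt{\jmath}_t(F_i)\,\wt{Y}_{i,t}\,d\Lambda_t$, by Theorem~\ref{existence*}; the latter also gives $Y_{2,s}E_\ve\in\Al\otol\ket{\Fock}$ and the continuity of $Y_1$, which with its boundedness yields locally uniform bounds. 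Also $k_0=\iotaAFock$. The following facts are used repeatedly: $\wt{\jmath}_t$ is a unital *-homomorphism, so $\wt{\jmath}_t(\Delta)=\Delta$ (in ampliated form, by unitality of $j$) and $\wt{\jmath}_t(\theta(a^*))^*=\wt{\jmath}_t(\theta(a))$ (by reality of $\theta$, using *-invariance of $\Dom\theta$); $\iota(a):=I_{\khat}\ot a$ commutes with $\Delta$, and $\Delta^2=\Delta$; and $\wt{k}_s(T)=\wt{Y}_{1,s}^*\,\wt{\jmath}_s(T)\,\wt{Y}_{2,s}$ for $T\in B(\khat)\otol\Al$.

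\emph{First stage.} Here $\ell$ is shown to be a continuous process with $\ell_t(a)=a+\int_0^t\wt{\jmath}_s(\psi_2(a))\,\wt{Y}_{2,s}\,d\Lambda_s$ on $\init\otul\Exps$, for all $a\in\Dom\theta$, where $\psi_2(a):=\theta(a)+\bigl(\theta(a)\Delta+\iota(a)\bigr)F_2$. Since $j_t(a)$ is bounded, $\ip{\zeta'}{(\ell_t(a)-a)\zeta}=\ip{j_t(a^*)\zeta'}{Y_{2,t}\zeta}-\ip{a^*\zeta'}{\zeta}$ for $\zeta',\zeta\in\init\otul\Exps$ and $a\in\Dom\theta$ (whence $a^*\in\Dom\theta$); this is evaluated by the polarised form of the Second Fundamental Formula~\eqref{SFF} applied to the quantum stochastic integrals $j(a^*)$ and $Y_2$, the coefficient $\wt{\jmath}(F_2)\,\wt{Y}_2$ being QS-integrable because it is continuous, by Lemma~\ref{uw to SOT} and the continuity of $Y_2$. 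The three resulting terms collapse, via the facts above, to the single integrand $\ip{\gradhat_s\zeta'}{\wt{\jmath}_s(\psi_2(a))\,\wt{Y}_{2,s}\,\gradhat_s\zeta}$; as this coefficient is also QS-integrable, comparison with the First Fundamental Formula~\eqref{FFF} and density of $\init\otul\Exps$ yield the displayed identity, whence $\ell(a)$ is continuous.

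\emph{Second stage.} Pairing the bounded continuous quantum stochastic integral $Y_1$ with the continuous quantum stochastic integral $\ell(a)$ just produced, the polarised Second Fundamental Formula together with $\ip{\zeta}{k_t(a)\zeta'}=\ip{Y_{1,t}\zeta}{\ell_t(a)\zeta'}$ (valid since $Y_{1,t}$ is bounded) gives $\ip{\zeta}{(k_t(a)-k_0(a))\zeta'}=\int_0^t\ip{\gradhat_s\zeta}{\wt{k}_s(\Psi(a))\,\gradhat_s\zeta'}\,ds$, where $\Psi(a):=\psi_2(a)+F_1^*\iota(a)+F_1^*\Delta\psi_2(a)$. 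A short computation with $[\iota(a),\Delta]=0$ and $\Delta^2=\Delta$ identifies $\Psi(a)$ with $\phi(a)$, so $\jFonetwo$ weakly satisfies~\eqref{k psi} on $\Dom\phi=\Dom\theta$.

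For the strong statement, suppose in addition that $Y_1^*=\bigl((\Y{j,F_1}_t)^*\bigr)_{t\ges 0}$ is strongly continuous on $\init\ot\Fock$. Then $Y_1^*$ is a continuous adapted process, hence a quantum stochastic integral with coefficient $\wt{Y}_1^*\,\wt{\jmath}(F_1^*)$, so $k(a)=Y_1^*\comp\ell(a)$ is a genuine product of quantum stochastic integrals; the quantum It\^o product formula~\eqref{QIPF} (polarised) now applies directly, its combined coefficient being the QS-integrable process $\wt{k}(\phi(a))$, and one obtains $k_t(a)=a+\int_0^t\wt{k}_s(\phi(a))\,d\Lambda_s$ --- the strong form of~\eqref{k psi}. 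I expect the principal obstacle to be precisely the weak regularity of the outer factors: $\Y{j,F_2}$ need not be bounded, and $(\Y{j,F_1}_t)^*$ need not be strongly continuous in the weak case, so one cannot invoke~\eqref{QIPF} in a single step (its operator form effectively requiring boundedness of both factors), and must instead work through the Fundamental Formulas and their polarisations, staging the calculation so that at each step one factor is bounded and the other a continuous quantum stochastic integral. The remaining labour --- reassembling two batches of three terms into $\phi(a)$ --- is routine given the algebraic facts collected above.
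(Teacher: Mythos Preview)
Your proof is correct and follows essentially the same route as the paper's: a two-stage application of the polarised Second Fundamental Formula to the triple product. The only difference is cosmetic --- you group $j_t(a)\,\Y{j,F_2}_t$ first and then pair with $\Y{j,F_1}_t$, whereas the paper groups $(\Y{j,F_1}_t)^*\,j_t(\cdot)$ first and then applies the polarised Second Fundamental Formula with $\Y{j,F_2}$; for the strong statement the paper simply observes that strong continuity of $(\Y{j,F_1})^*$ renders the coefficient $\wt{k}\bigl(\phi(x)\bigr)$ QS-integrable, so the weak solution is automatically strong, rather than re-running the product formula as you do. Your grouping is arguably the cleaner of the two, since at each stage the factor moved across the inner product ($j_t(a)$, then $\Y{j,F_1}_t$) is manifestly bounded under the stated hypothesis.
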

\begin{proof}
The first part is a straightforward consequence of
 the quantum It\^o product formula \eqref{QIPF} and
 the Second Fundamental Formula \eqref{SFF},
 each in their polarised form,
 applied respectively to
 $j_t(x) \Y{j,F_2}_t = j_t(x^*)^* \Y{j,F_2}_t$
 and
 $(\Y{j,F_1}_t)^* j_t(x) \Y{j,F_2}_t$.
 If $\bigl( ( \Y{j,F_1}_t )^* \bigr)_{t \ges 0}$ is
strongly continuous on~$\init \ot \Fock$ then, setting $k =
\jFonetwo$, the process $\wt{k}\bigl( \phi( x ) \bigr)$ is QS
integrable for each $x \in \Dom \phi$ and so the second part follows
too.
\end{proof}

\begin{rems}
With regard to the hypotheses, recall that if $F \in \CkA(\noise,
\Al)$ then $\Y{j,F}$ is quasicontractive, by Theorem~\ref{existence
2}; if, in addition, $\init$ and $\noise$ are separable then $(
\Y{j,F} )^*$ is necessarily strongly continuous on $\init \ot
\Fock$, by Theorem~\ref{existence*}.

If $F_2 = F_1 = F$, $q(F) = 0$ and $\Y{j,F}$ is coisometric (for
which the necessary condition $q(F^*) = 0$ is also sufficient when
$j$ is Markov regular) then
 $\Y{j,F}$ is unitary, so $\jF$ is a QS flow on $\Al$.

If $F_1$ and $F_2$ have block-matrix forms
\[
 \begin{bmatrix}
 k_1 & m_1 \\[1ex]
 l_1 & w_1 - 1
\end{bmatrix} \quad \mbox{and} \quad
 \begin{bmatrix}
 k_2 & m_2 \\[1ex]
 l_2 & w_2 - 1
\end{bmatrix},
\]
respectively, then $\phi( x )$ has block matrix form
\[
\begin{bmatrix}
 \tinymaths{\mathcal{L}( x ) +
 l_1^* \delta( x ) + l_1^* \pi( x ) l_2 + \delta^\dagger( x ) l_2 +
 k_1^* x + x k_2}
 & \tinymaths{\bigl( \delta^\dagger( x )
  + l_1^* \pi( x ) \bigr) w_2 + x m_2} \\[1ex]
 \tinymaths{w^*_1 \bigl( \delta( x ) + \pi( x ) l_2 \bigr) + m_1^* x}
 & \tinymaths{w_1^* \pi( x ) w_2 - x \ot 1}
\end{bmatrix}.
\]
\end{rems}

\section{Feynman--Kac formulae}\label{section:FKformulae}

The cocycle-perturbation theorem of the previous section yields a
general form of quantum Feynman--Kac formula. Let $\mathcal{P}^0$ be
a Markov semigroup on~$\Al$
 with weak*-generator $\tau$.
 Suppose that $\mathcal{P}^0$ is
 realised as the expectation semigroup of a QS flow $j$, in the sense
that $\mathcal{P}^0_t = \mathbb{E} \comp j_t$ for all $t \in
\Rplus$, with $\mathbb{E}$ the vacuum expectation defined
in~\eqref{vacuum}. Let $\theta = \left[\begin{smallmatrix}
 \mathcal{L} & \delta^\dagger \\[0.5ex]
 \delta & \pi - \iota
\end{smallmatrix}\right]$
be the QS derivative of $j$, as in Theorem~\ref{XYZ}.
 Thus $\mathcal{L} \subset \tau$ and $\iota = \iotaAnoise$.

\begin{thm}\label{Feynman-Kac theorem}
Let $( j, \theta )$ be as above, and let $l_1$, $l_2 \in
\ket{\noise} \otol \Al$ and $k_1$, $k_2 \in \Al$. Then there is an
ultraweakly continuous normal completely quasicontractive semigroup
$\mathcal{P}$ on $\Al$ whose weak*-generator is an extension of the
operator with domain $\Dom \theta$ given by the prescription
\begin{equation}
\label{gen P}
 x \mapsto \mathcal{L}( x ) +
 l_1^* \delta( x ) +
 l_1^*\pi( x ) l_2 +
 \delta^\dagger( x ) l_2 +
 k_1^* x + x k_2.
\end{equation}
 Moreover, $\mathcal{P}$ is
\begin{alist}
\item contractive if $k_1^* + k_1 + l_1^*l_1 \les 0$ and
$k_2^* + k_2 + l_2^*l_2 \les 0$\tu{;}
\item unital if $k_1^* + l_1^* l_2 + k_2 = 0$\tu{;}
\item completely positive if $l_1 = l_2$ and $k_1 = k_2$.
\end{alist}
\end{thm}
\begin{proof}
Let
\[
F_1 =
\begin{bmatrix}
 k_1 & -l^*_1 \\[1ex]
 l_1 & 0
\end{bmatrix}
\quad \text{and} \quad
F_2 =
\begin{bmatrix}
 k_2 & -l^*_2 \\[1ex]
 l_2 & 0
\end{bmatrix}.
\]
By Corollary~\ref{cor CkA}, $F_1$, $F_2 \in \CkA(\noise, \Al)$ and
so, by Corollary~\ref{jFG}, $\jFonetwo$ is an ultraweakly continuous
normal completely quasicontractive QS cocycle; let~$\mathcal{P}$ be
its expectation semigroup and let $\phi$ be the map defined in
Theorem~\ref{bounded perturbation}. Then, for $x \in \Dom \theta$,
 $E^{\wh{0}} \phi(x) E_{\wh{0}}$ equals~\eqref{gen P},
 so the first part follows from Theorem~\ref{1.11}.
 Moreover,
 $\jFonetwo$ is unital if
$\phi( 1_\Al ) = 0$, completely positive if $F_1 = F_2$ and
contractive if $q(F_1) \les 0$ and $q(F_2) \les 0$, by
Theorem~\ref{existence 2}. Since
\[
\phi( 1_\Al ) = \Delta^\perp \ot ( k_1^* + l_1^* l_2 + k_2 )
\quad \text{and} \quad
q( F_i ) = \Delta^\perp \ot ( k_i^* + k_i + l_i^*l_i )
\]
for $i = 1,2$, the result follows from Theorem~\ref{bounded
perturbation}.
\end{proof}

\begin{rems}
The term $k^*_1 x + x k_2$ contributes a bounded perturbation which
can alternatively be realised via the Trotter product formula; see,
for example,~\cite{EBD}.

The \emph{domain algebra} of $\mathcal{P}^0$ is the largest unital
*-subalgebra of $\Al$ contained in $\Dom \mathcal{L}$, whose existence
follows from an argument using the Kuratowski--Zorn Lemma. Note that
the domain algebra is contained in the selfadjoint unital subspace
$\calDzerozerotwo$ of $\Al$ defined in~\eqref{D00two}. There are
interesting examples in which the domain algebra fails to be
ultraweakly dense in $\Al$, both commutative (\cite{F2}) and
noncommutative (\cite{ArvDomain}), and interesting examples where it
is ultraweakly dense (\cite{ArvDomain}). In the latter case $\Dom
\theta$ is ultraweakly dense, by Theorem~\ref{XYZ}. When $\Al = B(
\init )$ and $\init$ is separable, $\calDzerozerotwo$ is an algebra
 (\cite{ArvDomain}) and therefore equals the domain algebra
of~$\mathcal{P}^0$.
\end{rems}

Thus any QS flow $j$ dilating $\mathcal{P}^0$, in the sense that
$\mathcal{P}^0 = \big( \mathbb{E} \comp j_t \big)_{t\ges 0}$, and
any $F_1$, $F_2 \in B( \khat ) \otol \Al$ such that $\Y{j,F_1}$ is
bounded, give rise to a semigroup~$\mathcal{P}$ whose generator is a
noncommutative vector field-type perturbation of the generator of
$\mathcal{P}^0$, through the quantum Feynman--Kac formula
\[
\mathcal{P}_t = \mathbb{E}\Bigl[
 ( \Y{j,F_1}_t )^* j_t( \cdot ) \Y{j,F_2}_t
\Bigr] \quad ( t \in \Rplus ).
\]
In terms of the perturbation generators $F_1$ and $F_2$, the
semigroup~$\mathcal{P}$ depends only on $F_1 \Delta^\perp$ and
$F_2 \Delta^\perp$.

Specialising to Markov semigroups we have the following result, which
considerably extends the class of Feynman--Kac formulae obtained
in~\cite{BP}, where the free flow is obtained from an automorphism
group of $\Al$ randomised by a classical Brownian motion.

 \begin{cor} \label{FK cpu}
Let $( j, \theta )$ be as above, and let $l \in \ket{\noise} \otol
\Al$ and $h = h^* \in \Al$. There is a completely positive Markov
semigroup $\mathcal{P}$ on $\Al$ whose ultraweak generator is an
extension of the operator with domain $\Dom \theta$ given by the
prescription
\[
x \mapsto \mathcal{L}( x ) + l^* \delta( x ) + l^* \pi( x ) l + %
\delta^\dagger( x ) l + k^* x + x k,
\]
where $k := i h - \frac{1}{2} l^* l$.
\end{cor}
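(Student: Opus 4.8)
The plan is to obtain this as the special case of Theorem~\ref{Feynman-Kac theorem} in which $l_1 = l_2 = l$ and $k_1 = k_2 = k := i h - \frac12 l^* l$. First I would note that, with these choices, the prescription
\[
 x \mapsto \mathcal{L}( x ) + l_1^* \delta( x ) + l_1^* \pi( x ) l_2 + \delta^\dagger( x ) l_2 + k_1^* x + x k_2
\]
appearing in Theorem~\ref{Feynman-Kac theorem} coincides exactly with the operator on $\Dom \theta$ displayed in the corollary. Thus the semigroup $\mathcal{P}$ supplied by that theorem is ultraweakly continuous, normal and completely quasicontractive, and its ultraweak generator extends the required operator; all that remains is to verify the three algebraic side-conditions of Theorem~\ref{Feynman-Kac theorem}.

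The decisive observation is the identity $k^* + k + l^* l = 0$, which holds precisely because $h$ is selfadjoint: indeed $k^* + k = ( i h - \frac12 l^* l )^* + ( i h - \frac12 l^* l ) = - l^* l$. This single identity furnishes both conditions in part~(a) of Theorem~\ref{Feynman-Kac theorem}, since $k_1^* + k_1 + l_1^* l_1 = k_2^* + k_2 + l_2^* l_2 = 0 \les 0$, so that $\mathcal{P}$ is contractive; and it furnishes condition~(b), since $k_1^* + l_1^* l_2 + k_2 = k^* + l^* l + k = 0$, so that $\mathcal{P}$ is unital. Condition~(c) is immediate, as $l_1 = l_2$ and $k_1 = k_2$ by construction, so $\mathcal{P}$ is completely positive.

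Finally I would record that a completely positive, unital, normal, ultraweakly continuous one-parameter semigroup on $\Al$ is exactly a completely positive Markov semigroup in the terminology of Section~\ref{section:qsa}, which completes the argument. There is no genuine obstacle here: the proof is a direct specialisation of Theorem~\ref{Feynman-Kac theorem}, and the only point that calls for any care is the use of $h = h^*$ in the relation $k^* + k + l^* l = 0$, which simultaneously delivers contractivity and unitality.
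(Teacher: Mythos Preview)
Your proof is correct and is exactly the approach the paper intends: the corollary is stated without proof precisely because it is the direct specialisation of Theorem~\ref{Feynman-Kac theorem} to $l_1=l_2=l$, $k_1=k_2=k=ih-\tfrac12 l^*l$, with the identity $k^*+k+l^*l=0$ (from $h=h^*$) delivering conditions (a) and (b) simultaneously and (c) holding by construction.
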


 \emph{ACKNOWLEDGEMENTS.}
 We are grateful to an anonymous referee for prompting us to provide
 (in Section~\ref{section:character})
 a characterisation of perturbation processes beyond the case of
 Markov-regular free flows,
 and to Kalyan Sinha for his interest at an early stage of the work.
 Support from the UK-India Education and Research Initiative (UKIERI)
 is also gratefully acknowledged.


\end{document}